\newtheorem{thm}{Theorem}[section]
\newtheorem*{thm*}{Theorem}
\newtheorem*{conj*}{Conjecture}
\newtheorem{cor}[thm]{Corollary}
\newtheorem{lem}[thm]{Lemma}
\newtheorem{prop}[thm]{Proposition}
\theoremstyle{remark}
\theoremstyle{definition}
\newtheorem{defn}[thm]{Definition}
\newtheorem{prob}[thm]{Problem}
\newcounter{claim}[thm]
\title{Most Generalized Petersen graphs of girth 8 have cop number 4}
\author{Joy Morris}
\author{Tigana Runte}
\author{Adrian Skelton}
\thanks{This work was supported by the Natural Science and Engineering Research Council of Canada (grant RGPIN-2017-04905), by the University of Lethbridge Chinook Research Awards program, and by the NSERC Undergraduate Research Awards program.}
\address{Department of Mathematics and Computer Science\\
University of Lethbridge\\
Lethbridge, AB T1K 3M4\\
Canada}
\email{joy.morris@uleth.ca}
\email{tigana.runte@uleth.ca}
\email{adrian.skelton@uleth.ca}
\subjclass[2020]{05C57, 91A46}
\begin{document}

\begin{abstract}
A generalized Petersen graph $GP(n,k)$ is a regular cubic graph on $2n$ vertices (the parameter $k$ is used to define some of the edges). It was previously shown (Ball et al., 2015) that the cop number of $GP(n,k)$ is at most $4$, for all permissible values of $n$ and $k$.
In this paper we prove that the cop number of ``most" generalized Petersen graphs is exactly $4$. More precisely, we show that unless $n$ and $k$ fall into certain specified categories, then the cop number of $GP(n,k)$ is $4$. The graphs to which our result applies all have girth $8$.

In fact, our argument is slightly more general: we show that in any cubic graph of girth at least $8$, unless there exist two cycles of length $8$ whose intersection is a path of length $2$, then the cop number of the graph is at least $4$. Even more generally, in a graph of girth at least $9$ and minimum valency $\delta$, the cop number is at least $\delta+1$.
\end{abstract}

\maketitle

\section{Introduction}

A robber is on the loose and you need to determine how many cops are needed to ensure his capture (a question of fiscal restraint!). Cops and robbers is a pursuit-evasion game played on graphs \cite{nowa, quilliot} by two players on a simple graph $G$. The game starts with the cop player placing up to her allowed number of cops on her choice of vertices in $G$, followed by the robber placing his single (in the original version of the game, which is what we will be considering) pawn on his choice of vertex in $G$. Both players are fully aware of the structure of the graph and the positions of all the pawns, and take turns with the cop player having the first play. On their turn, the players may elect to have their pawn(s) remain static at the current position by passing play, or to move to one of its neighbouring vertices. The cop player may move all of her pawns in a single turn. Cops may also congregate at the same vertex (imagine, if you will, a corner donut shop), but this may not have any advantage during the course of the game. The game ends when a cop occupies the same vertex as the robber -- the robber is thus captured, and the cop player wins. If the robber has a strategy that will enable him to evade the cops forever, then he wins. The question to consider is this: how many cops must be engaged in play to guarantee that the cops can win?

\begin{defn}
The \emph{cop number} of a graph $G$, denoted $c(G)$, is the smallest positive integer $k$ such that k cops suffice to capture the robber in a finite number of moves played on the graph $G$.
\end{defn}

An interesting family of graphs to play the game on, and the family of graphs that we will be discussing throughout this paper, is the generalized Petersen graph family.

\begin{defn}
A \emph{generalized Petersen graph} $GP(n,k)$ is a graph with vertex set $$\{a_{0}, a_{1},\ldots,a_{n-1}, b_{0}, b_{1},\ldots, b_{n-1}\}$$ and edge set $$\{a_{i}a_{i+1}, a_{i}b_{i}, b_{i}b_{i+k} : 0 \le i \le n-1\}$$ where subscripts are read modulo $n$, $n \ge 5$, and $k< n/2$.
\end{defn}

The assumption that $k \neq n/2$ ensures that generalized Petersen graphs are always cubic. The fact that the same graph would be produced as $GP(n,k)$ and $GP(n,n-k)$ allows the assumption that $k<n/2$. Note that the Petersen graph is $GP(5,2)$.

\begin{defn}
The \emph{girth} of a graph is the length of a shortest cycle contained in the graph.
\end{defn}

In this paper we will be focusing on generalized Peterson graphs with a girth of 8; we will explore the relationship between generalized Peterson graphs of girth 8 and their cop numbers.

For the purpose of this paper, we will separate the vertex set into two separate sets, $A$ and $B$, which are the vertices labelled $a_i$ in our definition (generally drawn as the outer cycle) and the vertices labelled $b_i$ (generally drawn as the inner cycle), respectively.

In \cite{ball}, Ball et al.~showed that for any generalized Petersen graph $GP(n,k)$, the cop number is at most 4. The goal of this paper is to show that many generalized Petersen graphs of girth 8 have a cop number of exactly $4$, and classify possible exceptions. 

Since the parameters of generalized Petersen graphs of girth $8$ are understood (see Table~\ref{tab:girth}), our concluding result will be a reasonably short list of families of parameters that includes all generalized Petersen graphs (up to isomorphism) that do \emph{not} have a cop number of $4$. This list includes all generalized Petersen graphs that do not have a girth of $8$. This result is presented in Corollary~\ref{cor:final}.

\section{Previous research}

In this study, we have applied a classic version of cops and robber, whereby moves are limited and the location of the players are visible to their opponents with perfect information (as if there were two helicopters perched over the neighbourhood reporting relative locations to the escaped robber and the pursuing cops). Cops and robbers, however, is a game with many different versions all with different rules. Some variations include games played without perfect information \cite{isler}, ``lazy" cops and robbers \cite {bonato}, or traps \cite{clarke}.

In addition to varying rules of play, there are also varying types of graphs upon which the game can be played. We focus on generalized Peterson graphs in this paper, although our main result applies to any cubic graph of girth $8$. In \cite{ball} the analysis was extended to I graphs. They proved that the cop number of a connected I-graph $I(n,k,j)$ is less than or equal to 5.

For generalized Petersen graphs, most research on the cop number has focussed on its relation to the parameters $n$ and $k$, rather than focussing on the girth of the graph.  However, the girth of a generalized Petersen graph is straightforward to determine from $n$ and $k$. Most generalized Petersen graphs have girth 8, and therefore fall within the scope of this paper, but there are infinite families whose girth is $g$ for each $3 \le g \le 7$. We summarize the information about this relationship in Table~\ref{tab:girth}. In understanding this table, it is important to be aware of the isomorphism classes of generalized Petersen graphs.

\begin{prop}[Steimle and Staton, \cite{steimle}]\label{prop:iso}
Two generalized Petersen graphs $GP(n,k)$ and $GP(n,\ell)$ are isomorphic if and only if $k=\ell$ or $k\ell\equiv \pm 1\pmod{n}$. 
\end{prop}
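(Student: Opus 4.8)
The plan is to prove the two directions of the biconditional separately: sufficiency by exhibiting explicit isomorphisms, and necessity by a rigidity argument showing that the decomposition of the edge set into the outer $n$-cycle, the inner circulant, and the matching of spokes is essentially canonical. For sufficiency, I would first record the trivial sign identification: the inner edge set $\{b_i b_{i+k}\}$ is literally unchanged when $k$ is replaced by $n-k$, since the substitution $j=i-k$ turns $\{b_i b_{i-k}\}$ into $\{b_j b_{j+k}\}$, so $GP(n,k)$ and $GP(n,-k)$ are the \emph{same} labelled graph. The substantive map interchanges the roles of the two cycles when $\gcd(k,n)=1$: define a relabelling of $GP(n,k)$ by $A_j:=b_{jk}$ and $B_j:=a_{jk}$ (indices mod $n$). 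Then the spokes $A_jB_j=a_{jk}b_{jk}$ are genuine spokes, the edges $A_jA_{j+1}=b_{jk}b_{(j+1)k}$ are exactly the inner step-$k$ edges, and the original outer edges $a_m a_{m+1}$ become $B_jB_{j+k^{-1}}$. Reading off the new parameters gives $GP(n,k)\cong GP(n,k^{-1})$; since $k\ell\equiv 1$ already forces $\gcd(k,n)=1$, this settles the case $k\ell\equiv 1$, and composing with the $k\mapsto -k$ identification handles $k\ell\equiv -1$.

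The harder direction is necessity: I must show that \emph{no other} isomorphisms occur, i.e.\ that the inner--outer--spoke structure can be recovered, up to swapping the two cycles and reflecting, from the abstract graph alone. The strategy here is a short-cycle analysis. For the generic parameters (those giving girth $8$, which are the graphs of interest in this paper) I would classify the $8$-cycles and, for each edge, record the number of shortest cycles through it together with how those cycles interlock; this combinatorial profile distinguishes spoke edges from cycle edges and recovers the cyclic order of the outer cycle up to rotation and reflection. An isomorphism $\varphi\colon GP(n,k)\to GP(n,\ell)$ must carry the canonical structure of the domain to that of the codomain, and matching the two cyclic orders along the spokes forces $\ell\equiv k$ or $\ell\equiv\pm k^{-1}\pmod n$, which is exactly the stated congruences.

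The main obstacle is the rigidity itself. For small $n$, and for a short explicit list of sporadic pairs $(n,k)$ — the Petersen graph $GP(5,2)$ being the prototype — the graph carries extra symmetry, the short-cycle profile fails to pin down the decomposition, and the clean counting argument breaks. I would therefore isolate these finitely many exceptional parameters and dispose of them by direct inspection, checking case by case that the only isomorphisms among them are still accounted for by the stated congruences, while reserving the counting argument for the cofinite generic range where the girth is $8$ and the ball around each vertex is tree-like enough to make every edge's shortest-cycle profile unambiguous.
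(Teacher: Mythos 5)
First, note that the paper itself offers no proof of this proposition: it is quoted as a known result of Steimle and Staton \cite{steimle}, so your attempt has to stand on its own rather than be measured against an argument in the text. Your sufficiency half does stand: the observation that $GP(n,k)$ and $GP(n,n-k)$ are the same labelled graph, together with the relabelling $A_j=b_{jk}$, $B_j=a_{jk}$ (a bijection precisely because $k\ell\equiv\pm1\pmod{n}$ forces $\gcd(k,n)=1$), correctly exhibits $GP(n,k)\cong GP(n,k^{-1})$, and composing with the sign identification covers $k\ell\equiv-1\pmod{n}$. This is the standard construction and it is essentially complete.

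The necessity half, however, has a genuine gap, located exactly where you appeal to finiteness. You propose to run the shortest-cycle rigidity argument only ``in the cofinite generic range where the girth is $8$'' and to dismiss everything else as ``finitely many exceptional parameters'' to be settled by direct inspection. But the exceptional set is not finite. By Table~\ref{tab:girth}, the parameters giving girth less than $8$ form infinitely many \emph{infinite} families ($k\in\{1,2,3,4\}$; $n\in\{3k,4k,5k,6k,7k\}$; $n=5k/2$, $7k/2$, $7k/3$; $n=2k+2$, $2k+3$; $n=3k\pm2$), and since isomorphic graphs have equal girth, each of these classes requires its own rigidity argument built on its own shortest cycles (triangles, quadrilaterals, \dots, $7$-cycles), none of which your sketch supplies. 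Worse, even within girth $8$ your premise that every edge has an unambiguous shortest-cycle profile fails on further infinite families: precisely those the paper isolates in Section~\ref{sec:numco} ($k=5$, $n=2k+4$, $n=3k\pm3$, $n=4k\pm2$, and $n=8k$), whose graphs carry $8$-cycles beyond the four canonical ones through each vertex, so the counting that distinguishes spokes from cycle edges changes there as well. The work you defer to ``inspection'' is therefore an infinite case analysis; supplying a uniform argument across all these families is the actual content of the theorem and the bulk of the proof in \cite{steimle}. As written, the necessity direction does not go through.
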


In Table~\ref{tab:girth}, only the smallest value of $k$ in any given isomorphism class is listed. For example, when $n=2k+1$ the graph $GP(n,k)$ has girth $5$, but this relation is not included in the table because the corresponding graph is isomorphic to $GP(n,2)$. (Taking $\ell=2$ and $n=2k+1$ gives $k\ell\equiv -1 \pmod{n}$.)
 
\begin{table}[h!]
  \begin{center}
    \caption{Girth table \cite[Theorem 5]{boben}.}
    \label{tab:girth}
    \begin{tabular}{l|l|l|l|l|l}
      \textbf{Girth 3} & \textbf{Girth 4} & \textbf{Girth 5} & \textbf{Girth 6} & \textbf{Girth 7} & \textbf{Girth 8}\\

      \hline
	
      $n=3k$ & $n=4k$ & $n=5k$ & $n=6k$ & $n=7k$ & otherwise \\
      & $k=1$ & $k=2$ & $k=3$ & $k=4$ \\
      & & $n=5k/2$ & $n=2k+2$ & $n=7k/2$  \\
      & & & & $n=7k/3$ \\
	  & & & & $n=2k+3$ \\
	  & & & & $n=3k\pm2$\\
    \end{tabular}
  \end{center}
\end{table}

For some families that do not have girth 8, the cop number is already known, or tighter bounds have been found. For example, when $k=1$, $GP(n,1)$ has a girth of 4, and its cop number is known to be 2 \cite{ball}. Likewise, when $k=3$ the girth is $6$ (except for some small values of $n$) and the cop number is known to be at most $3$ \cite{ball}.  

It is well-known that a graph has a cop number of $1$ if and only if it has a \emph{pitfall}, also known as a \emph{corner} (see for example \cite[pp.~30--33]{anthony}). (A \emph{pitfall} is a vertex whose neighbourhood is dominated by a neighbouring vertex.) No generalized Petersen graph contains a pitfall, so the cop number of any generalized Petersen graph is at least $2$. In order for a generalized Petersen graph to have a cop number of $2$, it must have girth $3$ or $4$, since for a cubic graph having girth $5$ or more yet cop number $2$ would contravene the bound of Frankl \cite{frankl}. For the sake of completeness, we will give a brief explicit proof of this in our situation in Section~\ref{sec:2-cops}. This means that when it is proved in \cite{ball} that the cop number is at most $3$, it will actually be $3$ unless the girth is $4$ or less. So in fact, when $k=3$ the cop number of $GP(n,k)$ is $3$ unless $n \in \{9,12\}$.

Although they do not mention this, the argument given in \cite{ball} for $k=3$ also serves to show that the cop number is at most $3$ when $k=2$ (in which case the girth is $5$ except for some small values of $n$). We present this result here.

\begin{prop}\label{prop:k=2}
The cop number of $GP(n,2)$ is at most $3$. In fact, as long as the girth of this graph is $5$ (that is, unless $n \in \{6,8\}$), the cop number of $GP(n,2)$ is exactly $3$.
\end{prop}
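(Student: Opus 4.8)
The plan is to prove the two assertions separately: the upper bound $c(GP(n,2)) \le 3$ by exhibiting an explicit three-cop strategy, and the matching lower bound $c(GP(n,2)) \ge 3$ (when the girth is $5$) by invoking the same girth argument we present in Section~\ref{sec:2-cops}.

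For the upper bound I would adapt the confinement strategy that Ball et al.\ use for $k=3$ in \cite{ball}, exploiting the fact that with $k=2$ the inner vertices decompose in a very controlled way. Write $\pi$ for the projection $a_i, b_i \mapsto i$ onto the outer cycle $C_n$. The first cop $C_1$ plays a \emph{shadow}: she moves so as to remain at $a_{\pi(R)}$, where $R$ denotes the robber's current position, thereby matching the robber's index on the outer cycle. The only way the robber can change his index by more than $1$ in a single move is to traverse an inner edge $b_j b_{j+2}$, shifting his index by $\pm 2$; this is precisely what the outer shadow cannot follow on its own. The role of the remaining two cops, $C_2$ and $C_3$, is to neutralise these inner shortcuts. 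Here the parity of $n$ enters: when $n$ is odd the inner vertices form a single $n$-cycle, and when $n$ is even they form two disjoint $(n/2)$-cycles, on the even-indexed and odd-indexed $b_i$ respectively. In either case I would station $C_2$ (and $C_3$ where needed) on the inner cycle(s) so that, together with the outer shadow, the robber's reachable set of indices is squeezed into a shrinking arc; once the robber is pinned against a cop on his own cycle with the shadow blocking escape along the spoke, he is captured.

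For the lower bound, recall that $GP(n,2)$ is a connected cubic graph, and that its girth is $5$ precisely when $n \notin \{6,8\}$ (the excluded values give girth $3$ and $4$ respectively, since $6 = 3k$ and $8 = 4k$ with $k=2$). By the bound of Frankl \cite{frankl} — or equivalently the explicit argument we give in Section~\ref{sec:2-cops} — a cubic graph of girth at least $5$ cannot have cop number $2$, and since no generalized Petersen graph contains a pitfall, its cop number is at least $2$; hence it is at least $3$. Combined with the upper bound, this forces $c(GP(n,2)) = 3$ whenever $n \notin \{6,8\}$.

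The main obstacle is the verification of the upper-bound strategy. I must check that the shadowing cop can in fact always restore and maintain the position $a_{\pi(R)}$ against an adversarial robber who alternates between the spokes, the outer cycle, and the inner $\pm 2$ shortcuts, and that the two inner cops can genuinely force the robber's reachable arc to contract rather than merely chase him around a cycle indefinitely. This calls for a careful case analysis on the parity of $n$ (a single inner cycle versus two) and on the robber's position relative to the cops, together with attention to the cyclic wrap-around and to the small cases where an arc is too short for the intended manoeuvre to go through.
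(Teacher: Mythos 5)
Your lower bound is fine and is exactly the paper's: Frankl's bound \cite{frankl} (equivalently, the trapping argument of Section~\ref{sec:2-cops}) rules out cop number $2$ for a cubic graph of girth $5$, no generalized Petersen graph has a pitfall, and the excluded values $n\in\{6,8\}$ are correctly identified as the girth-$3$ and girth-$4$ cases. The genuine gap is in your upper bound, and it is not merely a deferred verification: the central claim of the shadow strategy is unjustified and, as stated, false. The projection $\pi$ is not a graph homomorphism onto the outer cycle, because an inner edge $b_jb_{j+2}$ projects to a jump of $2$, which is not an edge of $C_n$; hence the outer cycle is not a retract and the standard ``cop guards the image'' argument does not apply. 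Concretely, if the robber circulates on the inner cycle (for $n$ odd this is a single $n$-cycle), his index advances by $2$ per move while $C_1$'s advances by at most $1$, so the shadow falls arbitrarily far behind and cannot be ``restored.'' The proposed repair --- that $C_2$ and $C_3$ ``neutralise the shortcuts'' and squeeze the robber into a shrinking arc --- is not a mechanism: a single cop on a cycle cannot pin a robber (the cop number of a cycle is $2$), so pinning the robber on his inner cycle needs two blockers there, which you cannot spare once $C_1$ is committed to the outer cycle and the robber can also escape along spokes. You flag exactly this as ``the main obstacle,'' which means the hard part of the proof is the part not done.

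The paper's proof avoids all of this by following Ball et al.\ \cite{ball} for $k=3$: lift the game to the infinite cover $GP(\infty,2)$, where one cop guards the isometric path on $\{b_0,a_0,a_1,b_1\}$ (its removal disconnects the infinite strip), and the other two cops play the lifted strategy of pushing the robber to the right, as in Corollary~2.2 of \cite{ball}. The point of that route is that the connectivity/separation argument replaces any need to track the robber's index, which is precisely what your projection-based strategy cannot do. If you want to keep a shadow-style argument, you would need to replace $\pi$ by a genuine retraction or an isometric subgraph that one cop can provably guard, and then show that the robber's territory strictly decreases --- in other words, you would be led back to something like the paper's argument.
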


\begin{proof}
This is very similar to the proof of \cite[Theorem 5.1]{ball}. One cop can guard the isometric path in $GP(\infty,2)$ having vertex set $\{b_0,a_0,a_1,b_1\}$. This disconnects the remainder of $GP(\infty,2)$. While one cop plays the lifted strategy of guarding this tree, the other two cops play the lifted strategy of pushing the robber to the right, as in \cite[Corollary 2.2]{ball}. This shows that three cops suffice, so the cop number is at most $3$. The lower bound of Frankl \cite{frankl} suffices to show that the cop number is at least $3$ when the girth is $5$.
\end{proof}

In Table~\ref{tab:table2}, we list all generalized Petersen graphs with $n \le 40$ whose cop number achieves the upper bound of $4$ given in \cite{ball}. (Our table is based on the table in \cite{ball}.) There are $60$ such graphs, $57$ of which have girth $8$. Note that of these $60$ graphs, only $GP(28, 8)$, $GP(35, 10)$, and $GP(35, 15)$ have girth $7$. We remark that two of the sets of parameters that appear in Table~\ref{tab:table2} did not appear in \cite{ball}; however, the source code created and used by the authors of \cite{ball} is freely available \cite{bell} and we used it to verify that these two sets of parameters do indeed produce graphs whose cop number is $4$. The parameters missing from their paper are $n=25$ with $k=7$ and $n=40$ with $k=17$. A list of the cop number of every generalized Petersen graph with $n \le 30$ is also given in \cite{burgess}, and their list also shows the cop number of $GP(25,7)$ being $4$.

\begin{table}[h!]
  \begin{center}
    \caption{$GP(n,k)$ with cop number 4, for $n \le 40$ \cite{ball} \cite{burgess}}
    \label{tab:table2}
    \begin{tabular}{l|l|l||l|l|l}
      \mathversion{bold}$n$ & \mathversion{bold}$k$ & \textbf{girth} & \mathversion{bold}$n$ & \mathversion{bold}$k$ & \textbf{girth}\\
      
      \hline
      25 & 7 & 8 & 34 & 6, 10, 13, 14&8, 8, 8, 8\\
      26 & 10 & 8 & 35 & 6, 8, 10, 13, 15&8, 8, 7, 8, 7\\
      27 & 6 & 8       & 36 & 8, 10, 14, 15&8, 8, 8, 8\\
      28 & 6, 8 &8, 7& 37 & 6, 7, 8, 10, 11, 14, 16&8, 8, 8, 8, 8, 8, 8\\
      29 & 8, 11, 12 &8, 8, 8 & 38 & 6, 7, 8, 11, 14, 16&8, 8, 8, 8, 8, 8\\
      31 &  7, 9, 12, 13 &8, 8, 8, 8 & 39 & 6, 7, 9, 11, 15, 16, 17&8, 8, 8, 8, 8, 8, 8\\
      32 & 6, 7, 9, 12  &8, 8, 8, 8 & 40 & 6, 7, 9, 11, 12, 15, 17&8, 8, 8, 8, 8, 8, 8\\
      33 & 6, 7, 9, 14 &8, 8, 8, 8 &&&
    \end{tabular}
  \end{center}
\end{table}

We have presented here only a very limited description of some of the extensive research relating to the game of cops and robbers, and to generalized Peterson graphs.

\section{Cop number $2$}\label{sec:2-cops}

Before we begin stating our results, we must define some terminology:

\begin{defn}
The cops have \emph{trapped} the robber if for each edge incident to the robber there is a cop at distance no more than 2 away from the robber along a path that uses that edge.
\end{defn}

A visual representation of this situation is shown in Figure~\ref{fig:trapped}, using the distance-$2$ subgraph of the robber's vertex. (In most of this paper, since we are assuming that the girth is 8, this subgraph is a tree.) As illustrated by the figure, if one cop is in each of the three branches, occupying any one of the three vertices within distance $2$ of the robber, the robber will lose in at most two moves. (We are assuming that the cops play optimally. It will take two moves only if all cops all start at distance 2 and the robber passes).

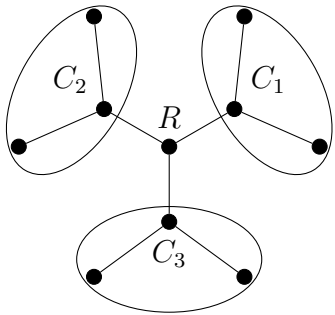
\begin{figure}
\caption{These cops have trapped the robber.}\label{fig:trapped}
\begin{center}
\begin{tikzpicture}
\node[label=above:$R$] (P0) at (0:0cm) [circle,draw,fill=black!100, inner sep=2pt, minimum width=5pt] {};
\node[label=above right:$C_1$] (P1) at (30:1cm) [circle,draw,fill=black!100, inner sep=2pt, minimum width=5pt] {};
\node[label=above left:$C_2$] (P2) at (150:1cm) [circle,draw,fill=black!100, inner sep=2pt, minimum width=5pt] {};
\node[label=below:$C_3$] (P3) at (270:1cm) [circle,draw,fill=black!100, inner sep=2pt, minimum width=5pt] {};
\node (P11) at (0:2cm) [circle,draw,fill=black!100, inner sep=2pt, minimum width=5pt] {};
\node (P12) at (60:2cm) [circle,draw,fill=black!100, inner sep=2pt, minimum width=5pt] {};
\node (P21) at (120:2cm) [circle,draw,fill=black!100, inner sep=2pt, minimum width=5pt] {};
\node (P22) at (180:2cm) [circle,draw,fill=black!100, inner sep=2pt, minimum width=5pt] {};
\node (P31) at (240:2cm) [circle,draw,fill=black!100, inner sep=2pt, minimum width=5pt] {};
\node (P32) at (300:2cm) [circle,draw,fill=black!100, inner sep=2pt, minimum width=5pt] {};
\draw (P11)--(P1)--(P0)--(P2)--(P21);
\draw (P1)--(P12);
\draw (P2)--(P22);
\draw (P0)--(P3)--(P31);
\draw (P3)--(P32);
\draw [rotate=30] (0:1.5cm) ellipse (20pt and 35pt);
\draw [rotate=150] (0:1.5cm) ellipse (20pt and 35pt);
\draw [rotate=270] (0:1.5cm) ellipse (20pt and 35pt);
\end{tikzpicture}
\end{center}
\end{figure}

So if a robber is not trapped, he has a legal move (other than passing) that does not result in him being caught on the cops' turn. Thus, if there is no configuration in which the cops can trap the robber, then the cops cannot win, since the robber can on each turn take the legal move that does not result in his being caught. 

In order for two cops to be able to trap a robber, then, some of the vertices in Figure~\ref{fig:trapped} must be identified. This means that the girth must be $4$ or less. Putting this (and our previous observation that generalized Petersen graphs do not have pitfalls) together with the information from Table~\ref{tab:girth}, we obtain the following result.

\begin{prop}
A generalized Petersen graph cannot have cop number $1$. A generalized Petersen graph can have cop number $2$ only if $k=1$, $n=3k$, or $n=4k$.
\end{prop}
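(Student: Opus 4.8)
The plan is to prove the two assertions separately, in both cases reducing to facts already assembled in the excerpt. For the first assertion I would recall the standard characterization that $c(G)=1$ exactly when $G$ has a pitfall (corner), cited in the introduction, and then verify directly that no generalized Petersen graph has one. Since $GP(n,k)$ is cubic, a pitfall at a vertex $v$ dominated by a neighbour $u$ would force $N(v)\setminus\{u\}\subseteq N(u)$; as $u$ is also cubic this means $v$ and $u$ share both of their remaining neighbours. Running through the two vertex types $v=a_i$ and $v=b_i$ and each choice of $u\in N(v)$, one checks that the required coincidences (for example $a_{i-1}=a_{i+2}$, or an equality between an outer vertex and an inner vertex) never occur for $n\ge 5$. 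Hence $GP(n,k)$ has no pitfall and $c(GP(n,k))\ge 2$, as already noted in the introduction.

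For the second assertion I would formalize the trapping discussion preceding the statement. The key observation is that if the cops cannot force a trapped configuration then they cannot win, since an untrapped robber always has a safe non-passing move. So it suffices to show that two cops can never trap the robber when the girth is at least $5$; granting this, $c(GP(n,k))=2$ can occur only when the girth is at most $4$. Fix the robber at a vertex $v$ with neighbours $u_1,u_2,u_3$ and incident edges $e_j=vu_j$. To cover $e_j$ a cop must lie within distance $2$ of $v$ along a path beginning with $e_j$, that is, in the branch $B_j=\{u_j\}\cup(N(u_j)\setminus\{v\})$ pictured in Figure~\ref{fig:trapped}. The crux is that when the girth is at least $5$ these three branches are pairwise disjoint: any vertex lying in two branches would either coincide with, or be adjacent to, two of the $u_j$, producing a triangle or a $4$-cycle through $v$. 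Consequently each cop within distance $2$ of $v$ covers exactly one of the three edges, so two cops cover at most two edges and the robber can always step along the uncovered one. Thus two cops cannot trap, and cop number $2$ is impossible once the girth is at least $5$.

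Finally I would translate the girth condition into parameters. By a short-cycle analysis (equivalently, by Table~\ref{tab:girth}), a triangle in $GP(n,k)$ can only be an inner triangle $b_ib_{i+k}b_{i+2k}$, which closes up precisely when $n=3k$; and apart from the inner $4$-cycle, which requires $n=4k$, every $4$-cycle must use a rung $b_ib_{i+1}$ and hence forces $k=1$. Therefore $GP(n,k)$ has girth at most $4$ if and only if $k=1$, $n=3k$, or $n=4k$, and combining this with the previous paragraph yields the stated necessary condition for cop number $2$.

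I expect the main obstacle to be the disjointness-of-branches step in the second paragraph: this is exactly the point where the girth threshold enters, and it must be argued carefully that two of the robber's incident edges can be covered simultaneously by a single nearby cop only when a cycle of length at most $4$ passes through the robber's vertex. The pitfall check and the parameter translation are then routine.
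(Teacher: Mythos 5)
Your proposal is correct and follows essentially the same route as the paper: the paper's argument (given in the discussion preceding the proposition) is exactly the pitfall characterization for cop number $1$, the observation that trapping with two cops would require identifying vertices in the distance-$2$ tree of Figure~\ref{fig:trapped} and hence girth at most $4$, and then the translation of girth at most $4$ into the parameter conditions via Table~\ref{tab:girth}. You simply make explicit the details the paper leaves implicit (the case-check that no pitfall exists, the pairwise disjointness of the three branches when the girth is at least $5$, and the short-cycle analysis), all of which are sound.
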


As previously mentioned, it has been observed that when $k=1$ the cop number is indeed $2$, and this is also true when $n=3k$ or $n=4k$ and $k \in \{2,3\}$. However, when $k=4$ and $n=3k$ or $n=4k$ the cop number is $3$. All of these results are found in \cite{ball}.

\section{Trapping the robber with three cops}\label{sec:trapped}

We can  extend the definition of a trapped robber to apply to the situation that occurs immediately before the robber is trapped:

\begin{defn}
The cops have \emph{$2$-trapped} the robber if the robber's best possible move (aside from passing) allows the cops to trap him with their move. This can be generalized to $n-$trapped, for $n>2$: the cops have $n$-trapped the robber if the robber's best possible move (aside from passing) allows the cops to $(n-1)$-trap him with their move. 
\end{defn}

We ignore passing as one of the robber's options in this definition only because it complicates descriptions without affecting the underlying situation. As in our definition of trapped, it may be possible for the robber (by passing) to remain $n$-trapped for an additional turn, before becoming $(n-1)$-trapped, but the robber is then
 forced to either move or be caught.

With these definitions in hand, we state our key lemma. This will be the essential ingredient that we use to prove that many generalized Petersen graphs of girth 8 have cop number $4$.

\begin{lem}\label{lem:main}
Suppose that we play cops and robbers on a cubic graph $G$ of girth at least $8$. Further suppose that we are playing with three cops.

The only configuration in which the cops have $2$-trapped the robber but have not trapped the robber can be described as follows (note that the graph must have girth $8$ for this to be possible):
\begin{itemize}
\item two of the cops are sitting at points antipodal to the robber on cycles of length $8$;
\item these two cycles have as their intersection a path of length two consisting of two of the three edges incident with the robber's vertex; and
\item the third cop is at distance $1$ or $2$ from the robber, on a path that includes the third edge incident with the robber's vertex.
\end{itemize}
\end{lem}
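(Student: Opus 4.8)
The plan is to work out exactly which configurations let three cops $2$-trap a robber without already trapping him, and to show the only such configuration is the one described. Recall that a robber at vertex $R$ is trapped precisely when all three branches at $R$ (the three edges incident with $R$, together with the vertices at distance $2$ through each) contain a cop within distance $2$. Being $2$-trapped but not trapped means that the robber is not yet trapped, but every non-passing move he makes leads to a vertex at which he \emph{is} trapped. So first I would set up the local picture: since the girth is at least $8$, the ball of radius $3$ about $R$ is a tree (the $3$-valent tree truncated at depth $3$), and I can name the three neighbours of $R$ as $x_1,x_2,x_3$ and work with the induced tree structure.

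Next I would analyze the robber's three possible moves. If the robber moves along the edge to $x_j$, then at the new vertex $x_j$ he must be trapped, meaning each of the three branches at $x_j$ must have a cop within distance $2$ of $x_j$. One of those branches at $x_j$ points back toward $R$, and the other two point to the two ``new'' neighbours of $x_j$. The key observation is that for each of the three moves this forces a cop to be positioned so as to control the two forward branches at $x_j$. Since we only have three cops and there are three moves to block, I would count carefully: a single cop can simultaneously lie within distance $2$ of several of the vertices $x_j$ only because of the tree structure near $R$, and I would use the girth-$8$ hypothesis to pin down exactly when one cop can do double duty. This counting/covering argument — determining which cop positions can block which of the three escape moves, and showing that blocking all three with only three cops forces two of the cops to occupy the antipodal points of two length-$8$ cycles meeting in a path of length $2$ at $R$ — is the heart of the lemma and the step I expect to be the main obstacle.

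The covering argument should proceed by cases according to how many cops lie in each branch at $R$ in the current (not-yet-trapped) configuration. Because the robber is \emph{not} trapped, at least one branch at $R$ has no cop within distance $2$; call the missing edge the ``third edge.'' The robber's only safe non-passing move is along that third edge (any move along an already-guarded branch would let the cops capture immediately, not merely trap). But the definition of $2$-trapped requires that even this move leads to a trapped position, and moreover that moves along the \emph{other} edges are no better. I would show that to trap the robber after he moves along each of the two guarded edges, the two cops guarding those edges must each sit at distance exactly $4$ from $R$ through that edge in a way that closes up into a length-$8$ cycle — this is where antipodality on an $8$-cycle and the intersection-path-of-length-$2$ condition emerge, and where girth exactly $8$ (rather than larger) becomes necessary, since a larger girth would make the required cop too far to trap after the move. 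Finally I would place the third cop: it must guard the third edge to within distance $2$ to make the forced move along that edge lead to a trap, giving the stated ``distance $1$ or $2$ on a path including the third edge'' condition. I would close by verifying that this configuration indeed $2$-traps but does not trap, so no further configurations are possible.
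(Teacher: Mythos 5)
There is a genuine error at the heart of your covering argument: you have the configuration inverted. In the configuration the lemma describes, the two cops sitting antipodally on the $8$-cycles are at distance $4$ from the robber, so the two edges $e_2,e_3$ lying on those cycles have \emph{no} cop within distance $2$; the unique edge that \emph{is} guarded within distance $2$ is the third edge, $e_1$. Consequently the robber has \emph{two} safe non-passing moves (to $v_2$ or $v_3$), not one, and it is precisely because both of these moves must lead to trapped positions that two $8$-cycles (one closing up each pair of far branches) are forced. Your write-up asserts the opposite: you define the ``third edge'' as the one with no cop within distance $2$, yet then require the third cop to ``guard the third edge to within distance $2$''; and you require the two cops ``guarding'' the other edges (which, by your own definition of guarding --- the reason you say moving along those edges gives immediate capture --- means they are within distance $2$) to ``sit at distance exactly $4$ from $R$.'' Both requirements are self-contradictory as stated, and they cannot be repaired by simply swapping labels, because the scenario you actually analyze --- two cops within distance $2$ of the robber through two different edges --- can never be a $2$-trap at all: with girth at least $8$, the single remaining cop cannot lie on both branches beyond the remaining neighbour unless it sits on that neighbour itself, which would make the robber already trapped. (This is exactly the contradiction the paper derives in its Case 3.)

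The missing idea is the correct counting conclusion: in any $2$-trapped-but-not-trapped position, \emph{exactly one} cop is within distance $2$ of the robber, and the other two cops must between them cover all four branches beyond the two unguarded neighbours $v_2$ and $v_3$. Since girth $\ge 8$ prevents one cop from covering two branches of the \emph{same} neighbour (except by standing on that neighbour), each of these two cops must do its double duty across branches of \emph{different} neighbours, i.e.\ sit on a distance-$4$ vertex common to a branch of $v_2$ and a branch of $v_3$ --- and that identification is exactly what creates the two $8$-cycles whose intersection is the path $v_2$--$R$--$v_3$. Your proposal gestures at ``double duty'' via girth, which is the right instinct, but the concrete case analysis you describe would examine an impossible configuration and would never arrive at this structure. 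You would also need to dispose of the remaining cases explicitly (no cop within distance $2$ at all; some neighbour with no nearby cop and a cop-free branch), which the paper handles as its Cases 1 and 2 by exhibiting a robber move after which no cop can close the relevant branch.
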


Before proving this lemma, we provide some more commentary.

Figure~\ref{fig:2-trapped} shows a configuration in which the cops have $2$-trapped the robber. (Note that the dashed lines do not represent edges. Instead they signify that the two vertices on each end are actually the same vertex.) Cops $C_2$ and $C_3$ are each at the antipodal point of a cycle of length $8$ from the robber, and the intersection of these two cycles is the path of length $2$ consisting of the edges $e_2$ and $e_3$. (We do not rule out the possibility that some of the other vertices at distance $4$ from the robber are also in fact identified.)

As you can see, the robber in Figure~\ref{fig:2-trapped} is $2$-trapped. He cannot move to $v_1$ without being caught by cop $C_1$. His options if he moves to $v_2$ or $v_3$ are analogous to each other, so we will only discuss the case where he moves to $v_2$. The cop $C_1$ follows the robber, maintaining its current distance of $1$ or $2$. Cops $C_2$ and $C_3$ come down the middle branch of the tree toward $v_2$. Since they started at a distance of $4$ from the robber and both are moving toward him (and he also moved toward them), at the end of their turn they are each at distance $2$, one on the left branch from $v_2$ and the other on the right branch (while $C_1$ is on the third branch from $v_2$: the one that includes edge $e_2$), so the cops have trapped the robber. (The robber can only afford to pass without being caught if cop $C_1$ begins at distance $2$ from him. In this case, cop $C_1$ moves to $v_1$ and the other two cops remain in their current positions. On the robber's next turn, he must move to $v_2$ or $v_3$ to avoid being caught by cop $C_1$, and our previous analysis is unchanged.)

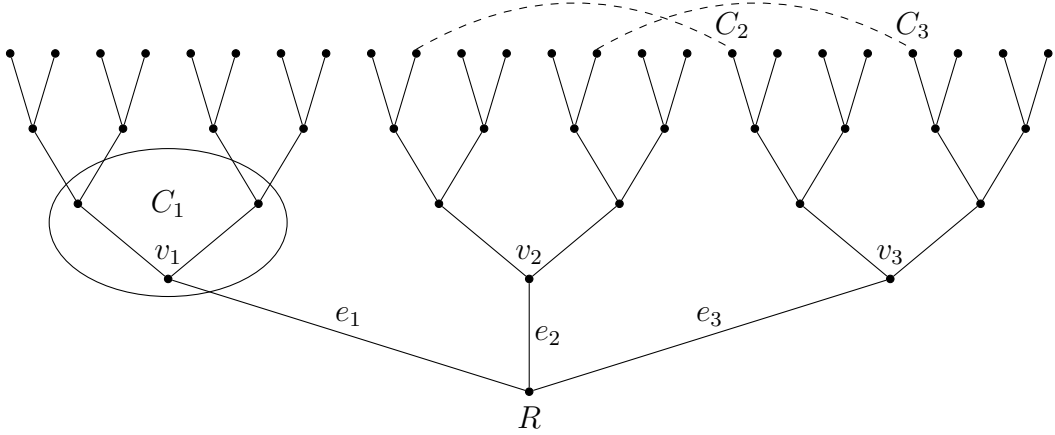
\begin{figure}\caption{These cops have $2$-trapped the robber. (Vertices joined by dashed edges are identified, and $C_1$ is on any one of the encircled vertices)}\label{fig:2-trapped}
	\begin{tikzpicture}
	\node at (-2.4,.5) {$e_1$};
	\node at (.25,.25) {$e_2$};
	\node at (2.4,.5) {$e_3$};
	\node at (-4.8,2) {$C_1$};
	\node[label=below:$R$] (P0) at (0,-.5) [circle,draw,fill=black!100, inner sep=1pt, minimum width=1pt] {};
	\node[label=above:$v_1$] (P1) at (-4.8,1) [circle,draw,fill=black!100, inner sep=1pt, minimum width=1pt] {};
	\node[label=above:$v_2$] (P2) at (0,1) [circle,draw,fill=black!100, inner sep=1pt, minimum width=1pt] {};
	\node[label=above:$v_3$] (P3) at (4.8,1) [circle,draw,fill=black!100, inner sep=1pt, minimum width=1pt] {};
	\node (P11) at (-6,2) [circle,draw,fill=black!100, inner sep=1pt, minimum width=1pt] {};
	\node (P12) at (-3.6,2) [circle,draw,fill=black!100, inner sep=1pt, minimum width=1pt] {};
	\node (P21) at (-1.2,2) [circle,draw,fill=black!100, inner sep=1pt, minimum width=1pt] {};
	\node (P22) at (1.2,2) [circle,draw,fill=black!100, inner sep=1pt, minimum width=1pt] {};
	\node (P31) at (3.6,2) [circle,draw,fill=black!100, inner sep=1pt, minimum width=1pt] {};
	\node (P32) at (6,2) [circle,draw,fill=black!100, inner sep=1pt, minimum width=1pt] {};
	\node (P111) at (-6.6,3) [circle,draw,fill=black!100, inner sep=1pt, minimum width=1pt] {};
	\node (P112) at (-5.4,3) [circle,draw,fill=black!100, inner sep=1pt, minimum width=1pt] {};
	\node (P121) at (-4.2,3) [circle,draw,fill=black!100, inner sep=1pt, minimum width=1pt] {};
	\node (P122) at (-3,3) [circle,draw,fill=black!100, inner sep=1pt, minimum width=1pt] {};
	\node (P211) at (-1.8,3) [circle,draw,fill=black!100, inner sep=1pt, minimum width=1pt] {};
	\node (P212) at (-.6,3) [circle,draw,fill=black!100, inner sep=1pt, minimum width=1pt] {};
	\node (P221) at (0.6,3) [circle,draw,fill=black!100, inner sep=1pt, minimum width=1pt] {};
	\node (P222) at (1.8,3) [circle,draw,fill=black!100, inner sep=1pt, minimum width=1pt] {};
	\node (P311) at (3,3) [circle,draw,fill=black!100, inner sep=1pt, minimum width=1pt] {};
	\node (P312) at (4.2,3) [circle,draw,fill=black!100, inner sep=1pt, minimum width=1pt] {};
	\node (P321) at (5.4,3) [circle,draw,fill=black!100, inner sep=1pt, minimum width=1pt] {};
	\node (P322) at (6.6,3) [circle,draw,fill=black!100, inner sep=1pt, minimum width=1pt] {};
	\node (P1111) at (-6.9,4) [circle,draw,fill=black!100, inner sep=1pt, minimum width=1pt] {};
	\node (P1112) at (-6.3,4) [circle,draw,fill=black!100, inner sep=1pt, minimum width=1pt] {};
	\node (P1121) at (-5.7,4) [circle,draw,fill=black!100, inner sep=1pt, minimum width=1pt] {};
	\node (P1122) at (-5.1,4) [circle,draw,fill=black!100, inner sep=1pt, minimum width=1pt] {};
	\node (P1211) at (-4.5,4) [circle,draw,fill=black!100, inner sep=1pt, minimum width=1pt] {};
	\node (P1212) at (-3.9,4) [circle,draw,fill=black!100, inner sep=1pt, minimum width=1pt] {};
	\node (P1221) at (-3.3,4) [circle,draw,fill=black!100, inner sep=1pt, minimum width=1pt] {};
	\node (P1222) at (-2.7,4) [circle,draw,fill=black!100, inner sep=1pt, minimum width=1pt] {};
	\node (P2111) at (-2.1,4) [circle,draw,fill=black!100, inner sep=1pt, minimum width=1pt] {};
	\node (P2112) at (-1.5,4) [circle,draw,fill=black!100, inner sep=1pt, minimum width=1pt] {};
	\node (P2121) at (-.9,4) [circle,draw,fill=black!100, inner sep=1pt, minimum width=1pt] {};
	\node (P2122) at (-.3,4) [circle,draw,fill=black!100, inner sep=1pt, minimum width=1pt] {};
	\node (P2211) at (.3,4) [circle,draw,fill=black!100, inner sep=1pt, minimum width=1pt] {};
	\node (P2212) at (.9,4) [circle,draw,fill=black!100, inner sep=1pt, minimum width=1pt] {};
	\node (P2221) at (1.5,4) [circle,draw,fill=black!100, inner sep=1pt, minimum width=1pt] {};
	\node (P2222) at (2.1,4) [circle,draw,fill=black!100, inner sep=1pt, minimum width=1pt] {};
	\node[label=above:$C_2$] (P3111) at (2.7,4) [circle,draw,fill=black!100, inner sep=1pt, minimum width=1pt] {};
	\node (P3112) at (3.3,4) [circle,draw,fill=black!100, inner sep=1pt, minimum width=1pt] {};
	\node (P3121) at (3.9,4) [circle,draw,fill=black!100, inner sep=1pt, minimum width=1pt] {};
	\node (P3122) at (4.5,4) [circle,draw,fill=black!100, inner sep=1pt, minimum width=1pt] {};
	\node[label=above:$C_3$] (P3211) at (5.1,4) [circle,draw,fill=black!100, inner sep=1pt, minimum width=1pt] {};
	\node (P3212) at (5.7,4) [circle,draw,fill=black!100, inner sep=1pt, minimum width=1pt] {};
	\node (P3221) at (6.3,4) [circle,draw,fill=black!100, inner sep=1pt, minimum width=1pt] {};
	\node (P3222) at (6.9,4) [circle,draw,fill=black!100, inner sep=1pt, minimum width=1pt] {};
	\draw (P0)--(P1)--(P11)--(P111)--(P1111);
	\draw (P111)--(P1112);
	\draw (P11)--(P112)--(P1121);
	\draw (P112)--(P1122);
	\draw (P1)--(P12)--(P121)--(P1211);
	\draw (P121)--(P1212);
	\draw (P12)--(P122)--(P1221);
	\draw (P122)--(P1222);
	\draw (P0)--(P2)--(P21)--(P211)--(P2111);
	\draw (P211)--(P2112);
	\draw (P21)--(P212)--(P2121);
	\draw (P212)--(P2122);
	\draw (P2)--(P22)--(P221)--(P2211);
	\draw (P221)--(P2212);
	\draw (P22)--(P222)--(P2221);
	\draw (P222)--(P2222);
	\draw (P0)--(P3)--(P31)--(P311)--(P3111);
	\draw (P311)--(P3112);
	\draw (P31)--(P312)--(P3121);
	\draw (P312)--(P3122);
	\draw (P3)--(P32)--(P321)--(P3211);
	\draw (P321)--(P3212);
	\draw (P32)--(P322)--(P3221);
	\draw (P322)--(P3222);
	\path[dashed] (P2112.north) edge [out=30,in=150] node [right] {} (P3111.north);
	\path[dashed] (P2212.north) edge [out=30,in=150] node [right] {} (P3211.north);
	\draw (-4.8,1.75) ellipse (45pt and 28pt);
	\end{tikzpicture}
\end{figure}

\begin{proof}[Proof of Lemma~\ref{lem:main}]
The proof of this lemma is by case analysis of the relative ``current" positions of the robber and the cops at any given point in the game when it is the robber's turn to move. We start by assuming that we are playing on a cubic graph of girth at least $8$. We further assume that we are playing with three cops, and that in the ``current" position they have not trapped the robber. We will show that under these assumptions, the cops have also not $2$-trapped the robber unless their configuration is as described in the statement of this lemma. 

Label the edges incident with the robber's current vertex as $e_1$, $e_2$, and $e_3$, and use $v_1$, $v_2$, and $v_3$ to denote the vertices at the opposite end of $e_1$, $e_2$, and $e_3$ (respectively) from the robber's current position. When we speak of a ``branch from $v_i$" ($i \in \{1,2,3\}$) we mean the set of $8$ vertices that include $v_i$ itself, along with all of the vertices that are at distance $4$ or less from the robber along a path that includes a fixed one of the two other neighbours of $v_i$. Figure~\ref{fig:branches} has each of the two branches from $v_2$ circled. Note that by this definition, $v_i$ itself is on both of the branches from $v_i$.

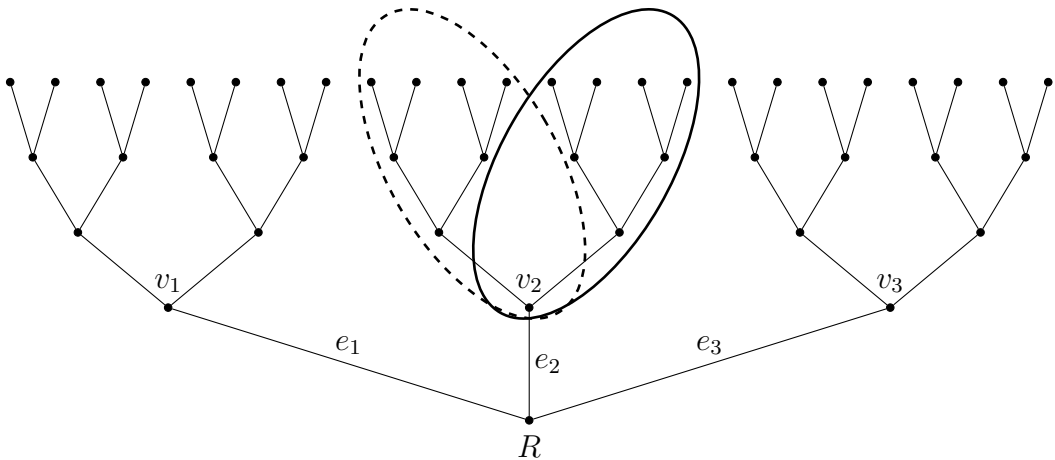
\begin{figure}\caption{Branches from $v_2$ (the left branch is in a dashed ellipse).}\label{fig:branches}
	\begin{tikzpicture}
	\node at (-2.4,.5) {$e_1$};
	\node at (.25,.25) {$e_2$};
	\node at (2.4,.5) {$e_3$};
	\node[label=below:$R$] (P0) at (0,-.5) [circle,draw,fill=black!100, inner sep=1pt, minimum width=1pt] {};
	\node[label=above:$v_1$] (P1) at (-4.8,1) [circle,draw,fill=black!100, inner sep=1pt, minimum width=1pt] {};
	\node[label=above:$v_2$] (P2) at (0,1) [circle,draw,fill=black!100, inner sep=1pt, minimum width=1pt] {};
	\node[label=above:$v_3$] (P3) at (4.8,1) [circle,draw,fill=black!100, inner sep=1pt, minimum width=1pt] {};
	\node (P11) at (-6,2) [circle,draw,fill=black!100, inner sep=1pt, minimum width=1pt] {};
	\node (P12) at (-3.6,2) [circle,draw,fill=black!100, inner sep=1pt, minimum width=1pt] {};
	\node (P21) at (-1.2,2) [circle,draw,fill=black!100, inner sep=1pt, minimum width=1pt] {};
	\node (P22) at (1.2,2) [circle,draw,fill=black!100, inner sep=1pt, minimum width=1pt] {};
	\node (P31) at (3.6,2) [circle,draw,fill=black!100, inner sep=1pt, minimum width=1pt] {};
	\node (P32) at (6,2) [circle,draw,fill=black!100, inner sep=1pt, minimum width=1pt] {};
	\node (P111) at (-6.6,3) [circle,draw,fill=black!100, inner sep=1pt, minimum width=1pt] {};
	\node (P112) at (-5.4,3) [circle,draw,fill=black!100, inner sep=1pt, minimum width=1pt] {};
	\node (P121) at (-4.2,3) [circle,draw,fill=black!100, inner sep=1pt, minimum width=1pt] {};
	\node (P122) at (-3,3) [circle,draw,fill=black!100, inner sep=1pt, minimum width=1pt] {};
	\node (P211) at (-1.8,3) [circle,draw,fill=black!100, inner sep=1pt, minimum width=1pt] {};
	\node (P212) at (-.6,3) [circle,draw,fill=black!100, inner sep=1pt, minimum width=1pt] {};
	\node (P221) at (0.6,3) [circle,draw,fill=black!100, inner sep=1pt, minimum width=1pt] {};
	\node (P222) at (1.8,3) [circle,draw,fill=black!100, inner sep=1pt, minimum width=1pt] {};
	\node (P311) at (3,3) [circle,draw,fill=black!100, inner sep=1pt, minimum width=1pt] {};
	\node (P312) at (4.2,3) [circle,draw,fill=black!100, inner sep=1pt, minimum width=1pt] {};
	\node (P321) at (5.4,3) [circle,draw,fill=black!100, inner sep=1pt, minimum width=1pt] {};
	\node (P322) at (6.6,3) [circle,draw,fill=black!100, inner sep=1pt, minimum width=1pt] {};
	\node (P1111) at (-6.9,4) [circle,draw,fill=black!100, inner sep=1pt, minimum width=1pt] {};
	\node (P1112) at (-6.3,4) [circle,draw,fill=black!100, inner sep=1pt, minimum width=1pt] {};
	\node (P1121) at (-5.7,4) [circle,draw,fill=black!100, inner sep=1pt, minimum width=1pt] {};
	\node (P1122) at (-5.1,4) [circle,draw,fill=black!100, inner sep=1pt, minimum width=1pt] {};
	\node (P1211) at (-4.5,4) [circle,draw,fill=black!100, inner sep=1pt, minimum width=1pt] {};
	\node (P1212) at (-3.9,4) [circle,draw,fill=black!100, inner sep=1pt, minimum width=1pt] {};
	\node (P1221) at (-3.3,4) [circle,draw,fill=black!100, inner sep=1pt, minimum width=1pt] {};
	\node (P1222) at (-2.7,4) [circle,draw,fill=black!100, inner sep=1pt, minimum width=1pt] {};
	\node (P2111) at (-2.1,4) [circle,draw,fill=black!100, inner sep=1pt, minimum width=1pt] {};
	\node (P2112) at (-1.5,4) [circle,draw,fill=black!100, inner sep=1pt, minimum width=1pt] {};
	\node (P2121) at (-.9,4) [circle,draw,fill=black!100, inner sep=1pt, minimum width=1pt] {};
	\node (P2122) at (-.3,4) [circle,draw,fill=black!100, inner sep=1pt, minimum width=1pt] {};
	\node (P2211) at (.3,4) [circle,draw,fill=black!100, inner sep=1pt, minimum width=1pt] {};
	\node (P2212) at (.9,4) [circle,draw,fill=black!100, inner sep=1pt, minimum width=1pt] {};
	\node (P2221) at (1.5,4) [circle,draw,fill=black!100, inner sep=1pt, minimum width=1pt] {};
	\node (P2222) at (2.1,4) [circle,draw,fill=black!100, inner sep=1pt, minimum width=1pt] {};
	\node (P3111) at (2.7,4) [circle,draw,fill=black!100, inner sep=1pt, minimum width=1pt] {};
	\node (P3112) at (3.3,4) [circle,draw,fill=black!100, inner sep=1pt, minimum width=1pt] {};
	\node (P3121) at (3.9,4) [circle,draw,fill=black!100, inner sep=1pt, minimum width=1pt] {};
	\node (P3122) at (4.5,4) [circle,draw,fill=black!100, inner sep=1pt, minimum width=1pt] {};
	\node (P3211) at (5.1,4) [circle,draw,fill=black!100, inner sep=1pt, minimum width=1pt] {};
	\node (P3212) at (5.7,4) [circle,draw,fill=black!100, inner sep=1pt, minimum width=1pt] {};
	\node (P3221) at (6.3,4) [circle,draw,fill=black!100, inner sep=1pt, minimum width=1pt] {};
	\node (P3222) at (6.9,4) [circle,draw,fill=black!100, inner sep=1pt, minimum width=1pt] {};
	\draw (P0)--(P1)--(P11)--(P111)--(P1111);
	\draw (P111)--(P1112);
	\draw (P11)--(P112)--(P1121);
	\draw (P112)--(P1122);
	\draw (P1)--(P12)--(P121)--(P1211);
	\draw (P121)--(P1212);
	\draw (P12)--(P122)--(P1221);
	\draw (P122)--(P1222);
	\draw (P0)--(P2)--(P21)--(P211)--(P2111);
	\draw (P211)--(P2112);
	\draw (P21)--(P212)--(P2121);
	\draw (P212)--(P2122);
	\draw (P2)--(P22)--(P221)--(P2211);
	\draw (P221)--(P2212);
	\draw (P22)--(P222)--(P2221);
	\draw (P222)--(P2222);
	\draw (P0)--(P3)--(P31)--(P311)--(P3111);
	\draw (P311)--(P3112);
	\draw (P31)--(P312)--(P3121);
	\draw (P312)--(P3122);
	\draw (P3)--(P32)--(P321)--(P3211);
	\draw (P321)--(P3212);
	\draw (P32)--(P322)--(P3221);
	\draw (P322)--(P3222);
	\draw[rotate=-30,line width=1pt] (-.8,2.9) ellipse (32pt and 65pt);
	\draw[rotate=30,line width=1pt, dashed] (.8,2.9) ellipse (32pt and 65pt);
	\end{tikzpicture}
	\end{figure}

Recall that since the girth is at least $8$, if a cop is at distance $4$ or more from the robber, more than one of the vertices $v_1$, $v_2$, and $v_3$ may lie on a shortest path between the cop and the robber. Since the cops have not trapped the robber, there is at least one vertex ($v_1$, $v_2$, or $v_3$) that does not lie on a shortest path of length $2$ or less between the robber and some cop. The cases are as follows:
\begin{enumerate}
\item there is no cop within distance $2$ of the robber's current position.
\item there is at least one vertex $v_i$ ($i \in \{1,2,3\}$) that has no cops on at least one of its branches, and any cop on a branch from $v_i$ is not at distance $2$ or less from the robber.
\item every vertex $v_i$ ($i \in \{1,2,3\}$) either has a cop at distance $2$ or less from the robber on one of its branches (and this is the case for some $v_i$), or has at least one cop on each of its branches.
\end{enumerate}

To begin with, let us explain briefly why these cases are exhaustive. If Case~$1$ does not apply, then there is at least one cop within distance $2$ of the robber. Without loss of generality, let us suppose that this cop is on $v_1$ or one of its neighbours. 
If Case~$3$ does not apply, then there is some vertex $v_j$ that has no cop at distance $2$ or less from the robber and no cop on one of its branches. So Case~$2$ applies to $v_j$.

This case analysis will be illustrated with figures. Keep in mind that if the girth is $8$, some of the vertices at distance $4$ from the robber in our figures may be the same as each other, but there are no other duplicated vertices in these illustrations.

\textsc{Case 1. There is no cop within distance $2$ of the robber's current position.}

This situation is illustrated in Figure~\ref{fig:cops-3or4}. This case does cover configurations that are not exactly like the illustration. For example, one cop might be sitting on a vertex that has paths of length $4$ to the robber's position through both $v_1$ and $v_2$, or there might be no cop within distance $4$ on either branch containing $v_1$. However, no cop is on any uncircled vertex of the illustrated subgraph.

\begin{figure}\caption{There is no cop within distance $2$ of the robber's position.}\label{fig:cops-3or4}
\begin{tikzpicture}
\node at (-2.4,.5) {$e_1$};
\node at (.25,.25) {$e_2$};
\node at (2.4,.5) {$e_3$};
\node at (-4.8,3.5) {$C_1$};
\node at (0,3.5) {$C_2$};
\node at (4.8,3.5) {$C_3$};
\node[label=below:$R$] (P0) at (0,-.5) [circle,draw,fill=black!100, inner sep=1pt, minimum width=1pt] {};
\node[label=above:$v_1$] (P1) at (-4.8,1) [circle,draw,fill=black!100, inner sep=1pt, minimum width=1pt] {};
\node[label=above:$v_2$] (P2) at (0,1) [circle,draw,fill=black!100, inner sep=1pt, minimum width=1pt] {};
\node[label=above:$v_3$] (P3) at (4.8,1) [circle,draw,fill=black!100, inner sep=1pt, minimum width=1pt] {};
\node (P11) at (-6,2) [circle,draw,fill=black!100, inner sep=1pt, minimum width=1pt] {};
\node (P12) at (-3.6,2) [circle,draw,fill=black!100, inner sep=1pt, minimum width=1pt] {};
\node (P21) at (-1.2,2) [circle,draw,fill=black!100, inner sep=1pt, minimum width=1pt] {};
\node (P22) at (1.2,2) [circle,draw,fill=black!100, inner sep=1pt, minimum width=1pt] {};
\node (P31) at (3.6,2) [circle,draw,fill=black!100, inner sep=1pt, minimum width=1pt] {};
\node (P32) at (6,2) [circle,draw,fill=black!100, inner sep=1pt, minimum width=1pt] {};
\node (P111) at (-6.6,3) [circle,draw,fill=black!100, inner sep=1pt, minimum width=1pt] {};
\node (P112) at (-5.4,3) [circle,draw,fill=black!100, inner sep=1pt, minimum width=1pt] {};
\node (P121) at (-4.2,3) [circle,draw,fill=black!100, inner sep=1pt, minimum width=1pt] {};
\node (P122) at (-3,3) [circle,draw,fill=black!100, inner sep=1pt, minimum width=1pt] {};
\node (P211) at (-1.8,3) [circle,draw,fill=black!100, inner sep=1pt, minimum width=1pt] {};
\node (P212) at (-.6,3) [circle,draw,fill=black!100, inner sep=1pt, minimum width=1pt] {};
\node (P221) at (0.6,3) [circle,draw,fill=black!100, inner sep=1pt, minimum width=1pt] {};
\node (P222) at (1.8,3) [circle,draw,fill=black!100, inner sep=1pt, minimum width=1pt] {};
\node (P311) at (3,3) [circle,draw,fill=black!100, inner sep=1pt, minimum width=1pt] {};
\node (P312) at (4.2,3) [circle,draw,fill=black!100, inner sep=1pt, minimum width=1pt] {};
\node (P321) at (5.4,3) [circle,draw,fill=black!100, inner sep=1pt, minimum width=1pt] {};
\node (P322) at (6.6,3) [circle,draw,fill=black!100, inner sep=1pt, minimum width=1pt] {};
\node (P1111) at (-6.9,4) [circle,draw,fill=black!100, inner sep=1pt, minimum width=1pt] {};
\node (P1112) at (-6.3,4) [circle,draw,fill=black!100, inner sep=1pt, minimum width=1pt] {};
\node (P1121) at (-5.7,4) [circle,draw,fill=black!100, inner sep=1pt, minimum width=1pt] {};
\node (P1122) at (-5.1,4) [circle,draw,fill=black!100, inner sep=1pt, minimum width=1pt] {};
\node (P1211) at (-4.5,4) [circle,draw,fill=black!100, inner sep=1pt, minimum width=1pt] {};
\node (P1212) at (-3.9,4) [circle,draw,fill=black!100, inner sep=1pt, minimum width=1pt] {};
\node (P1221) at (-3.3,4) [circle,draw,fill=black!100, inner sep=1pt, minimum width=1pt] {};
\node (P1222) at (-2.7,4) [circle,draw,fill=black!100, inner sep=1pt, minimum width=1pt] {};
\node (P2111) at (-2.1,4) [circle,draw,fill=black!100, inner sep=1pt, minimum width=1pt] {};
\node (P2112) at (-1.5,4) [circle,draw,fill=black!100, inner sep=1pt, minimum width=1pt] {};
\node (P2121) at (-.9,4) [circle,draw,fill=black!100, inner sep=1pt, minimum width=1pt] {};
\node (P2122) at (-.3,4) [circle,draw,fill=black!100, inner sep=1pt, minimum width=1pt] {};
\node (P2211) at (.3,4) [circle,draw,fill=black!100, inner sep=1pt, minimum width=1pt] {};
\node (P2212) at (.9,4) [circle,draw,fill=black!100, inner sep=1pt, minimum width=1pt] {};
\node (P2221) at (1.5,4) [circle,draw,fill=black!100, inner sep=1pt, minimum width=1pt] {};
\node (P2222) at (2.1,4) [circle,draw,fill=black!100, inner sep=1pt, minimum width=1pt] {};
\node (P3111) at (2.7,4) [circle,draw,fill=black!100, inner sep=1pt, minimum width=1pt] {};
\node (P3112) at (3.3,4) [circle,draw,fill=black!100, inner sep=1pt, minimum width=1pt] {};
\node (P3121) at (3.9,4) [circle,draw,fill=black!100, inner sep=1pt, minimum width=1pt] {};
\node (P3122) at (4.5,4) [circle,draw,fill=black!100, inner sep=1pt, minimum width=1pt] {};
\node (P3211) at (5.1,4) [circle,draw,fill=black!100, inner sep=1pt, minimum width=1pt] {};
\node (P3212) at (5.7,4) [circle,draw,fill=black!100, inner sep=1pt, minimum width=1pt] {};
\node (P3221) at (6.3,4) [circle,draw,fill=black!100, inner sep=1pt, minimum width=1pt] {};
\node (P3222) at (6.9,4) [circle,draw,fill=black!100, inner sep=1pt, minimum width=1pt] {};
\draw (P0)--(P1)--(P11)--(P111)--(P1111);
\draw (P111)--(P1112);
\draw (P11)--(P112)--(P1121);
\draw (P112)--(P1122);
\draw (P1)--(P12)--(P121)--(P1211);
\draw (P121)--(P1212);
\draw (P12)--(P122)--(P1221);
\draw (P122)--(P1222);
\draw (P0)--(P2)--(P21)--(P211)--(P2111);
\draw (P211)--(P2112);
\draw (P21)--(P212)--(P2121);
\draw (P212)--(P2122);
\draw (P2)--(P22)--(P221)--(P2211);
\draw (P221)--(P2212);
\draw (P22)--(P222)--(P2221);
\draw (P222)--(P2222);
\draw (P0)--(P3)--(P31)--(P311)--(P3111);
\draw (P311)--(P3112);
\draw (P31)--(P312)--(P3121);
\draw (P312)--(P3122);
\draw (P3)--(P32)--(P321)--(P3211);
\draw (P321)--(P3212);
\draw (P32)--(P322)--(P3221);
\draw (P322)--(P3222);
\draw (-4.8,3.5) ellipse (67pt and 40pt);
\draw (0,3.5) ellipse (67pt and 40pt);
\draw (4.8,3.5) ellipse (67pt and 40pt);
\end{tikzpicture}
\end{figure}
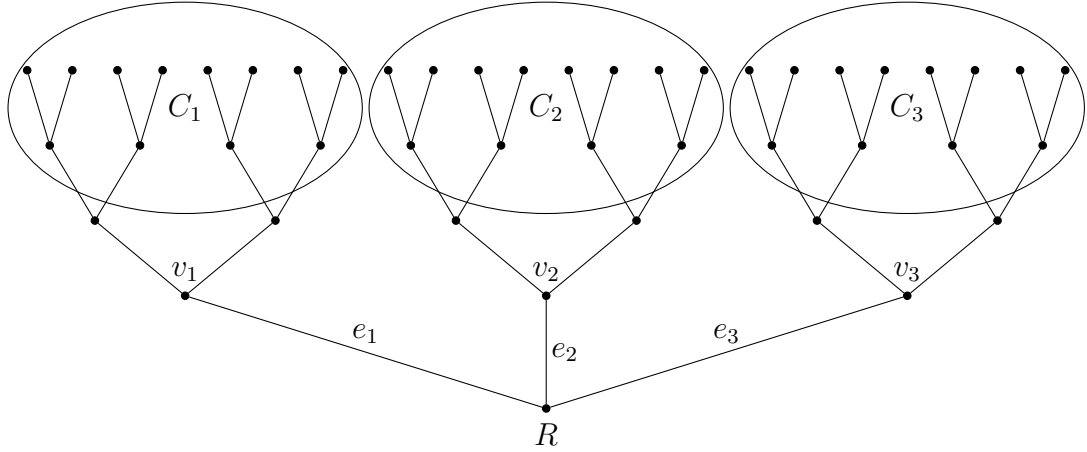

In Case~$1$, the robber may move to any of its adjacent vertices ($v_1$, $v_2$, or $v_3$). Without loss of generality, suppose that the robber chooses to move to vertex $v_1$. The cops' initial positions mean that none of them can use their move to reach the robber's initial vertex, or vertices $v_2$ or $v_3$. Therefore, the cops have not trapped the robber on $v_1$. This means that the cops had not $2$-trapped the robber previously.

\textsc{Case 2. There is at least one vertex $v_i$ ($i \in \{1,2,3\}$) that has no cops on at least one of its branches, and any cop on a branch from $v_i$ is not at distance $2$ or less from the robber.}

In Figure~\ref{fig:cop-far} we have illustrated this case. Without loss of generality, suppose (as in the illustration) that vertex $v_2$ satisfies the hypothesis. Since there is no cop at distance $2$ or less  there is no cop on $v_2$ or its neighbours; since there is no cop on one the branches from $v_2$, we may assume without loss of generality that if any cops are here at all, they are on the left-hand branch. 

\begin{figure}\caption{There is no cop on one of the branches from $v_2$, and any cops on a branch from $v_2$ are not at distance $2$ or less (from the robber). So (assuming any cops are on the left branch) there are no cops in encircled areas.}\label{fig:cop-far}
\begin{tikzpicture}
\node at (-2.4,.5) {$e_1$};
\node at (.25,.25) {$e_2$};
\node at (2.4,.5) {$e_3$};
\node[label=below:$R$] (P0) at (0,-.5) [circle,draw,fill=black!100, inner sep=1pt, minimum width=1pt] {};
\node[label=above:$v_1$] (P1) at (-4.8,1) [circle,draw,fill=black!100, inner sep=1pt, minimum width=1pt] {};
\node[label=above:$v_2$] (P2) at (0,1) [circle,draw,fill=black!100, inner sep=1pt, minimum width=1pt] {};
\node[label=above:$v_3$] (P3) at (4.8,1) [circle,draw,fill=black!100, inner sep=1pt, minimum width=1pt] {};
\node (P11) at (-6,2) [circle,draw,fill=black!100, inner sep=1pt, minimum width=1pt] {};
\node (P12) at (-3.6,2) [circle,draw,fill=black!100, inner sep=1pt, minimum width=1pt] {};
\node (P21) at (-1.2,2) [circle,draw,fill=black!100, inner sep=1pt, minimum width=1pt] {};
\node (P22) at (1.2,2) [circle,draw,fill=black!100, inner sep=1pt, minimum width=1pt] {};
\node (P31) at (3.6,2) [circle,draw,fill=black!100, inner sep=1pt, minimum width=1pt] {};
\node (P32) at (6,2) [circle,draw,fill=black!100, inner sep=1pt, minimum width=1pt] {};
\node (P111) at (-6.6,3) [circle,draw,fill=black!100, inner sep=1pt, minimum width=1pt] {};
\node (P112) at (-5.4,3) [circle,draw,fill=black!100, inner sep=1pt, minimum width=1pt] {};
\node (P121) at (-4.2,3) [circle,draw,fill=black!100, inner sep=1pt, minimum width=1pt] {};
\node (P122) at (-3,3) [circle,draw,fill=black!100, inner sep=1pt, minimum width=1pt] {};
\node (P211) at (-1.8,3) [circle,draw,fill=black!100, inner sep=1pt, minimum width=1pt] {};
\node (P212) at (-.6,3) [circle,draw,fill=black!100, inner sep=1pt, minimum width=1pt] {};
\node (P221) at (0.6,3) [circle,draw,fill=black!100, inner sep=1pt, minimum width=1pt] {};
\node (P222) at (1.8,3) [circle,draw,fill=black!100, inner sep=1pt, minimum width=1pt] {};
\node (P311) at (3,3) [circle,draw,fill=black!100, inner sep=1pt, minimum width=1pt] {};
\node (P312) at (4.2,3) [circle,draw,fill=black!100, inner sep=1pt, minimum width=1pt] {};
\node (P321) at (5.4,3) [circle,draw,fill=black!100, inner sep=1pt, minimum width=1pt] {};
\node (P322) at (6.6,3) [circle,draw,fill=black!100, inner sep=1pt, minimum width=1pt] {};
\node (P1111) at (-6.9,4) [circle,draw,fill=black!100, inner sep=1pt, minimum width=1pt] {};
\node (P1112) at (-6.3,4) [circle,draw,fill=black!100, inner sep=1pt, minimum width=1pt] {};
\node (P1121) at (-5.7,4) [circle,draw,fill=black!100, inner sep=1pt, minimum width=1pt] {};
\node (P1122) at (-5.1,4) [circle,draw,fill=black!100, inner sep=1pt, minimum width=1pt] {};
\node (P1211) at (-4.5,4) [circle,draw,fill=black!100, inner sep=1pt, minimum width=1pt] {};
\node (P1212) at (-3.9,4) [circle,draw,fill=black!100, inner sep=1pt, minimum width=1pt] {};
\node (P1221) at (-3.3,4) [circle,draw,fill=black!100, inner sep=1pt, minimum width=1pt] {};
\node (P1222) at (-2.7,4) [circle,draw,fill=black!100, inner sep=1pt, minimum width=1pt] {};
\node (P2111) at (-2.1,4) [circle,draw,fill=black!100, inner sep=1pt, minimum width=1pt] {};
\node (P2112) at (-1.5,4) [circle,draw,fill=black!100, inner sep=1pt, minimum width=1pt] {};
\node (P2121) at (-.9,4) [circle,draw,fill=black!100, inner sep=1pt, minimum width=1pt] {};
\node (P2122) at (-.3,4) [circle,draw,fill=black!100, inner sep=1pt, minimum width=1pt] {};
\node (P2211) at (.3,4) [circle,draw,fill=black!100, inner sep=1pt, minimum width=1pt] {};
\node (P2212) at (.9,4) [circle,draw,fill=black!100, inner sep=1pt, minimum width=1pt] {};
\node (P2221) at (1.5,4) [circle,draw,fill=black!100, inner sep=1pt, minimum width=1pt] {};
\node (P2222) at (2.1,4) [circle,draw,fill=black!100, inner sep=1pt, minimum width=1pt] {};
\node (P3111) at (2.7,4) [circle,draw,fill=black!100, inner sep=1pt, minimum width=1pt] {};
\node (P3112) at (3.3,4) [circle,draw,fill=black!100, inner sep=1pt, minimum width=1pt] {};
\node (P3121) at (3.9,4) [circle,draw,fill=black!100, inner sep=1pt, minimum width=1pt] {};
\node (P3122) at (4.5,4) [circle,draw,fill=black!100, inner sep=1pt, minimum width=1pt] {};
\node (P3211) at (5.1,4) [circle,draw,fill=black!100, inner sep=1pt, minimum width=1pt] {};
\node (P3212) at (5.7,4) [circle,draw,fill=black!100, inner sep=1pt, minimum width=1pt] {};
\node (P3221) at (6.3,4) [circle,draw,fill=black!100, inner sep=1pt, minimum width=1pt] {};
\node (P3222) at (6.9,4) [circle,draw,fill=black!100, inner sep=1pt, minimum width=1pt] {};
\draw (P0)--(P1)--(P11)--(P111)--(P1111);
\draw (P111)--(P1112);
\draw (P11)--(P112)--(P1121);
\draw (P112)--(P1122);
\draw (P1)--(P12)--(P121)--(P1211);
\draw (P121)--(P1212);
\draw (P12)--(P122)--(P1221);
\draw (P122)--(P1222);
\draw (P0)--(P2)--(P21)--(P211)--(P2111);
\draw (P211)--(P2112);
\draw (P21)--(P212)--(P2121);
\draw (P212)--(P2122);
\draw (P2)--(P22)--(P221)--(P2211);
\draw (P221)--(P2212);
\draw (P22)--(P222)--(P2221);
\draw (P222)--(P2222);
\draw (P0)--(P3)--(P31)--(P311)--(P3111);
\draw (P311)--(P3112);
\draw (P31)--(P312)--(P3121);
\draw (P312)--(P3122);
\draw (P3)--(P32)--(P321)--(P3211);
\draw (P321)--(P3212);
\draw (P32)--(P322)--(P3221);
\draw (P322)--(P3222);
\draw (1.2,3.55) ellipse (35pt and 25pt);
\draw (0,1.75) ellipse (40pt and 26pt);
\end{tikzpicture}
\end{figure}
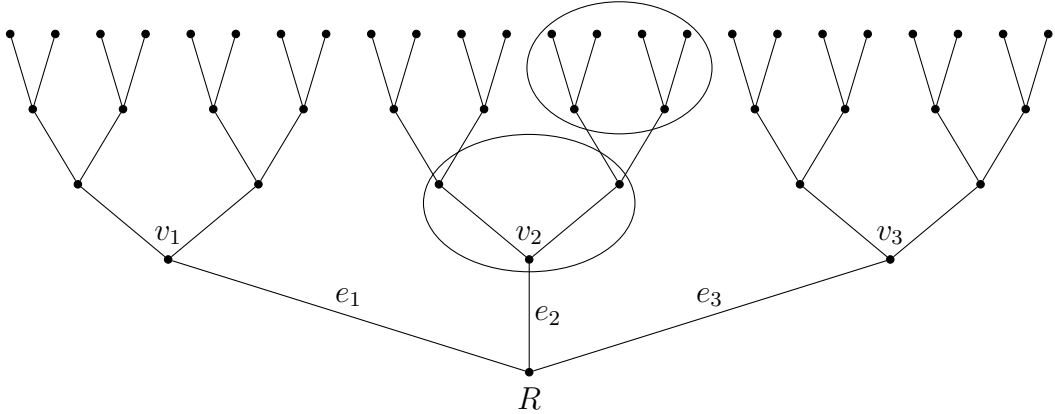

In this case, the robber should move to vertex $v_2$. Since no cop was within $2$ of the robber on either branch from $v_2$, no cop is in a position to capture the robber immediately on $v_2$. 
Since no cop was on any vertex of the right-hand branch from $v_2$, after the cops' move no cop can be within distance $2$ of $v_2$ on this branch. Therefore the cops will not have trapped the robber after their move.

Once again, this means that the cops had not $2$-trapped the robber in the initial configuration.

\textsc{Case 3. Every vertex $v_i$ ($i \in \{1,2,3\}$) either has a cop at distance $2$ or less from the robber on one of its branches (and this is the case for some $v_i$), or has at least one cop on each of its branches.}

Without loss of generality, we may assume that cop $C_1$ is at distance $2$ or less from the robber, being on vertex $v_1$ or one of its neighbours (other than the robber's vertex). 

Suppose momentarily that there is a second cop at distance $2$ or less from the robber on one of the branches from one of the other neighbours of the robber's vertex. Without loss of generality, we may assume that this neighbour is $v_2$. Since neither of these cops is on a branch of $v_3$, there cannot be a cop on each of the branches from $v_3$ unless that cop is also at distance $2$ or less from the robber. This contradicts our hypothesis that the robber is not yet trapped. 

Thus, there must be one cop on each of the branches from $v_2$ (and neither of these cops is within distance $2$ of the robber) and one cop on each of the branches from $v_3$ (not within distance $2$ of the robber). Due to our girth hypothesis, a single cop cannot be on both branches from a vertex $v_i$ unless she is on vertex $v_i$ (and thus within $2$ of the robber). Since $C_1$ is not on either branch from $v_2$ or either branch from $v_3$, these four branches must be covered by cops $C_2$ and $C_3$. The only way a cop can be on a branch from $v_2$ and a branch from $v_3$ is for it to be sitting at distance $4$ from the robber, on a vertex that is simultaneously on a branch from $v_2$ and a branch from $v_3$.

This means that each of $C_2$ and $C_3$ is at the antipodal vertex of a cycle of length $8$ from the robber, and (since all four branches from $v_2$ and $v_3$ are involved in these $8$-cycles) the intersection of these two cycles consists precisely of the edges $e_2$ and $e_3$. This is the configuration described in the statement of our lemma as $2$-trapping the robber, and illustrated in Figure~\ref{fig:2-trapped}.
\end{proof}

This lemma allows us to prove our main result.

\begin{thm}\label{thm:main}
Let $G$ be a cubic graph of girth at least $8$. Unless $G$ contains two cycles of length $8$ whose intersection is a path of length $2$ (with two edges and three vertices), we conclude that $c(G) \ge 4$: in particular, if $G$ is a generalized Petersen graph, then this means $c(G)=4$.
\end{thm}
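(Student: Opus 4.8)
The plan is to prove the contrapositive of the lower bound: assuming $G$ is a cubic graph of girth at least $8$ that contains no two $8$-cycles whose intersection is a path of length $2$, I would show that three cops cannot win, so that $c(G)\ge 4$. The whole strategy is to demonstrate that any winning play for three cops would force precisely the configuration that $G$ is assumed to lack, and then to read off the generalized Petersen statement by combining this lower bound with the known upper bound.

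First I would recall the reduction already established in Section~\ref{sec:2-cops}: whenever the robber sits in a configuration that is not \emph{trapped}, he has a legal non-passing move that avoids capture on the cops' next turn. Hence a robber who is never trapped is never caught, so any winning strategy for three cops must, against an optimal evasive robber, eventually produce (on a robber-turn) a configuration in which the robber \emph{is} trapped. Since the robber places his pawn last and the graph is large and sparse, his initial position is not trapped, so such a trapped configuration, if it occurs at all, has a genuine predecessor robber-turn.

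The key step is to show that the \emph{first} robber-turn at which the robber is trapped, say turn $t+1$, is immediately preceded by a configuration that is \emph{$2$-trapped but not trapped}. By minimality the robber is not trapped at turn $t$. If he were also not $2$-trapped at turn $t$, then by definition he would have a non-passing move after which the cops cannot trap him on their reply, i.e.\ a move into a configuration that is not trapped at turn $t+1$; an optimal robber, who survives precisely by avoiding being trapped, would take that move, contradicting that turn $t+1$ is the first trapped turn. The passing option is handled exactly as in the commentary following Lemma~\ref{lem:main}: passing merely postpones the robber's forced move without producing any new configuration. Thus the configuration at turn $t$ is $2$-trapped but not trapped, and Lemma~\ref{lem:main} tells us it must be the unique such configuration — two cops antipodal on two $8$-cycles whose intersection is a path of length $2$ through the robber's vertex. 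Because $G$ contains no two such $8$-cycles, this configuration never arises, so no first trapped turn exists, the robber is never trapped, and three cops cannot win; therefore $c(G)\ge 4$. For the generalized Petersen conclusion, combining this with the bound $c(GP(n,k))\le 4$ of Ball et al.~\cite{ball} yields $c(GP(n,k))=4$ for every $GP(n,k)$ avoiding the forbidden pair of $8$-cycles.

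The hard part will be the middle step: rigorously passing from ``the cops win'' to ``the robber is $2$-trapped but not trapped at some reachable configuration.'' This requires careful bookkeeping of the turn order, a precise notion of optimal evasive play, and a clean treatment of passing, all of which must be aligned with the definitions of \emph{trapped} and \emph{$2$-trapped}. Once this reduction is secured, the theorem follows immediately from Lemma~\ref{lem:main}, since that lemma does all of the structural work of identifying the single obstruction.
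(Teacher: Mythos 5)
Your proposal is correct and takes essentially the same route as the paper: both arguments rest entirely on Lemma~\ref{lem:main}, showing that a robber who is not trapped can never be $2$-trapped (since $G$ lacks the forbidden pair of $8$-cycles) and hence always has a move keeping him untrapped --- your ``first trapped turn'' minimality phrasing is just the contrapositive of the paper's forward induction --- and both then invoke the upper bound of Ball et al.\ for the generalized Petersen conclusion. The one point the paper nails down that you gesture at with ``large and sparse'' is the existence of an untrapped initial position: the paper counts the $12$ distinct vertices at distance $3$ from $C_1$ (distinct because the girth is at least $8$ and the graph is cubic), of which at most two are occupied by the other cops, leaving at least $10$ valid starting vertices for the robber.
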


\begin{proof}
Suppose that we play the game on $G$ with $3$ cops.

Choose any initial locations for the cops, and consider the set of vertices that are at distance $3$ from $C_1$. Since the girth of $G$ is at least $8$ and the valency is $3$, there are $12$ such vertices, all distinct. At most two of these vertices can hold the other two cops. Therefore, there are at least $10$ vertices the robber can choose for his initial position that are not occupied by any of the cops, and are not within distance $2$ of cop $C_1$. Therefore the robber has choices for his initial move that do not leave him trapped by the cops. 

Using Lemma~\ref{lem:main} inductively, we see that since the robber has not been trapped by the cops, he can always avoid becoming trapped, and can therefore win the game. (Our hypothesis about the structure of $G$ together with Lemma~\ref{lem:main} implies that the robber can never be $2$-trapped, and will therefore never become trapped.) 	

Therefore, $c(G)>3$, so $c(G) \ge 4$. 

By \cite{ball}, if $G$ is a generalized Petersen graph, then $c(G)\le 4$, so $c(G)=4$.
\end{proof}

Notice that this result does not necessarily imply that cubic graphs of girth $8$ that do have cycles of length $8$ whose intersection is a path of length $2$ do actually have cop number $3$ or less. All we have shown so far about such graphs is that they do admit placements for $3$ cops and a robber in which the cops have $2$-trapped the robber. 

Just as three cops could never actually force the robber into a trapped position in other cubic graphs of girth $8$, it is possible that three cops cannot actually force the robber into a $2$-trapped position in some (or all) families of cubic graphs in which a $2$-trapped position exists. To determine this will require further understanding of the structure of these graphs. 

In the next section, we undertake an analysis of which generalized Petersen graphs of girth $8$ admit a $2$-trapped configuration. Before doing so, we provide a generalization of Theorem~\ref{thm:main} that may be of broader interest but uses very similar arguments. It is possible that this result is known, but we did not find it in our research into the literature on this problem. The most closely-related result seems to be Frankl's bound \cite{frankl}, showing that if $\delta(G)>d$ and the girth of $G$ is at least $8t-3$ then $c(G)>d^t$. However, for girth $9$ we would only have $t=1$, so Frankl's bound would only imply that $c(G)\ge \delta(G)$, not (as we will show) that $c(G)>\delta(G)$.

\begin{thm}
If $G$ is a connected graph of minimum valency $\delta\ge 3$ and girth at least $9$, then $c(G)>\delta$.
\end{thm}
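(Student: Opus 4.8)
The plan is to adapt the strategy behind Theorem~\ref{thm:main} almost verbatim, replacing ``$3$ cops'' by ``$\delta$ cops'' and exploiting the stronger girth hypothesis. We play with $\delta$ cops and argue that the robber can forever maintain the invariant that, at the start of each of his turns, he is \emph{not trapped} (in the sense of the definition preceding Lemma~\ref{lem:main}, now applied to a vertex of valency $\ge\delta$: for at least one edge incident to him, no cop lies within distance $2$ along that edge). As in the proof of Theorem~\ref{thm:main}, maintaining this invariant means the robber is never captured, so $\delta$ cops do not suffice and $c(G)>\delta$. The one structural input we use repeatedly is that girth at least $9$ forbids every cycle of length $\le 8$, so the ball of radius $4$ about any vertex is a tree; in particular each vertex at distance $\le 2$ from the robber lies ``behind'' a unique neighbour, and a cop cannot move from one branch at the robber's vertex into the interior of another branch in a single step.

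The heart of the argument is the analogue of Lemma~\ref{lem:main}: \emph{with $\delta$ cops on a graph of girth at least $9$, a robber who is not trapped can never be $2$-trapped.} Suppose the robber sits at a vertex $R$ of valency $d\ge\delta$ with neighbours $v_1,\dots,v_d$, and call a neighbour $v_i$ \emph{uncovered} if no cop is within distance $2$ of $R$ along the edge $Rv_i$; since the robber is not trapped, at least one neighbour is uncovered. The key local estimate is that, after the robber steps to an uncovered $v_j$ and the cops reply, the cops cannot cover all $\deg(v_j)\ge\delta$ edges at $v_j$. First, because there is no $4$-cycle, a cop that has moved covers at most one edge at $v_j$ (two covered edges would give two paths of length $\le 2$ back to $v_j$, hence a cycle of length $\le 4$). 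Second, because the radius-$4$ ball is a tree, the only cops that can cover a \emph{forward} edge $v_jw$ (with $w\ne R$) after one move are those already lying in branch $j$ at distance $\ge 3$ from $R$, while the \emph{back} edge $v_jR$ can only be covered by a cop ending at $R$ or at some $v_i$ with $i\ne j$, that is, by a cop lying within distance $2$ of $R$ in a branch other than $j$. Consequently, trapping the robber at $v_j$ requires at least $\deg(v_j)-1\ge\delta-1$ cops inside branch $j$ together with at least one further cop in another branch.

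It remains to show the robber can always choose an uncovered neighbour at which he is \emph{not} trapped; this is the main obstacle and the only place the exact thresholds matter. If there are at least two uncovered neighbours $v_j$ and $v_{j'}$, then their branches are disjoint, so trapping the robber at \emph{both} would demand at least $2(\delta-1)$ cops, and since $2(\delta-1)>\delta$ for $\delta\ge 3$ this is impossible; hence one of them is a safe escape. If instead there is exactly one uncovered neighbour $v_j$, then the other $d-1\ge\delta-1$ neighbours are each covered by a distinct cop lying within distance $2$ of $R$ and \emph{outside} branch $j$; these account for at least $\delta-1$ of the $\delta$ cops, leaving at most one cop available inside branch $j$. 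Since trapping at $v_j$ would need at least $\delta-1\ge 2$ cops inside branch $j$, it cannot be done, and again $v_j$ is a safe escape. In either case the robber moves to a safe neighbour and remains untrapped, proving the lemma.

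Finally, for the initial placement I would argue exactly as in Theorem~\ref{thm:main}: the sphere of radius $3$ about $C_1$ is a tree with at least $\delta(\delta-1)^2$ vertices, so, since $\delta(\delta-1)^2$ exceeds the number of vertices occupied or excluded by the remaining $\delta-1$ cops, the robber can start at a vertex at distance $3$ from $C_1$ that is not adjacent to any other cop. There at most $\delta-1$ of its $\ge\delta$ incident edges can be covered, so the robber begins untrapped; applying the lemma inductively he stays untrapped and evades capture. Hence $\delta$ cops cannot win and $c(G)>\delta$, which is strictly stronger than the bound $c(G)\ge\delta$ that Frankl's inequality \cite{frankl} yields at girth $9$.
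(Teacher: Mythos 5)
Your inductive step is correct, and it is a genuine (if close) variant of the paper's. The paper proves the key claim by a greedy selection: among neighbours $u$ of the robber's vertex with no cop within distance $2$ through $u$, it prefers one with no cop within distance $4$ through $u$, and a pigeonhole argument ($\delta$ cops against at least $\delta$ pairwise disjoint branches of the radius-$4$ tree) shows the chosen $u$ has at most one cop within distance $4$ through it; a neighbour $y$ of $u$ avoiding that single cop's path then witnesses that the robber is untrapped after the exchange of moves. You instead make the cop budget explicit---trapping the robber at $v_j$ requires at least $\deg(v_j)-1\ge\delta-1$ cops inside branch $j$ plus one cop outside it---and then split into cases according to whether there are two uncovered neighbours (covering both branches is impossible since $2(\delta-1)>\delta$ for $\delta\ge 3$) or exactly one (the $\delta-1$ distinct cops certifying coverage of the other neighbours leave at most one cop in branch $j$, while $\delta-1\ge 2$ would be needed). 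Both arguments rest on the same structural fact, namely that girth at least $9$ makes the radius-$4$ ball a tree so that one cop can threaten only one branch; yours quantifies the trapping requirement, while the paper, as in Lemma~\ref{lem:main}, hides the count in its choice rule. Either route validly yields the invariant that the robber is never trapped.

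The one step that fails as written is your initial placement. You assert that $\delta(\delta-1)^2$ ``exceeds the number of vertices occupied or excluded by the remaining $\delta-1$ cops,'' but the theorem imposes no upper bound on valency: a cop sitting at distance $2$ from $C_1$ is adjacent, inside the tree, to every one of its children, all of which lie on the sphere of radius $3$, and their number can be arbitrarily large---in particular far larger than $\delta(\delta-1)^2$. So the inequality you invoke is false in general, even though the conclusion you want is true. The repair is to count per branch rather than globally: the sphere splits into at least $\delta(\delta-1)$ nonempty families (the children of the distance-$2$ vertices), and in the tree each of the other $\delta-1$ cops can occupy or be adjacent to members of at most one family, so some family is entirely free of cops and of cop-neighbours. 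Note that the paper's own proof (and its proof of Theorem~\ref{thm:main}) sidesteps this issue by requiring only that the starting vertex be \emph{unoccupied}, deducing untrappedness from the observation that trapping needs all $\delta$ cops within distance $2$ while $C_1$ sits at distance $3$; your stronger non-adjacency requirement is actually a sound instinct (the cops move first after placement, a point the paper glosses over), but it must be justified by the family count above, not by the raw comparison you state.
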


\begin{proof}
We want to show that $\delta$ cops cannot capture the robber with optimal play on both sides, because the robber can always avoid becoming trapped. So assume that we are playing the game with $\delta$ cops.

As in the proof of Theorem~\ref{thm:main}, since every vertex has valency at least $\delta$ and the girth is at least $9$, there are at least $\delta(\delta-1)^2 \ge 4\delta$ vertices at distance $3$ from the initial position of cop $C_1$, and the robber can choose any one of these that is not occupied by one of the other $\delta-1$ cops. So the robber has numerous options for an initial position that is not trapped, since it is not within distance $2$ of cop $C_1$. (Every one of the $\delta$ or more neighbours of $v$ must have a cop on either itself or one of its neighbours in order to trap the robber, which requires all $\delta$ cops, so $C_1$ needs to be involved in any trapping of the robber.)

Because we are assuming the girth is at least $9$ (rather than $8$), we can employ a simplified version of the proof of Lemma~\ref{lem:main}. Inductively assume that the robber has not yet been trapped, and call the robber's current vertex $v$. Then there is some neighbour $u$ of $v$ such that no cop is on $u$ or any of the neighbours of $u$. If it is possible to choose $u$ so that no cop is at distance $4$ or less from $v$ on a path that passes through $u$, then we do so.

With the goal of contradicting our choice of $u$, suppose that there is more than one cop at distance $4$ or less from $v$ whose shortest path uses $u$. Since there are at least $\delta$ neighbours of $v$ and only $\delta$ cops, and because the girth of $G$ is  at least $9$, the fact that the shortest paths from two of the cops to $v$ have distance $4$ or less and pass through $u$ means that there must be some other neighbour $x$ of $v$ such that none of the shortest paths from a cop to $v$ that have distance $4$ or less pass through $x$. But in this circumstance, our criteria for choosing should have led us to choose $x$ rather than $u$, a contradiction.

So we may assume that at most one cop is at distance $4$ or less from $v$ along a shortest path that uses $u$. The robber should move to $u$. Since there was no cop on $u$ or any of its neighbours, the cops cannot immediately capture the robber. 

Since $u$ has at least two neighbours other than $v$ ($\delta \ge 3$), one of these neighbours (say $y$) is not used in the shortest path from any cop who was within $4$ of $v$, to $v$. 
But this means that after the robber has moved to $u$, there will be no cop on $y$ or any of its neighbours, so the robber is not trapped.

Thus, the robber always has a move that does not allow the cops to trap him if he was not already trapped, so the cops cannot win.
\end{proof}

\section{Graphs of girth $8$ on which the robber can be $2$-trapped}\label{sec:numco}

In this section, we determine for which values of $n$ and $k$ the graph $GP(n,k)$ admits two cycles of length 8 whose intersection is a path of length 2. In other words, we are finding all the families of generalized Petersen graphs that admit a configuration in which a robber has been 2-trapped by three cops. 

We first label the vertices in the sub-tree arbitrarily. Assuming the path of length 2 has a fixed vertex $v$ as its central vertex, we see that we  have two possible  labellings, as $v$ can be in either $A$ (shown in Figure~\ref{fig:labelA}) or in $B$ (shown in Figure~\ref{fig:labelB}). 

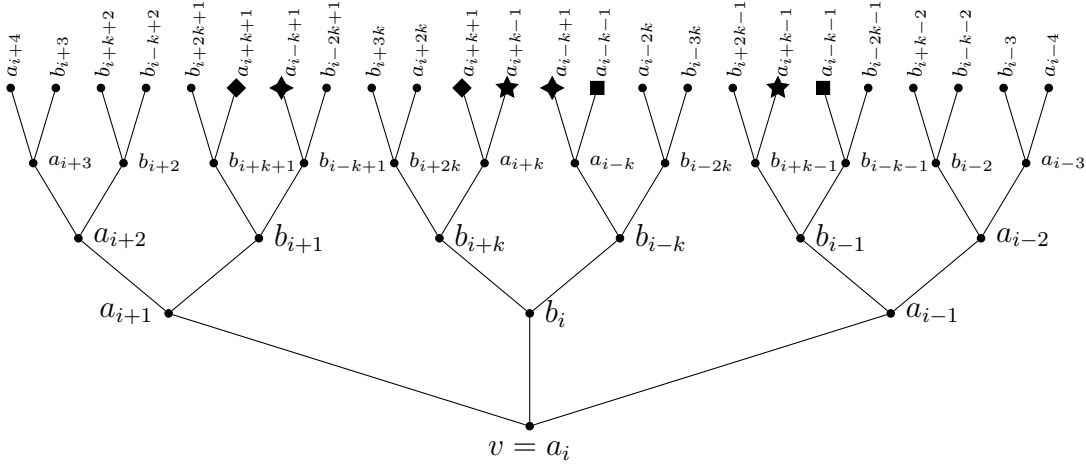
\begin{figure}\caption{If $v$ is in $A$, the labelling is as follows (where $v$ is now labelled $a_i$).}\label{fig:labelA}
\begin{tikzpicture}
\node[label=below:{$v=a_i$}] (P0) at (0,-.5) [circle,draw,fill=black!100, inner sep=1pt, minimum width=1pt] {};
\node[label=left:$a_{i+1}$] (P1) at (-4.8,1) [circle,draw,fill=black!100, inner sep=1pt, minimum width=1pt] {};
\node[label=right:$b_{i}$] (P2) at (0,1) [circle,draw,fill=black!100, inner sep=1pt, minimum width=1pt] {};
\node[label=right:$a_{i-1}$] (P3) at (4.8,1) [circle,draw,fill=black!100, inner sep=1pt, minimum width=1pt] {};
\node[label=right:$a_{i+2}$] (P11) at (-6,2) [circle,draw,fill=black!100, inner sep=1pt, minimum width=1pt] {};
\node[label=right:$b_{i+1}$] (P12) at (-3.6,2) [circle,draw,fill=black!100, inner sep=1pt, minimum width=1pt] {};
\node[label=right:$b_{i+k}$] (P21) at (-1.2,2) [circle,draw,fill=black!100, inner sep=1pt, minimum width=1pt] {};
\node[label=right:$b_{i-k}$] (P22) at (1.2,2) [circle,draw,fill=black!100, inner sep=1pt, minimum width=1pt] {};
\node[label=right:$b_{i-1}$] (P31) at (3.6,2) [circle,draw,fill=black!100, inner sep=1pt, minimum width=1pt] {};
\node[label=right:$a_{i-2}$] (P32) at (6,2) [circle,draw,fill=black!100, inner sep=1pt, minimum width=1pt] {};
\node[label={[font=\tiny]right:$a_{i+3}$}] (P111) at (-6.6,3) [circle,draw,fill=black!100, inner sep=1pt, minimum width=1pt] {};
\node[label={[font=\tiny]right:$b_{i+2}$}] (P112) at (-5.4,3) [circle,draw,fill=black!100, inner sep=1pt, minimum width=1pt] {};
\node[label={[font=\tiny]right:$b_{i+k+1}$}] (P121) at (-4.2,3) [circle,draw,fill=black!100, inner sep=1pt, minimum width=1pt] {};
\node[label={[font=\tiny]right:$b_{i-k+1}$}] (P122) at (-3,3) [circle,draw,fill=black!100, inner sep=1pt, minimum width=1pt] {};
\node[label={[font=\tiny]right:$b_{i+2k}$}] (P211) at (-1.8,3) [circle,draw,fill=black!100, inner sep=1pt, minimum width=1pt] {};
\node[label={[font=\tiny]right:$a_{i+k}$}] (P212) at (-.6,3) [circle,draw,fill=black!100, inner sep=1pt, minimum width=1pt] {};
\node[label={[font=\tiny]right:$a_{i-k}$}] (P221) at (0.6,3) [circle,draw,fill=black!100, inner sep=1pt, minimum width=1pt] {};
\node[label={[font=\tiny]right:$b_{i-2k}$}] (P222) at (1.8,3) [circle,draw,fill=black!100, inner sep=1pt, minimum width=1pt] {};
\node[label={[font=\tiny]right:$b_{i+k-1}$}] (P311) at (3,3) [circle,draw,fill=black!100, inner sep=1pt, minimum width=1pt] {};
\node[label={[font=\tiny]right:$b_{i-k-1}$}] (P312) at (4.2,3) [circle,draw,fill=black!100, inner sep=1pt, minimum width=1pt] {};
\node[label={[font=\tiny]right:$b_{i-2}$}] (P321) at (5.4,3) [circle,draw,fill=black!100, inner sep=1pt, minimum width=1pt] {};
\node[label={[font=\tiny]right:$a_{i-3}$}] (P322) at (6.6,3) [circle,draw,fill=black!100, inner sep=1pt, minimum width=1pt] {};
\node[label={[font=\tiny,rotate=90]right:$a_{i+4}$}] (P1111) at (-6.9,4) [circle,draw,fill=black!100, inner sep=1pt, minimum width=1pt] {};
\node[label={[font=\tiny,rotate=90]right:$b_{i+3}$}] (P1112) at (-6.3,4) [circle,draw,fill=black!100, inner sep=1pt, minimum width=1pt] {};
\node[label={[font=\tiny,rotate=90]right:$b_{i+k+2}$}] (P1121) at (-5.7,4) [circle,draw,fill=black!100, inner sep=1pt, minimum width=1pt] {};
\node[label={[font=\tiny,rotate=90]right:$b_{i-k+2}$}] (P1122) at (-5.1,4) [circle,draw,fill=black!100, inner sep=1pt, minimum width=1pt] {};
\node[label={[font=\tiny,rotate=90]right:$b_{i+2k+1}$}] (P1211) at (-4.5,4) [circle,draw,fill=black!100, inner sep=1pt, minimum width=1pt] {};
\node[label={[font=\tiny,rotate=90]right:$a_{i+k+1}$}] (P1212) at (-3.9,4) [diamond,draw,fill=black!100, inner sep=1.75pt, minimum width=1.75pt] {};
\node[label={[font=\tiny,rotate=90]right:$a_{i-k+1}$}] (P1221) at (-3.3,4) [star,star points=4,star point ratio=2,draw,fill=black!100, inner sep=1.5pt, minimum width=1.5pt] {};
\node[label={[font=\tiny,rotate=90]right:$b_{i-2k+1}$}] (P1222) at (-2.7,4) [circle,draw,fill=black!100, inner sep=1pt, minimum width=1pt] {};
\node[label={[font=\tiny,rotate=90]right:$b_{i+3k}$}] (P2111) at (-2.1,4) [circle,draw,fill=black!100, inner sep=1pt, minimum width=1pt] {};
\node[label={[font=\tiny,rotate=90]right:$a_{i+2k}$}] (P2112) at (-1.5,4) [circle,draw,fill=black!100, inner sep=1pt, minimum width=1pt] {};
\node[label={[font=\tiny,rotate=90]right:$a_{i+k+1}$}] (P2121) at (-.9,4) [diamond,draw,fill=black!100, inner sep=1.75pt, minimum width=1.75pt] {};
\node[label={[font=\tiny,rotate=90]right:$a_{i+k-1}$}] (P2122) at (-.3,4) [star,star points=5,star point ratio=2,draw,fill=black!100, inner sep=1.5pt, minimum width=1.5pt] {};
\node[label={[font=\tiny,rotate=90]right:$a_{i-k+1}$}] (P2211) at (.3,4) [star,star points=4,star point ratio=2,draw,fill=black!100, inner sep=1.5pt, minimum width=1.5pt] {};
\node[label={[font=\tiny,rotate=90]right:$a_{i-k-1}$}] (P2212) at (.9,4) [regular polygon,regular polygon sides=4,draw,fill=black!100, inner sep=1.75pt, minimum width=1.75pt] {};
\node[label={[font=\tiny,rotate=90]right:$a_{i-2k}$}] (P2221) at (1.5,4) [circle,draw,fill=black!100, inner sep=1pt, minimum width=1pt] {};
\node[label={[font=\tiny,rotate=90]right:$b_{i-3k}$}] (P2222) at (2.1,4) [circle,draw,fill=black!100, inner sep=1pt, minimum width=1pt] {};
\node[label={[font=\tiny,rotate=90]right:$b_{i+2k-1}$}] (P3111) at (2.7,4) [circle,draw,fill=black!100, inner sep=1pt, minimum width=1pt] {};
\node[label={[font=\tiny,rotate=90]right:$a_{i+k-1}$}] (P3112) at (3.3,4) [star,star points=5,star point ratio=2,draw,fill=black!100, inner sep=1.5pt, minimum width=1.5pt] {};
\node[label={[font=\tiny,rotate=90]right:$a_{i-k-1}$}] (P3121) at (3.9,4) [regular polygon,regular polygon sides=4,draw,fill=black!100, inner sep=1.75pt, minimum width=1.75pt] {};
\node[label={[font=\tiny,rotate=90]right:$b_{i-2k-1}$}] (P3122) at (4.5,4) [circle,draw,fill=black!100, inner sep=1pt, minimum width=1pt] {};
\node[label={[font=\tiny,rotate=90]right:$b_{i+k-2}$}] (P3211) at (5.1,4) [circle,draw,fill=black!100, inner sep=1pt, minimum width=1pt] {};
\node[label={[font=\tiny,rotate=90]right:$b_{i-k-2}$}] (P3212) at (5.7,4) [circle,draw,fill=black!100, inner sep=1pt, minimum width=1pt] {};
\node[label={[font=\tiny,rotate=90]right:$b_{i-3}$}] (P3221) at (6.3,4) [circle,draw,fill=black!100, inner sep=1pt, minimum width=1pt] {};
\node[label={[font=\tiny,rotate=90]right:$a_{i-4}$}] (P3222) at (6.9,4) [circle,draw,fill=black!100, inner sep=1pt, minimum width=1pt] {};
\draw (P0)--(P1)--(P11)--(P111)--(P1111);
\draw (P111)--(P1112);
\draw (P11)--(P112)--(P1121);
\draw (P112)--(P1122);
\draw (P1)--(P12)--(P121)--(P1211);
\draw (P121)--(P1212);
\draw (P12)--(P122)--(P1221);
\draw (P122)--(P1222);
\draw (P0)--(P2)--(P21)--(P211)--(P2111);
\draw (P211)--(P2112);
\draw (P21)--(P212)--(P2121);
\draw (P212)--(P2122);
\draw (P2)--(P22)--(P221)--(P2211);
\draw (P221)--(P2212);
\draw (P22)--(P222)--(P2221);
\draw (P222)--(P2222);
\draw (P0)--(P3)--(P31)--(P311)--(P3111);
\draw (P311)--(P3112);
\draw (P31)--(P312)--(P3121);
\draw (P312)--(P3122);
\draw (P3)--(P32)--(P321)--(P3211);
\draw (P321)--(P3212);
\draw (P32)--(P322)--(P3221);
\draw (P322)--(P3222);
\end{tikzpicture}
\end{figure}

\begin{figure}\caption{If $v$ is in $B$, the labelling is as follows (where $v$ is now labelled $b_i$).}\label{fig:labelB}
\begin{tikzpicture}
\node[label=below:{$v=b_i$}] (P0) at (0,-.5) [circle,draw,fill=black!100, inner sep=1pt, minimum width=1pt] {};
\node[label=left:$b_{i+k}$] (P1) at (-4.8,1) [circle,draw,fill=black!100, inner sep=1pt, minimum width=1pt] {};
\node[label=right:$a_{i}$] (P2) at (0,1) [circle,draw,fill=black!100, inner sep=1pt, minimum width=1pt] {};
\node[label=right:$b_{i-k}$] (P3) at (4.8,1) [circle,draw,fill=black!100, inner sep=1pt, minimum width=1pt] {};
\node[label=right:$b_{i+2k}$] (P11) at (-6,2) [circle,draw,fill=black!100, inner sep=1pt, minimum width=1pt] {};
\node[label=right:$a_{i+k}$] (P12) at (-3.6,2) [circle,draw,fill=black!100, inner sep=1pt, minimum width=1pt] {};
\node[label=right:$a_{i+1}$] (P21) at (-1.2,2) [circle,draw,fill=black!100, inner sep=1pt, minimum width=1pt] {};
\node[label=right:$a_{i-1}$] (P22) at (1.2,2) [circle,draw,fill=black!100, inner sep=1pt, minimum width=1pt] {};
\node[label=right:$a_{i-k}$] (P31) at (3.6,2) [circle,draw,fill=black!100, inner sep=1pt, minimum width=1pt] {};
\node[label=right:$b_{i-2k}$] (P32) at (6,2) [circle,draw,fill=black!100, inner sep=1pt, minimum width=1pt] {};
\node[label={[font=\tiny]right:$b_{i+3k}$}] (P111) at (-6.6,3) [circle,draw,fill=black!100, inner sep=1pt, minimum width=1pt] {};
\node[label={[font=\tiny]right:$a_{i+2k}$}] (P112) at (-5.4,3) [circle,draw,fill=black!100, inner sep=1pt, minimum width=1pt] {};
\node[label={[font=\tiny]right:$a_{i+k+1}$}] (P121) at (-4.2,3) [circle,draw,fill=black!100, inner sep=1pt, minimum width=1pt] {};
\node[label={[font=\tiny]right:$a_{i+k-1}$}] (P122) at (-3,3) [circle,draw,fill=black!100, inner sep=1pt, minimum width=1pt] {};
\node[label={[font=\tiny]right:$a_{i+2}$}] (P211) at (-1.8,3) [circle,draw,fill=black!100, inner sep=1pt, minimum width=1pt] {};
\node[label={[font=\tiny]right:$b_{i+1}$}] (P212) at (-.6,3) [circle,draw,fill=black!100, inner sep=1pt, minimum width=1pt] {};
\node[label={[font=\tiny]right:$b_{i-1}$}] (P221) at (0.6,3) [circle,draw,fill=black!100, inner sep=1pt, minimum width=1pt] {};
\node[label={[font=\tiny]right:$a_{i-2}$}] (P222) at (1.8,3) [circle,draw,fill=black!100, inner sep=1pt, minimum width=1pt] {};
\node[label={[font=\tiny]right:$a_{i-k+1}$}] (P311) at (3,3) [circle,draw,fill=black!100, inner sep=1pt, minimum width=1pt] {};
\node[label={[font=\tiny]right:$a_{i-k-1}$}] (P312) at (4.2,3) [circle,draw,fill=black!100, inner sep=1pt, minimum width=1pt] {};
\node[label={[font=\tiny]right:$a_{i-2k}$}] (P321) at (5.4,3) [circle,draw,fill=black!100, inner sep=1pt, minimum width=1pt] {};
\node[label={[font=\tiny]right:$b_{i-3k}$}] (P322) at (6.6,3) [circle,draw,fill=black!100, inner sep=1pt, minimum width=1pt] {};
\node[label={[font=\tiny,rotate=90]right:$b_{i+4k}$}] (P1111) at (-6.9,4) [circle,draw,fill=black!100, inner sep=1pt, minimum width=1pt] {};
\node[label={[font=\tiny,rotate=90]right:$a_{i+3k}$}] (P1112) at (-6.3,4) [circle,draw,fill=black!100, inner sep=1pt, minimum width=1pt] {};
\node[label={[font=\tiny,rotate=90]right:$a_{i+2k+1}$}] (P1121) at (-5.7,4) [circle,draw,fill=black!100, inner sep=1pt, minimum width=1pt] {};
\node[label={[font=\tiny,rotate=90]right:$a_{i+2k-1}$}] (P1122) at (-5.1,4) [circle,draw,fill=black!100, inner sep=1pt, minimum width=1pt] {};
\node[label={[font=\tiny,rotate=90]right:$a_{i+k+2}$}] (P1211) at (-4.5,4) [circle,draw,fill=black!100, inner sep=1pt, minimum width=1pt] {};
\node[label={[font=\tiny,rotate=90]right:$b_{i+k+1}$}] (P1212) at (-3.9,4) [diamond,draw,fill=black!100, inner sep=1.75pt, minimum width=1.75pt] {};
\node[label={[font=\tiny,rotate=90]right:$b_{i+k-1}$}] (P1221) at (-3.3,4)  [star,star points=4,star point ratio=2,draw,fill=black!100, inner sep=1.25pt, minimum width=1.25pt] {};
\node[label={[font=\tiny,rotate=90]right:$a_{i+k-2}$}] (P1222) at (-2.7,4) [circle,draw,fill=black!100, inner sep=1pt, minimum width=1pt] {};
\node[label={[font=\tiny,rotate=90]right:$a_{i+3}$}] (P2111) at (-2.1,4) [circle,draw,fill=black!100, inner sep=1pt, minimum width=1pt] {};
\node[label={[font=\tiny,rotate=90]right:$b_{i+2}$}] (P2112) at (-1.5,4) [circle,draw,fill=black!100, inner sep=1pt, minimum width=1pt] {};
\node[label={[font=\tiny,rotate=90]right:$b_{i+k+1}$}] (P2121) at (-.9,4) [diamond,draw,fill=black!100, inner sep=1.75pt, minimum width=1.75pt] {};
\node[label={[font=\tiny,rotate=90]right:$b_{i-k+1}$}] (P2122) at (-.3,4) [star,star points=5,star point ratio=2,draw,fill=black!100, inner sep=1.25pt, minimum width=1.25pt] {};
\node[label={[font=\tiny,rotate=90]right:$b_{i+k-1}$}] (P2211) at (.3,4)  [star,star points=4,star point ratio=2,draw,fill=black!100, inner sep=1.25pt, minimum width=1.25pt] {};
\node[label={[font=\tiny,rotate=90]right:$b_{i-k-1}$}] (P2212) at (.9,4) [regular polygon,regular polygon sides=4,draw,fill=black!100, inner sep=1.75pt, minimum width=1.75pt] {};
\node[label={[font=\tiny,rotate=90]right:$b_{i-2}$}] (P2221) at (1.5,4) [circle,draw,fill=black!100, inner sep=1pt, minimum width=1pt] {};
\node[label={[font=\tiny,rotate=90]right:$a_{i-3}$}] (P2222) at (2.1,4) [circle,draw,fill=black!100, inner sep=1pt, minimum width=1pt] {};
\node[label={[font=\tiny,rotate=90]right:$a_{i-k+2}$}] (P3111) at (2.7,4) [circle,draw,fill=black!100, inner sep=1pt, minimum width=1pt] {};
\node[label={[font=\tiny,rotate=90]right:$b_{i-k+1}$}] (P3112) at (3.3,4) [star,star points=5,star point ratio=2,draw,fill=black!100, inner sep=1.25pt, minimum width=1.25pt] {};
\node[label={[font=\tiny,rotate=90]right:$b_{i-k-1}$}] (P3121) at (3.9,4) [regular polygon,regular polygon sides=4,draw,fill=black!100, inner sep=1.75pt, minimum width=1.75pt] {};
\node[label={[font=\tiny,rotate=90]right:$a_{i-k-2}$}] (P3122) at (4.5,4) [circle,draw,fill=black!100, inner sep=1pt, minimum width=1pt] {};
\node[label={[font=\tiny,rotate=90]right:$a_{i-2k+1}$}] (P3211) at (5.1,4) [circle,draw,fill=black!100, inner sep=1pt, minimum width=1pt] {};
\node[label={[font=\tiny,rotate=90]right:$a_{i-2k-1}$}] (P3212) at (5.7,4) [circle,draw,fill=black!100, inner sep=1pt, minimum width=1pt] {};
\node[label={[font=\tiny,rotate=90]right:$a_{i-3k}$}] (P3221) at (6.3,4) [circle,draw,fill=black!100, inner sep=1pt, minimum width=1pt] {};
\node[label={[font=\tiny,rotate=90]right:$b_{i-4k}$}] (P3222) at (6.9,4) [circle,draw,fill=black!100, inner sep=1pt, minimum width=1pt] {};
\draw (P0)--(P1)--(P11)--(P111)--(P1111);
\draw (P111)--(P1112);
\draw (P11)--(P112)--(P1121);
\draw (P112)--(P1122);
\draw (P1)--(P12)--(P121)--(P1211);
\draw (P121)--(P1212);
\draw (P12)--(P122)--(P1221);
\draw (P122)--(P1222);
\draw (P0)--(P2)--(P21)--(P211)--(P2111);
\draw (P211)--(P2112);
\draw (P21)--(P212)--(P2121);
\draw (P212)--(P2122);
\draw (P2)--(P22)--(P221)--(P2211);
\draw (P221)--(P2212);
\draw (P22)--(P222)--(P2221);
\draw (P222)--(P2222);
\draw (P0)--(P3)--(P31)--(P311)--(P3111);
\draw (P311)--(P3112);
\draw (P31)--(P312)--(P3121);
\draw (P312)--(P3122);
\draw (P3)--(P32)--(P321)--(P3211);
\draw (P321)--(P3212);
\draw (P32)--(P322)--(P3221);
\draw (P322)--(P3222);
\end{tikzpicture}
\end{figure}
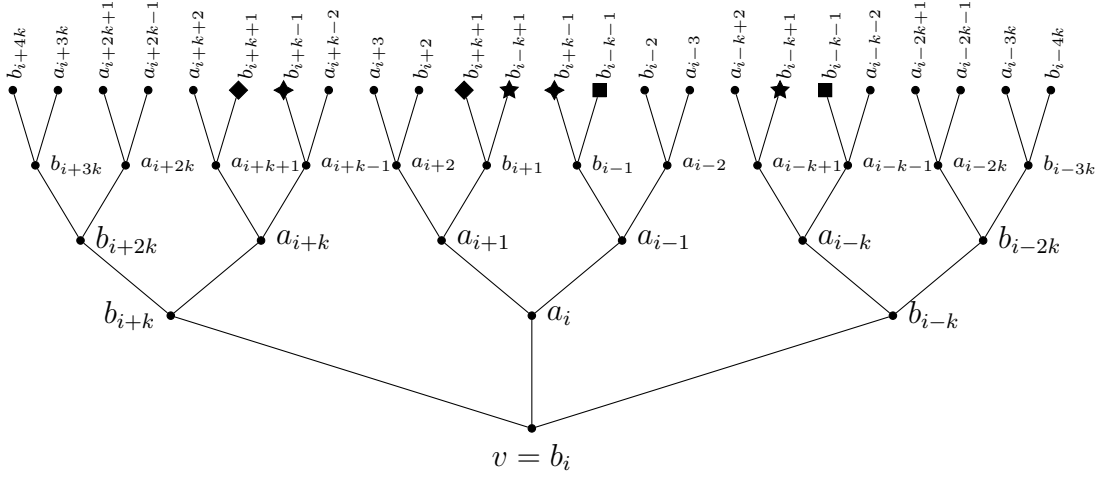

Notice that some vertices at distance $4$ from $v$ are already the same as others. The nodes of identified vertices are illustrated by matching shapes in Figures~\ref{fig:labelA} and~\ref{fig:labelB}. In any generalized Petersen graph, these yield $4$ cycles of length $8$ that include the given vertex $v$. In fact, if $v \in \{a_i,b_i\}$ then each of these cycles includes both $a_i$ and $b_i$. However, no pair of these cycles has for their intersection a path of length $2$ with $v$ at its center (their pairwise intersections all have either $1$ or $3$ edges).  So in order for a $2$-trapped configuration to be possible, some of the other vertices at distance $4$ from $v$ must be identified. 

We make a list of all possible values of $n$ and $k$ that \emph{may} give us the structure we are looking for. These values were found by solving equations modulo $n$ for each possible pair of vertices at distance 4 from $v$ (without pairing vertices in $A$ with vertices in $B$, as they could not possibly be the same). For example, we may have $b_{i-2k+1}=b_{i+2k-1}$, which occurs if $4k-2 \equiv 0 \pmod{n}$. We ignore any solutions that do not meet the definition of a generalized Petersen graph (which requires that $n \ge 5$ and $k < n/2$).

The relationships between $n$ and $k$ that result in additional cycles of length $8$ or less are as follows, when $v \in A$:
\begin{itemize}
\item $k=1,2,3,5$;
\item $k=n-3$ (which can only arise if $n=5$ and $k=2$);
\item $k=n-5$ (which can only arise if $n-5<n/2$, so $n\le 9$);
\item $n=6,8$;
\item $n=2k+i$ where $i \in \{1,2,4\}$;
\item $n=3k+i$ where $i \in \{0, \pm 1, \pm 3\}$;
\item $n=4k+i$ where $i \in \{0, \pm 2\}$;
\item $n=5k\pm 1$ or $n=(5k \pm 1)/2$; and
\item $n=6k$.
\end{itemize}

Comparing this to the values in Table~\ref{tab:girth} and using the smallest value of $k$ from any isomorphism class (from the classes as given in Proposition~\ref{prop:iso}), we see that of these relationships, only the following can arise in generalized Petersen graphs of girth $8$:
\begin{itemize}
\item $k=5$;
\item $n=2k+4$;
\item $n=3k\pm3$; and
\item $n=4k\pm2$.
\end{itemize}

When $v \in B$, the exact same cases arise with the addition of one extra case where $n=8k$. 

Although $n=ak+b$ and $n=ak-b$ produce different graphs, the structures of their cycles of length $8$ as shown in the distance $4$ subgraph from any vertex are identical. Thus, we can illustrate the cycle structure of the cases $n=3k\pm3$ with a single figure, and likewise the cases $n=4k\pm 2$. Additionally, the cycle structure of the $n=3k\pm3$ case is the same regardless of whether $v\in{A}$ or $v\in{B}$, and the cycle structure of $n=2k+4$ where $v\in{A}$ is the same as the cycle structure of $n=4k\pm2$ where $v\in{B}$ and vice versa, so these will also share figures in the following analysis.

The first case we consider is $k=5$ with $v \in A$. Figure~\ref{fig:k=5} shows the structure of the subgraph of vertices at distance at most $4$ from $v$. Cycles of length $8$ whose intersection is a path of length $2$ have been thickened for clarity. In this graph, there are six cycles of length $8$ that can be paired in three different ways to achieve a structure that allows the cops to $2$-trap the robber.
\begin{figure}\caption{$v\in{A}$ and $k=5$}\label{fig:k=5}
\begin{tikzpicture}
\node[label=below:$v$] (P0) at (0,-.5) [circle,draw,fill=black!100, inner sep=1pt, minimum width=1pt] {};
\node (P1) at (-4.8,1) [circle,draw,fill=black!100, inner sep=1pt, minimum width=1pt] {};
\node (P2) at (0,1) [circle,draw,fill=black!100, inner sep=1pt, minimum width=1pt] {};
\node (P3) at (4.8,1) [circle,draw,fill=black!100, inner sep=1pt, minimum width=1pt] {};
\node (P11) at (-6,2) [circle,draw,fill=black!100, inner sep=1pt, minimum width=1pt] {};
\node (P12) at (-3.6,2) [circle,draw,fill=black!100, inner sep=1pt, minimum width=1pt] {};
\node (P21) at (-1.2,2) [circle,draw,fill=black!100, inner sep=1pt, minimum width=1pt] {};
\node (P22) at (1.2,2) [circle,draw,fill=black!100, inner sep=1pt, minimum width=1pt] {};
\node (P31) at (3.6,2) [circle,draw,fill=black!100, inner sep=1pt, minimum width=1pt] {};
\node (P32) at (6,2) [circle,draw,fill=black!100, inner sep=1pt, minimum width=1pt] {};
\node (P111) at (-6.6,3) [circle,draw,fill=black!100, inner sep=1pt, minimum width=1pt] {};
\node (P112) at (-5.4,3) [circle,draw,fill=black!100, inner sep=1pt, minimum width=1pt] {};
\node (P121) at (-4.2,3) [circle,draw,fill=black!100, inner sep=1pt, minimum width=1pt] {};
\node (P122) at (-3,3) [circle,draw,fill=black!100, inner sep=1pt, minimum width=1pt] {};
\node (P211) at (-1.8,3) [circle,draw,fill=black!100, inner sep=1pt, minimum width=1pt] {};
\node (P212) at (-.6,3) [circle,draw,fill=black!100, inner sep=1pt, minimum width=1pt] {};
\node (P221) at (0.6,3) [circle,draw,fill=black!100, inner sep=1pt, minimum width=1pt] {};
\node (P222) at (1.8,3) [circle,draw,fill=black!100, inner sep=1pt, minimum width=1pt] {};
\node (P311) at (3,3) [circle,draw,fill=black!100, inner sep=1pt, minimum width=1pt] {};
\node (P312) at (4.2,3) [circle,draw,fill=black!100, inner sep=1pt, minimum width=1pt] {};
\node (P321) at (5.4,3) [circle,draw,fill=black!100, inner sep=1pt, minimum width=1pt] {};
\node (P322) at (6.6,3) [circle,draw,fill=black!100, inner sep=1pt, minimum width=1pt] {};
\node (P1111) at (-6.9,4) [regular polygon,regular polygon sides=4,draw,fill=black!100, inner sep=2pt, minimum width=2pt] {};
\node (P1112) at (-6.3,4) [regular polygon,regular polygon sides=3,draw,fill=black!100, inner sep=1pt, minimum width=1pt] {};
\node (P1121) at (-5.7,4) [regular polygon,regular polygon sides=3,rotate=180,draw,fill=black!100, inner sep=1pt, minimum width=1pt] {};
\node (P1122) at (-5.1,4) [circle,draw,fill=black!100, inner sep=1pt, minimum width=1pt] {};
\node (P1211) at (-4.5,4) [circle,draw,fill=black!100, inner sep=1pt, minimum width=1pt] {};
\node (P1212) at (-3.9,4)  [diamond,draw,fill=black!100, inner sep=2pt, minimum width=2pt] {};
\node (P1221) at (-3.3,4) [star,star points=4,star point ratio=2,draw,fill=black!100, inner sep=1pt, minimum width=1pt] {};
\node (P1222) at (-2.7,4) [circle,draw,fill=black!100, inner sep=1pt, minimum width=1pt] {};
\node (P2111) at (-2.1,4) [circle,draw,fill=black!100, inner sep=1pt, minimum width=1pt] {};
\node (P2112) at (-1.5,4) [circle,draw,fill=black!100, inner sep=1pt, minimum width=1pt] {};
\node (P2121) at (-.9,4)  [diamond,draw,fill=black!100, inner sep=2pt, minimum width=2pt] {};
\node (P2122) at (-.3,4) [star,star points=5,star point ratio=2,draw,fill=black!100, inner sep=1pt, minimum width=1pt] {};
\node (P2211) at (.3,4)  [star,star points=4,star point ratio=2,draw,fill=black!100, inner sep=1pt, minimum width=1pt] {};
\node (P2212) at (.9,4) [regular polygon,regular polygon sides=4,draw,fill=black!100, inner sep=2pt, minimum width=2pt] {};
\node (P2221) at (1.5,4) [circle,draw,fill=black!100, inner sep=1pt, minimum width=1pt] {};
\node (P2222) at (2.1,4) [circle,draw,fill=black!100, inner sep=1pt, minimum width=1pt] {};
\node (P3111) at (2.7,4) [circle,draw,fill=black!100, inner sep=1pt, minimum width=1pt] {};
\node (P3112) at (3.3,4) [star,star points=5,star point ratio=2,draw,fill=black!100, inner sep=1pt, minimum width=1pt] {};
\node (P3121) at (3.9,4)  [regular polygon,regular polygon sides=4,draw,fill=black!100, inner sep=2pt, minimum width=2pt] {};
\node (P3122) at (4.5,4) [circle,draw,fill=black!100, inner sep=1pt, minimum width=1pt] {};
\node (P3211) at (5.1,4) [circle,draw,fill=black!100, inner sep=1pt, minimum width=1pt] {};
\node (P3212) at (5.7,4) [regular polygon,regular polygon sides=3,draw,fill=black!100, inner sep=1pt, minimum width=1pt] {};
\node (P3221) at (6.3,4) [regular polygon,regular polygon sides=3,rotate=180,draw,fill=black!100, inner sep=1pt, minimum width=1pt] {};
\node (P3222) at (6.9,4)  [diamond,draw,fill=black!100, inner sep=1.75pt, minimum width=1.75pt] {};
\draw (P0)--(P1)--(P11)--(P111)--(P1111);
\draw (P111)--(P1112);
\draw (P11)--(P112)--(P1121);
\draw (P112)--(P1122);
\draw (P1)--(P12)--(P121)--(P1211);
\draw (P121)--(P1212);
\draw (P12)--(P122)--(P1221);
\draw (P122)--(P1222);
\draw (P0)--(P2)--(P21)--(P211)--(P2111);
\draw (P211)--(P2112);
\draw (P21)--(P212)--(P2121);
\draw (P212)--(P2122);
\draw (P2)--(P22)--(P221)--(P2211);
\draw (P221)--(P2212);
\draw (P22)--(P222)--(P2221);
\draw (P222)--(P2222);
\draw (P0)--(P3)--(P31)--(P311)--(P3111);
\draw (P311)--(P3112);
\draw (P31)--(P312)--(P3121);
\draw (P312)--(P3122);
\draw (P3)--(P32)--(P321)--(P3211);
\draw (P321)--(P3212);
\draw (P32)--(P322)--(P3221);
\draw (P322)--(P3222);
\draw[line width=1.2pt] (P0)--(P1)--(P11)--(P111)--(P1111);
\draw[line width=1.2pt] (P0)--(P2)--(P22)--(P221)--(P2212);
\draw[line width=1.2pt] (P0)--(P3)--(P31)--(P312)--(P3121);
\draw[line width=1.2pt] (P0)--(P1)--(P12)--(P121)--(P1212);
\draw[line width=1.2pt] (P0)--(P2)--(P21)--(P212)--(P2121);
\draw[line width=1.2pt] (P0)--(P3)--(P32)--(P322)--(P3222);
\path[dashed] (P1221.north) edge [out=30,in=150] node [right] {} (P2211.north);
\path[dashed, line width=1.2pt] (P1212.north) edge [out=30,in=150] node [right] {} (P2121.north);
\path[dashed, line width=1.2pt] (P2212.north) edge [out=30,in=150] node [right] {} (P3121.north);
\path[dashed] (P2122.north) edge [out=30,in=150] node [right] {} (P3112.north);
\path[dashed, line width=1.2pt] (P1111.north) edge [out=30,in=150] node [right] {} (P2212.north);
\path[dashed, line width=1.2pt] (P2121.north) edge [out=30,in=150] node [right] {} (P3222.north);
\path[dashed] (P1112.north) edge [out=30,in=150] node [right] {} (P3212.north);
\path[dashed] (P1121.south) edge [out=30,in=150] node [right] {} (P3221.south);
\end{tikzpicture}
\end{figure}
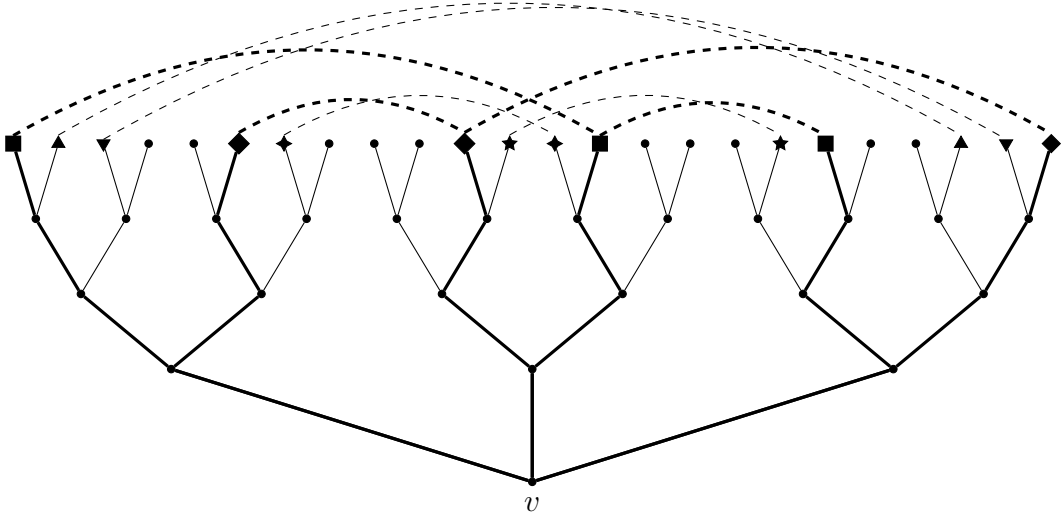

When $k=5$ and $v\in{B}$ we do not find two cycles of length $8$ whose intersection is a path of length $2$. However, 
 this simply means that the cops cannot $2$-trap the robber when the robber is on a $B$ vertex in a graph with $k=5$. Since we have just seen that the cops can $2$-trap the robber on an $A$ vertex, this family of generalized Petersen graphs must be included as possible exceptions.

The next case to consider is $n=2k+4$ with $v\in{A}$, which is shown in Figure~\ref{fig:2k+4A}. As previously noted, this figure also covers the case $n=4k\pm 2$ with $v \in B$. In this case, there are three possible ways to make cycles of length $8$ that intersect in paths of length $2$. One is to use the vertices indicated by the pentagon and the upside down pentagon, another is to use the square and the upside down triangle. The third way (which was not made bold, to avoid too many bold lines) is the reflection of the previous way, where we use the vertices indicated by the triangle and the diamond.

\begin{figure}\caption{Sub tree structure when $v\in{A}$ and $n=2k+4$, and when $v\in{B}$ and $n=4k\pm2$}\label{fig:2k+4A}
\begin{tikzpicture}
\node[label=below:$v$] (P0) at (0,-.5) [circle,draw,fill=black!100, inner sep=1pt, minimum width=1pt] {};
\node (P1) at (-4.8,1) [circle,draw,fill=black!100, inner sep=1pt, minimum width=1pt] {};
\node (P2) at (0,1) [circle,draw,fill=black!100, inner sep=1pt, minimum width=1pt] {};
\node (P3) at (4.8,1) [circle,draw,fill=black!100, inner sep=1pt, minimum width=1pt] {};
\node (P11) at (-6,2) [circle,draw,fill=black!100, inner sep=1pt, minimum width=1pt] {};
\node (P12) at (-3.6,2) [circle,draw,fill=black!100, inner sep=1pt, minimum width=1pt] {};
\node (P21) at (-1.2,2) [circle,draw,fill=black!100, inner sep=1pt, minimum width=1pt] {};
\node (P22) at (1.2,2) [circle,draw,fill=black!100, inner sep=1pt, minimum width=1pt] {};
\node (P31) at (3.6,2) [circle,draw,fill=black!100, inner sep=1pt, minimum width=1pt] {};
\node (P32) at (6,2) [circle,draw,fill=black!100, inner sep=1pt, minimum width=1pt] {};
\node (P111) at (-6.6,3) [circle,draw,fill=black!100, inner sep=1pt, minimum width=1pt] {};
\node (P112) at (-5.4,3) [circle,draw,fill=black!100, inner sep=1pt, minimum width=1pt] {};
\node (P121) at (-4.2,3) [circle,draw,fill=black!100, inner sep=1pt, minimum width=1pt] {};
\node (P122) at (-3,3) [circle,draw,fill=black!100, inner sep=1pt, minimum width=1pt] {};
\node (P211) at (-1.8,3) [circle,draw,fill=black!100, inner sep=1pt, minimum width=1pt] {};
\node (P212) at (-.6,3) [circle,draw,fill=black!100, inner sep=1pt, minimum width=1pt] {};
\node (P221) at (0.6,3) [circle,draw,fill=black!100, inner sep=1pt, minimum width=1pt] {};
\node (P222) at (1.8,3) [circle,draw,fill=black!100, inner sep=1pt, minimum width=1pt] {};
\node (P311) at (3,3) [circle,draw,fill=black!100, inner sep=1pt, minimum width=1pt] {};
\node (P312) at (4.2,3) [circle,draw,fill=black!100, inner sep=1pt, minimum width=1pt] {};
\node (P321) at (5.4,3) [circle,draw,fill=black!100, inner sep=1pt, minimum width=1pt] {};
\node (P322) at (6.6,3) [circle,draw,fill=black!100, inner sep=1pt, minimum width=1pt] {};
\node (P1111) at (-6.9,4) [regular polygon,regular polygon sides=3,draw,fill=black!100, inner sep=1pt, minimum width=1pt] {};
\node (P1112) at (-6.3,4)  [regular polygon,regular polygon sides=5,draw,fill=black!100, inner sep=2pt, minimum width=2pt] {};
\node (P1121) at (-5.7,4) [star,star points=6,star point ratio=2,draw,fill=black!100, inner sep=1pt, minimum width=1pt] {};
\node (P1122) at (-5.1,4) [circle,draw,fill=black!100, inner sep=1pt, minimum width=1pt] {};
\node (P1211) at (-4.5,4)  [regular polygon,regular polygon sides=5,rotate=180,draw,fill=black!100, inner sep=2pt, minimum width=2pt] {};
\node (P1212) at (-3.9,4) [diamond,draw,fill=black!100, inner sep=1.25pt, minimum width=1.25pt] {};
\node (P1221) at (-3.3,4) [star,star points=4,star point ratio=2,draw,fill=black!100, inner sep=1pt, minimum width=1pt] {};
\node (P1222) at (-2.7,4) [circle,draw,fill=black!100, inner sep=1pt, minimum width=1pt] {};
\node (P2111) at (-2.1,4) [circle,draw,fill=black!100, inner sep=1pt, minimum width=1pt] {};
\node (P2112) at (-1.5,4) [regular polygon,regular polygon sides=3,rotate=180,draw,fill=black!100, inner sep=1.5pt, minimum width=1.5pt] {};
\node (P2121) at (-.9,4) [diamond,draw,fill=black!100, inner sep=1.25pt, minimum width=1.25pt] {};
\node (P2122) at (-.3,4) [star,star points=5,star point ratio=2,draw,fill=black!100, inner sep=1pt, minimum width=1pt] {};
\node (P2211) at (.3,4) [star,star points=4,star point ratio=2,draw,fill=black!100, inner sep=1pt, minimum width=1pt] {};
\node (P2212) at (.9,4) [regular polygon,regular polygon sides=4,draw,fill=black!100, inner sep=2pt, minimum width=2pt] {};
\node (P2221) at (1.5,4) [regular polygon,regular polygon sides=3,draw,fill=black!100, inner sep=1pt, minimum width=1pt] {};
\node (P2222) at (2.1,4) [circle,draw,fill=black!100, inner sep=1pt, minimum width=1pt] {};
\node (P3111) at (2.7,4) [circle,draw,fill=black!100, inner sep=1pt, minimum width=1pt] {};
\node (P3112) at (3.3,4) [star,star points=5,star point ratio=2,draw,fill=black!100, inner sep=1pt, minimum width=1pt] {};
\node (P3121) at (3.9,4) [regular polygon,regular polygon sides=4,draw,fill=black!100, inner sep=2pt, minimum width=2pt] {};
\node (P3122) at (4.5,4) [regular polygon,regular polygon sides=5,draw,fill=black!100, inner sep=2pt, minimum width=2pt] {};
\node (P3211) at (5.1,4) [circle,draw,fill=black!100, inner sep=1pt, minimum width=1pt] {};
\node (P3212) at (5.7,4)  [star,star points=6,star point ratio=2,draw,fill=black!100, inner sep=1pt, minimum width=1pt] {};
\node (P3221) at (6.3,4) [regular polygon,regular polygon sides=5,rotate=180,draw,fill=black!100, inner sep=2pt, minimum width=2pt] {};
\node (P3222) at (6.9,4) [regular polygon,regular polygon sides=3,rotate=180,draw,fill=black!100, inner sep=1.5pt, minimum width=1.5pt] {};
\draw (P0)--(P1)--(P11)--(P111)--(P1111);
\draw (P111)--(P1112);
\draw (P11)--(P112)--(P1121);
\draw (P112)--(P1122);
\draw (P1)--(P12)--(P121)--(P1211);
\draw (P121)--(P1212);
\draw (P12)--(P122)--(P1221);
\draw (P122)--(P1222);
\draw (P0)--(P2)--(P21)--(P211)--(P2111);
\draw (P211)--(P2112);
\draw (P21)--(P212)--(P2121);
\draw (P212)--(P2122);
\draw (P2)--(P22)--(P221)--(P2211);
\draw (P221)--(P2212);
\draw (P22)--(P222)--(P2221);
\draw (P222)--(P2222);
\draw (P0)--(P3)--(P31)--(P311)--(P3111);
\draw (P311)--(P3112);
\draw (P31)--(P312)--(P3121);
\draw (P312)--(P3122);
\draw (P3)--(P32)--(P321)--(P3211);
\draw (P321)--(P3212);
\draw (P32)--(P322)--(P3221);
\draw (P322)--(P3222);
\draw[line width=1.2pt] (P0)--(P1)--(P11)--(P111)--(P1112);
\draw[line width=1.2pt] (P0)--(P1)--(P12)--(P121)--(P1211);
\draw[line width=1.2pt] (P0)--(P2)--(P21)--(P211)--(P2112);
\draw[line width=1.2pt] (P0)--(P2)--(P22)--(P221)--(P2212);
\draw[line width=1.2pt] (P0)--(P3)--(P31)--(P312)--(P3121);
\draw[line width=1.2pt] (P0)--(P3)--(P31)--(P312)--(P3122);
\draw[line width=1.2pt] (P0)--(P3)--(P32)--(P322)--(P3221);
\draw[line width=1.2pt] (P0)--(P3)--(P32)--(P322)--(P3222);
\path[dashed] (P1221.north) edge [out=30,in=150] node [right] {} (P2211.north);
\path[dashed] (P1212.north) edge [out=30,in=150] node [right] {} (P2121.north);
\path[dashed, line width=1.2pt] (P2212.north) edge [out=30,in=150] node [right] {} (P3121.north);
\path[dashed] (P2122.north) edge [out=30,in=150] node [right] {} (P3112.north);
\path[dashed, line width=1.2pt] (P1112.north) edge [out=30,in=150] node [right] {} (P3122.north);
\path[dashed, line width=1.2pt] (P1211.south) edge [out=30,in=150] node [right] {} (P3221.south);
\path[dashed] (P1111.north) edge [out=30,in=150] node [right] {} (P2221.north);
\path[dashed, line width=1.2pt] (P2112.south) edge [out=30,in=150] node [right] {} (P3222.south);
\path[dashed] (P1121.north) edge [out=30,in=150] node [right] {} (P3212.north);
\end{tikzpicture}
\end{figure}
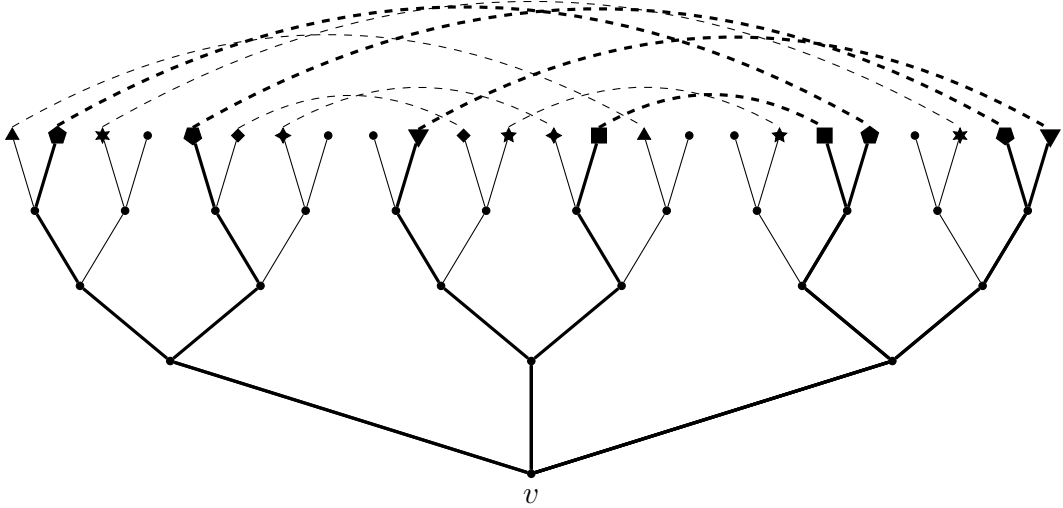

When $n=2k+4$ and $v\in{B}$ or $n=4k\pm 2$ and $v \in A$, the cycle structure is shown in Figure~\ref{fig:2k+4B}. There are two ways to make cycles of length $8$ that intersect in paths of length $2$. We can use the vertices indicated by the triangle and the diamond, or we can use the vertices indicated by the upside down triangle and the square. 

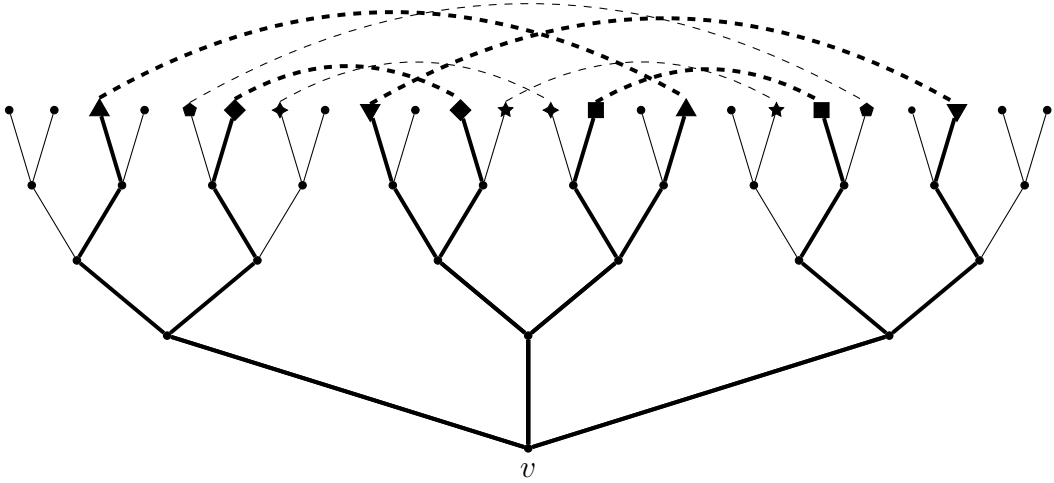
\begin{figure}\caption{Sub tree structure when $v\in{B}$ and $n=2k+4$, and when $v\in{A}$ and $n=4k\pm2$}\label{fig:2k+4B}
\begin{tikzpicture}
\node[label=below:$v$] (P0) at (0,-.5) [circle,draw,fill=black!100, inner sep=1pt, minimum width=1pt] {};
\node (P1) at (-4.8,1) [circle,draw,fill=black!100, inner sep=1pt, minimum width=1pt] {};
\node (P2) at (0,1) [circle,draw,fill=black!100, inner sep=1pt, minimum width=1pt] {};
\node (P3) at (4.8,1) [circle,draw,fill=black!100, inner sep=1pt, minimum width=1pt] {};
\node (P11) at (-6,2) [circle,draw,fill=black!100, inner sep=1pt, minimum width=1pt] {};
\node (P12) at (-3.6,2) [circle,draw,fill=black!100, inner sep=1pt, minimum width=1pt] {};
\node (P21) at (-1.2,2) [circle,draw,fill=black!100, inner sep=1pt, minimum width=1pt] {};
\node (P22) at (1.2,2) [circle,draw,fill=black!100, inner sep=1pt, minimum width=1pt] {};
\node (P31) at (3.6,2) [circle,draw,fill=black!100, inner sep=1pt, minimum width=1pt] {};
\node (P32) at (6,2) [circle,draw,fill=black!100, inner sep=1pt, minimum width=1pt] {};
\node (P111) at (-6.6,3) [circle,draw,fill=black!100, inner sep=1pt, minimum width=1pt] {};
\node (P112) at (-5.4,3) [circle,draw,fill=black!100, inner sep=1pt, minimum width=1pt] {};
\node (P121) at (-4.2,3) [circle,draw,fill=black!100, inner sep=1pt, minimum width=1pt] {};
\node (P122) at (-3,3) [circle,draw,fill=black!100, inner sep=1pt, minimum width=1pt] {};
\node (P211) at (-1.8,3) [circle,draw,fill=black!100, inner sep=1pt, minimum width=1pt] {};
\node (P212) at (-.6,3) [circle,draw,fill=black!100, inner sep=1pt, minimum width=1pt] {};
\node (P221) at (0.6,3) [circle,draw,fill=black!100, inner sep=1pt, minimum width=1pt] {};
\node (P222) at (1.8,3) [circle,draw,fill=black!100, inner sep=1pt, minimum width=1pt] {};
\node (P311) at (3,3) [circle,draw,fill=black!100, inner sep=1pt, minimum width=1pt] {};
\node (P312) at (4.2,3) [circle,draw,fill=black!100, inner sep=1pt, minimum width=1pt] {};
\node (P321) at (5.4,3) [circle,draw,fill=black!100, inner sep=1pt, minimum width=1pt] {};
\node (P322) at (6.6,3) [circle,draw,fill=black!100, inner sep=1pt, minimum width=1pt] {};
\node (P1111) at (-6.9,4) [circle,draw,fill=black!100, inner sep=1pt, minimum width=1pt] {};
\node (P1112) at (-6.3,4) [circle,draw,fill=black!100, inner sep=1pt, minimum width=1pt] {};
\node (P1121) at (-5.7,4) [regular polygon,regular polygon sides=3,draw,fill=black!100, inner sep=1.5pt, minimum width=1.5pt] {};
\node (P1122) at (-5.1,4) [circle,draw,fill=black!100, inner sep=1pt, minimum width=1pt] {};
\node (P1211) at (-4.5,4) [regular polygon,regular polygon sides=5,draw,fill=black!100, inner sep=1.5pt, minimum width=1.5pt] {};
\node (P1212) at (-3.9,4) [diamond,draw,fill=black!100, inner sep=2pt, minimum width=2pt] {};
\node (P1221) at (-3.3,4) [star,star points=4,star point ratio=2,draw,fill=black!100, inner sep=1pt, minimum width=1pt] {};
\node (P1222) at (-2.7,4) [circle,draw,fill=black!100, inner sep=1pt, minimum width=1pt] {};
\node (P2111) at (-2.1,4) [regular polygon,regular polygon sides=3,rotate=180,draw,fill=black!100, inner sep=1.5pt, minimum width=1.5pt] {};
\node (P2112) at (-1.5,4) [circle,draw,fill=black!100, inner sep=1pt, minimum width=1pt] {};
\node (P2121) at (-.9,4) [diamond,draw,fill=black!100, inner sep=2pt, minimum width=2pt] {};
\node (P2122) at (-.3,4) [star,star points=5,star point ratio=2,draw,fill=black!100, inner sep=1pt, minimum width=1pt] {};
\node (P2211) at (.3,4) [star,star points=4,star point ratio=2,draw,fill=black!100, inner sep=1pt, minimum width=1pt] {};
\node (P2212) at (.9,4) [regular polygon,regular polygon sides=4,draw,fill=black!100, inner sep=2pt, minimum width=2pt] {};
\node (P2221) at (1.5,4) [circle,draw,fill=black!100, inner sep=1pt, minimum width=1pt] {};
\node (P2222) at (2.1,4) [regular polygon,regular polygon sides=3,draw,fill=black!100, inner sep=1.5pt, minimum width=1.5pt] {};
\node (P3111) at (2.7,4) [circle,draw,fill=black!100, inner sep=1pt, minimum width=1pt] {};
\node (P3112) at (3.3,4) [star,star points=5,star point ratio=2,draw,fill=black!100, inner sep=1pt, minimum width=1pt] {};
\node (P3121) at (3.9,4) [regular polygon,regular polygon sides=4,draw,fill=black!100, inner sep=2pt, minimum width=2pt] {};
\node (P3122) at (4.5,4) [regular polygon,regular polygon sides=5,draw,fill=black!100, inner sep=1.5pt, minimum width=1.5pt] {};
\node (P3211) at (5.1,4) [circle,fill=black!100, inner sep=1pt, minimum width=1pt] {};
\node (P3212) at (5.7,4) [regular polygon,regular polygon sides=3,rotate=180,draw,fill=black!100, inner sep=1.5pt, minimum width=1.5pt] {};
\node (P3221) at (6.3,4) [circle,draw,fill=black!100, inner sep=1pt, minimum width=1pt] {};
\node (P3222) at (6.9,4) [circle,draw,fill=black!100, inner sep=1pt, minimum width=1pt] {};
\draw (P0)--(P1)--(P11)--(P111)--(P1111);
\draw (P111)--(P1112);
\draw (P11)--(P112)--(P1121);
\draw (P112)--(P1122);
\draw (P1)--(P12)--(P121)--(P1211);
\draw (P121)--(P1212);
\draw (P12)--(P122)--(P1221);
\draw (P122)--(P1222);
\draw (P0)--(P2)--(P21)--(P211)--(P2111);
\draw (P211)--(P2112);
\draw (P21)--(P212)--(P2121);
\draw (P212)--(P2122);
\draw (P2)--(P22)--(P221)--(P2211);
\draw (P221)--(P2212);
\draw (P22)--(P222)--(P2221);
\draw (P222)--(P2222);
\draw (P0)--(P3)--(P31)--(P311)--(P3111);
\draw (P311)--(P3112);
\draw (P31)--(P312)--(P3121);
\draw (P312)--(P3122);
\draw (P3)--(P32)--(P321)--(P3211);
\draw (P321)--(P3212);
\draw (P32)--(P322)--(P3221);
\draw (P322)--(P3222);
\draw[line width=1.5pt] (P0)--(P1)--(P11)--(P112)--(P1121);
\draw[line width=1.5pt] (P0)--(P1)--(P12)--(P121)--(P1212);
\draw[line width=1.5pt] (P0)--(P2)--(P21)--(P211)--(P2111);
\draw[line width=1.5pt] (P0)--(P2)--(P21)--(P212)--(P2121);
\draw[line width=1.5pt] (P0)--(P2)--(P22)--(P221)--(P2212);
\draw[line width=1.5pt] (P0)--(P2)--(P22)--(P222)--(P2222);
\draw[line width=1.5pt] (P0)--(P3)--(P31)--(P312)--(P3121);
\draw[line width=1.5pt] (P0)--(P3)--(P32)--(P321)--(P3212);
\path[dashed] (P1221.north) edge [out=30,in=150] node [right] {} (P2211.north);
\path[dashed,line width=1.5pt] (P1212.north) edge [out=30,in=150] node [right] {} (P2121.north);
\path[dashed,line width=1.5pt] (P2212.north) edge [out=30,in=150] node [right] {} (P3121.north);
\path[dashed] (P2122.north) edge [out=30,in=150] node [right] {} (P3112.north);
\path[dashed,line width=1.5pt] (P1121.north) edge [out=30,in=150] node [right] {} (P2222.north);
\path[dashed,line width=1.5pt] (P2111.south) edge [out=30,in=150] node [right] {} (P3212.south);
\path[dashed] (P1211.north) edge [out=30,in=150] node [right] {} (P3122.north);
\end{tikzpicture}
\end{figure}

Our next case is $n=3k\pm3$, shown in Figure~\ref{fig:3k-3}. Whether $v$ is in $A$ or $B$ makes no difference to the structure of the cycles. In this case there are three ways to make cycles of length $8$ that intersect in paths of length $2$. We can use the vertices indicated by the pentagon and the upside down pentagon, or we can use the vertices indicated by the triangle and diamond. We can also use the vertices indicated by the upside down triangle and square, but as in the previous figure, this has not been made bold to avoid making almost the whole figure bold.

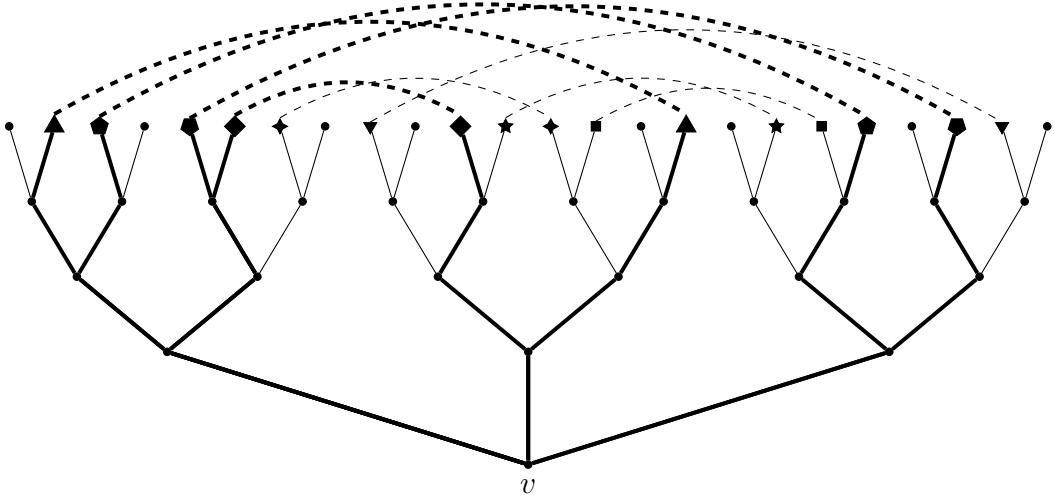
\begin{figure}\caption{$n=3k\pm3$}\label{fig:3k-3}
\begin{tikzpicture}
\node[label=below:$v$] (P0) at (0,-.5) [circle,draw,fill=black!100, inner sep=1pt, minimum width=1pt] {};
\node (P1) at (-4.8,1) [circle,draw,fill=black!100, inner sep=1pt, minimum width=1pt] {};
\node (P2) at (0,1) [circle,draw,fill=black!100, inner sep=1pt, minimum width=1pt] {};
\node (P3) at (4.8,1) [circle,draw,fill=black!100, inner sep=1pt, minimum width=1pt] {};
\node (P11) at (-6,2) [circle,draw,fill=black!100, inner sep=1pt, minimum width=1pt] {};
\node (P12) at (-3.6,2) [circle,draw,fill=black!100, inner sep=1pt, minimum width=1pt] {};
\node (P21) at (-1.2,2) [circle,draw,fill=black!100, inner sep=1pt, minimum width=1pt] {};
\node (P22) at (1.2,2) [circle,draw,fill=black!100, inner sep=1pt, minimum width=1pt] {};
\node (P31) at (3.6,2) [circle,draw,fill=black!100, inner sep=1pt, minimum width=1pt] {};
\node (P32) at (6,2) [circle,draw,fill=black!100, inner sep=1pt, minimum width=1pt] {};
\node (P111) at (-6.6,3) [circle,draw,fill=black!100, inner sep=1pt, minimum width=1pt] {};
\node (P112) at (-5.4,3) [circle,draw,fill=black!100, inner sep=1pt, minimum width=1pt] {};
\node (P121) at (-4.2,3) [circle,draw,fill=black!100, inner sep=1pt, minimum width=1pt] {};
\node (P122) at (-3,3) [circle,draw,fill=black!100, inner sep=1pt, minimum width=1pt] {};
\node (P211) at (-1.8,3) [circle,draw,fill=black!100, inner sep=1pt, minimum width=1pt] {};
\node (P212) at (-.6,3) [circle,draw,fill=black!100, inner sep=1pt, minimum width=1pt] {};
\node (P221) at (0.6,3) [circle,draw,fill=black!100, inner sep=1pt, minimum width=1pt] {};
\node (P222) at (1.8,3) [circle,draw,fill=black!100, inner sep=1pt, minimum width=1pt] {};
\node (P311) at (3,3) [circle,draw,fill=black!100, inner sep=1pt, minimum width=1pt] {};
\node (P312) at (4.2,3) [circle,draw,fill=black!100, inner sep=1pt, minimum width=1pt] {};
\node (P321) at (5.4,3) [circle,draw,fill=black!100, inner sep=1pt, minimum width=1pt] {};
\node (P322) at (6.6,3) [circle,draw,fill=black!100, inner sep=1pt, minimum width=1pt] {};
\node (P1111) at (-6.9,4) [circle,draw,fill=black!100, inner sep=1pt, minimum width=1pt] {};
\node (P1112) at (-6.3,4) [regular polygon,regular polygon sides=3,draw,fill=black!100, inner sep=1.5pt, minimum width=1.5pt] {};
\node (P1121) at (-5.7,4)  [regular polygon,regular polygon sides=5,draw,fill=black!100, inner sep=2pt, minimum width=2pt] {};
\node (P1122) at (-5.1,4) [circle,draw,fill=black!100, inner sep=1pt, minimum width=1pt] {};
\node (P1211) at (-4.5,4)  [regular polygon,regular polygon sides=5,rotate=180,draw,fill=black!100, inner sep=2pt, minimum width=2pt] {};
\node (P1212) at (-3.9,4) [diamond,draw,fill=black!100, inner sep=2pt, minimum width=2pt] {};
\node (P1221) at (-3.3,4) [star,star points=4,star point ratio=2,draw,fill=black!100, inner sep=1pt, minimum width=1pt] {};
\node (P1222) at (-2.7,4) [circle,draw,fill=black!100, inner sep=1pt, minimum width=1pt] {};
\node (P2111) at (-2.1,4) [regular polygon,regular polygon sides=3,rotate=180,draw,fill=black!100, inner sep=1pt, minimum width=1pt] {};
\node (P2112) at (-1.5,4) [circle,draw,fill=black!100, inner sep=1pt, minimum width=1pt] {};
\node (P2121) at (-.9,4) [diamond,draw,fill=black!100, inner sep=2pt, minimum width=2pt] {};
\node (P2122) at (-.3,4) [star,star points=5,star point ratio=2,draw,fill=black!100, inner sep=1pt, minimum width=1pt] {};
\node (P2211) at (.3,4) [star,star points=4,star point ratio=2,draw,fill=black!100, inner sep=1pt, minimum width=1pt] {};
\node (P2212) at (.9,4) [regular polygon,regular polygon sides=4,draw,fill=black!100, inner sep=1.25pt, minimum width=1.25pt] {};
\node (P2221) at (1.5,4) [circle,draw,fill=black!100, inner sep=1pt, minimum width=1pt] {};
\node (P2222) at (2.1,4) [regular polygon,regular polygon sides=3,draw,fill=black!100, inner sep=1.5pt, minimum width=1.5pt] {};
\node (P3111) at (2.7,4) [circle,draw,fill=black!100, inner sep=1pt, minimum width=1pt] {};
\node (P3112) at (3.3,4) [star,star points=5,star point ratio=2,draw,fill=black!100, inner sep=1pt, minimum width=1pt] {};
\node (P3121) at (3.9,4) [regular polygon,regular polygon sides=4,draw,fill=black!100, inner sep=1.25pt, minimum width=1.25pt] {};
\node (P3122) at (4.5,4)  [regular polygon,regular polygon sides=5,draw,fill=black!100, inner sep=2pt, minimum width=2pt] {};
\node (P3211) at (5.1,4) [circle,draw,fill=black!100, inner sep=1pt, minimum width=1pt] {};
\node (P3212) at (5.7,4)  [regular polygon,regular polygon sides=5,rotate=180,draw,fill=black!100, inner sep=2pt, minimum width=2pt] {};
\node (P3221) at (6.3,4) [regular polygon,regular polygon sides=3,rotate=180,draw,fill=black!100, inner sep=1pt, minimum width=1pt] {};
\node (P3222) at (6.9,4) [circle,draw,fill=black!100, inner sep=1pt, minimum width=1pt] {};
\draw (P0)--(P1)--(P11)--(P111)--(P1111);
\draw (P111)--(P1112);
\draw (P11)--(P112)--(P1121);
\draw (P112)--(P1122);
\draw (P1)--(P12)--(P121)--(P1211);
\draw (P121)--(P1212);
\draw (P12)--(P122)--(P1221);
\draw (P122)--(P1222);
\draw (P0)--(P2)--(P21)--(P211)--(P2111);
\draw (P211)--(P2112);
\draw (P21)--(P212)--(P2121);
\draw (P212)--(P2122);
\draw (P2)--(P22)--(P221)--(P2211);
\draw (P221)--(P2212);
\draw (P22)--(P222)--(P2221);
\draw (P222)--(P2222);
\draw (P0)--(P3)--(P31)--(P311)--(P3111);
\draw (P311)--(P3112);
\draw (P31)--(P312)--(P3121);
\draw (P312)--(P3122);
\draw (P3)--(P32)--(P321)--(P3211);
\draw (P321)--(P3212);
\draw (P32)--(P322)--(P3221);
\draw (P322)--(P3222);
\draw[line width=1.5pt] (P0)--(P1)--(P11)--(P111)--(P1112);
\draw[line width=1.5pt] (P0)--(P1)--(P11)--(P112)--(P1121);
\draw[line width=1.5pt] (P0)--(P1)--(P12)--(P121)--(P1211);
\draw[line width=1.5pt] (P0)--(P1)--(P12)--(P121)--(P1212);
\draw[line width=1.5pt] (P0)--(P2)--(P21)--(P212)--(P2121);
\draw[line width=1.5pt] (P0)--(P2)--(P22)--(P222)--(P2222);
\draw[line width=1.5pt] (P0)--(P3)--(P31)--(P312)--(P3122);
\draw[line width=1.5pt] (P0)--(P3)--(P32)--(P321)--(P3212);
\path[dashed] (P1221.north) edge [out=30,in=150] node [right] {} (P2211.north);
\path[dashed,line width=1.5pt] (P1212.north) edge [out=30,in=150] node [right] {} (P2121.north);
\path[dashed] (P2212.north) edge [out=30,in=150] node [right] {} (P3121.north);
\path[dashed] (P2122.north) edge [out=30,in=150] node [right] {} (P3112.north);
\path[dashed,line width=1.5pt] (P1121.north) edge [out=30,in=150] node [right] {} (P3122.north);
\path[dashed,line width=1.5pt] (P1211.south) edge [out=30,in=150] node [right] {} (P3212.south);
\path[dashed,line width=1.5pt] (P1112.north) edge [out=30,in=150] node [right] {} (P2222.north);
\path[dashed] (P2111.south) edge [out=30,in=150] node [right] {} (P3221.south);
\end{tikzpicture}
\end{figure}

The final case to consider is the $n=8k$ case, which only results in extra identified vertices at distance $4$ from $v$ if $v\in{B}$. However, this case does not result in any cycles of length $8$ whose intersection is a path of length $2$.

With the above analysis, we have proved the following lemma. Its corollaries follow immediately from combining this lemma with Theorem~\ref{thm:main}.

\begin{lem}
Suppose that $GP(n,k)$ has girth $8$, and includes two cycles of length $8$ whose intersection is a path of length $2$. Then up to isomorphism, its parameters have one of the following forms:
\begin{itemize}
		\item $k=5$;
		\item $n=2k+4$;
		\item $n=3k\pm3$; or
		\item $n=4k\pm2$.
\end{itemize}
\end{lem}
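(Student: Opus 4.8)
The plan is to fix a single vertex $v$ that is to serve as the centre of the length-$2$ path shared by the two $8$-cycles, and to read off every $8$-cycle through $v$ from the labelled breadth-first tree rooted at $v$. Since the girth is $8$, the ball of radius $3$ about $v$ is a genuine tree, so any $8$-cycle through $v$ consists of two internally disjoint length-$4$ paths from $v$ meeting at a common antipodal vertex at distance $4$. Hence the $8$-cycles through $v$ are detected \emph{exactly} by the coincidences among the $24$ positions at distance $4$ (some of which are forced to be identified). Because $v$ may lie in $A$ or in $B$, I would carry out the labelling in both cases, recording all the subscripts at distance $4$ as in Figures~\ref{fig:labelA} and~\ref{fig:labelB}.

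First I would isolate the coincidences present in \emph{every} generalized Petersen graph: four pairs of distance-$4$ vertices are forced to agree, giving four $8$-cycles through $v$, each of which also runs through the rung at $v$. A direct check on these four cycles shows that any two of them meet in a path of length $1$ or $3$, never length $2$, so they cannot by themselves produce the configuration sought. Consequently a length-$2$ intersection centred at $v$ can occur only if some \emph{additional} pair of distance-$4$ vertices is identified.

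The enumeration step is then mechanical: for each unordered pair of distance-$4$ vertices lying in the same part (an $A$-vertex can never equal a $B$-vertex), I would equate their subscripts modulo $n$ and solve the resulting linear congruence in $n$ and $k$ (for example $b_{i-2k+1}=b_{i+2k-1}$ forces $4k-2\equiv 0\pmod n$). Discarding solutions violating $n\ge 5$ or $k<n/2$ leaves the master list of relations displayed just before the statement. The decisive filtering uses Table~\ref{tab:girth} together with Proposition~\ref{prop:iso}: relations such as $n=2k+1$, $n=2k+2$, $n=3k$, $n=4k$ and $n=6k$ are girth-$\le 7$ families and are excluded by the girth-$8$ hypothesis, while $n=3k\pm1$ is isomorphic to $GP(n,3)$ (girth $6$) and $n=5k\pm1$, $n=(5k\pm1)/2$ collapse onto $GP(n,5)$, that is, onto the $k=5$ family. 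This is precisely what separates the survivors $k=5$, $n=2k+4$, $n=3k\pm3$, $n=4k\pm2$ from the rest; in particular $n=3k\pm3$ survives while $n=3k\pm1$ does not. The single extra relation $n=8k$, which arises only for $v\in B$, I would dispose of by observing that the identification it forces merely recreates the inner $8$-cycle, which is the \emph{only} $8$-cycle through the corresponding length-$2$ path, so no second cycle is available to intersect it appropriately.

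I expect the main obstacle to be the completeness and correctness of this case analysis: guaranteeing that every distance-$4$ coincidence has been found and translated into the right congruence, and—more delicately—that the isomorphism reduction of Proposition~\ref{prop:iso} is applied consistently, since it is exactly the $3k\pm1$ versus $3k\pm3$ and the $5k\pm1$ versus $k=5$ distinctions that pin down the final list. To close the argument (and to confirm that the four families are genuinely attained, rather than merely an over-approximation), for each surviving family I would exhibit, using the explicit cycle structures in Figures~\ref{fig:k=5}, \ref{fig:2k+4A}, \ref{fig:2k+4B} and~\ref{fig:3k-3}, a concrete pair of $8$-cycles meeting in a path of length $2$, so that the necessary condition extracted above is also realized.
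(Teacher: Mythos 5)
Your proposal is correct and follows essentially the same route as the paper: fix the centre vertex $v$ of the shared path, label the distance-$4$ ball for $v\in A$ and $v\in B$ as in Figures~\ref{fig:labelA} and~\ref{fig:labelB}, note that the four ever-present $8$-cycles pairwise intersect in paths of length $1$ or $3$, enumerate the congruences forcing additional distance-$4$ coincidences, filter using Table~\ref{tab:girth} and Proposition~\ref{prop:iso}, and dispose of the $n=8k$ case separately. Your explicit reason for excluding $n=8k$ (the forced identification only recreates the inner $8$-cycle, and no other $8$-cycle through $v$ uses both inner edges at $v$) is in fact slightly more detailed than the paper's bare assertion of that case.
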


\begin{cor}\label{cor:girth8}
Suppose that a generalized Petersen graph $GP(n,k)$ has girth 8 and is presented with $k$ as small as possible in its isomorphism class. Then the cop number $c(GP(n,k))=4$ except possibly if one of the following is true:
	\begin{itemize}
		\item $k=5$;
		\item $n=2k+4$;
		\item $n=3k\pm3$; or
		\item $n=4k\pm2$.
	\end{itemize}
\end{cor}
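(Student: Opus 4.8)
The plan is to reduce the existence of two cycles of length $8$ meeting in a path of length $2$ to a finite collection of congruences in $n$ and $k$, and then to prune the resulting list using the girth-$8$ hypothesis together with the isomorphism classification of Proposition~\ref{prop:iso}. The classification being claimed is a necessity statement, so it suffices to enumerate exhaustively every way such a configuration can arise and check that each forces $(n,k)$ into one of the four listed families.

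First I would fix the central vertex $v$ of the path of length $2$ and expand the ball of radius $4$ around $v$ as a labelled tree, separately in the two cases $v\in A$ and $v\in B$ (these are exactly Figures~\ref{fig:labelA} and~\ref{fig:labelB}). Because the girth is at least $8$, this ball is a genuine tree except for coincidences among the twenty-four vertices at distance exactly $4$; consequently every cycle of length $8$ through $v$ arises by identifying two of these distance-$4$ vertices, its antipodal vertex. I would first record the ``automatic'' identifications that hold for all $n$ and $k$ (the matching shapes in the figures): these produce the four $8$-cycles present in every $GP(n,k)$, but I would verify that each pair of them meets $v$ in a common subpath of length $1$ or $3$, never $2$. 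Hence any intersection that is a path of length exactly $2$ must be created by a \emph{further} identification of distance-$4$ vertices.

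Next I would enumerate every pair of distance-$4$ vertices of the same type (an $A$-vertex with an $A$-vertex, a $B$-vertex with a $B$-vertex, since the two parts are disjoint and can never coincide) and write down the congruence modulo $n$ asserting equality of their indices; for instance $b_{i-2k+1}=b_{i+2k-1}$ gives $4k-2\equiv 0\pmod n$. Solving each congruence and discarding solutions that violate $n\ge 5$ or $k<n/2$ produces the long preliminary list already displayed in the text ($k\in\{1,2,3,5\}$, $k=n-3$, $k=n-5$, $n\in\{6,8\}$, $n=2k+i$, $n=3k+i$, $n=4k+i$, $n=5k\pm1$, $n=(5k\pm1)/2$, $n=6k$, and for $v\in B$ additionally $n=8k$). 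The arithmetic here is routine; the only care required is to confirm in each case that the new identification creates a second cycle sharing \emph{exactly} the path $e_2$--$e_3$ with a cycle already present, rather than a longer or shorter common subpath.

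Finally I would prune. Comparing against Table~\ref{tab:girth} eliminates every relation whose girth falls below $8$ (for example $k\in\{1,2,3\}$, $n=3k$, $n=6k$, $n=2k+2$, and $n=2k+3$), and applying Proposition~\ref{prop:iso} collapses the survivors onto the representative with smallest $k$ in its isomorphism class. Crucially, the relations $n=5k\pm1$ and $n=(5k\pm1)/2$ all give $5k\equiv\pm1\pmod n$, so $GP(n,k)\cong GP(n,5)$ and they fold into the $k=5$ family, while $n=8k$ turns out to yield no path-of-length-$2$ intersection at all and is simply discarded. What remains is precisely $k=5$, $n=2k+4$, $n=3k\pm3$, and $n=4k\pm2$. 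I expect the main obstacle to lie in this final bookkeeping: one must ensure that the girth and isomorphism reductions are applied consistently, that no congruence has been overlooked, and that merged families have genuinely identical cycle structure. That each surviving family does in fact exhibit the required configuration is then read off directly from Figures~\ref{fig:k=5}, \ref{fig:2k+4A}, \ref{fig:2k+4B}, and~\ref{fig:3k-3}.
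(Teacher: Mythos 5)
Your proposal is correct and takes essentially the same route as the paper: the paper establishes exactly this classification (the lemma closing Section~\ref{sec:numco}) by expanding the radius-$4$ tree around the central vertex of the path, solving the congruences forced by identifying pairs of distance-$4$ vertices of the same type, pruning the list against Table~\ref{tab:girth} and Proposition~\ref{prop:iso}, and discarding $n=8k$ by direct inspection, just as you describe. The only step you leave tacit is the final one---invoking Theorem~\ref{thm:main} (whose statement already includes the upper bound $c\le 4$ of Ball et al.) to turn the absence of two $8$-cycles meeting in a path of length $2$ into $c(GP(n,k))=4$---which is precisely how the paper deduces the corollary from that lemma.
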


Using the information from Table~\ref{tab:girth}, we can replace our hypothesis about the girth by adding additional parameters to the forbidden list.

\begin{cor}\label{cor:final}
Suppose that a generalized Petersen graph $G(n,k)$ is presented with $k$ as small as possible in its isomorphism class. Then the cop number $c(GP(n,k))=4$ except possibly if one of the following is true:
\begin{itemize}
\item $k \in \{1,2,3,4,5\}$;
\item $n=2k+i$ with $i \in \{2,3,4\}$;
\item $n=3k +i$ with $i \in \{0, \pm 2, \pm 3\}$;
\item $n=4k +i$ with $i \in \{0, \pm 2\}$;
\item $n=5k/i$ with $i \in \{1,2\}$;
\item $n=6k$; or
\item $n=7k/i$ with $i \in \{1,2,3\}$.	
\end{itemize}

\end{cor}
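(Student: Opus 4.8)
The plan is to obtain Corollary~\ref{cor:final} by stripping the girth hypothesis from Corollary~\ref{cor:girth8}, compensating by importing into the exception list every parameter family whose girth is not $8$. Corollary~\ref{cor:girth8} already settles the girth-$8$ case: it guarantees $c(GP(n,k))=4$ unless the parameters lie in one of the four families $k=5$, $n=2k+4$, $n=3k\pm3$, or $n=4k\pm2$. Consequently the only sets of parameters that Theorem~\ref{thm:main} does not resolve are those already named in Corollary~\ref{cor:girth8} together with those for which the girth is strictly less than $8$.

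First I would read off from Table~\ref{tab:girth} the complete list of parameter families (with $k$ minimal in its isomorphism class, as in Proposition~\ref{prop:iso}) whose girth is $3$, $4$, $5$, $6$, or $7$. These are $n=3k$ (girth $3$); $k=1$ and $n=4k$ (girth $4$); $k=2$, $n=5k$, and $n=5k/2$ (girth $5$); $k=3$, $n=6k$, and $n=2k+2$ (girth $6$); and $k=4$, $n=7k$, $n=7k/2$, $n=7k/3$, $n=2k+3$, and $n=3k\pm2$ (girth $7$). Every pair $(n,k)$ outside both this list and the four girth-$8$ exceptions therefore has girth exactly $8$ and escapes all four exceptional families, so Corollary~\ref{cor:girth8} applies and yields $c(GP(n,k))=4$.

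Next I would form the union of these families with the four girth-$8$ exceptions and regroup the result into the compact list in the statement. This step is pure bookkeeping: the condition $k\in\{1,2,3,4,5\}$ absorbs $k=1,2,3,4$ (from the table) and $k=5$ (from Corollary~\ref{cor:girth8}); the condition $n=2k+i$ with $i\in\{2,3,4\}$ absorbs $n=2k+2$, $n=2k+3$, and $n=2k+4$; the condition $n=3k+i$ with $i\in\{0,\pm2,\pm3\}$ absorbs $n=3k$, $n=3k\pm2$, and $n=3k\pm3$; the condition $n=4k+i$ with $i\in\{0,\pm2\}$ absorbs $n=4k$ and $n=4k\pm2$; the conditions $n=5k/i$ with $i\in\{1,2\}$ and $n=7k/i$ with $i\in\{1,2,3\}$ absorb the girth-$5$ and girth-$7$ fractional families; and $n=6k$ is carried over directly.

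The only point requiring care, and hence the main obstacle, is \emph{completeness}: I must verify that no girth-$\le 7$ family from the table has been dropped and that no girth-$8$ exception has been lost, so that every pair $(n,k)$ not appearing in the final list genuinely has girth $8$ and avoids all four exceptional families, letting Corollary~\ref{cor:girth8} conclude $c(GP(n,k))=4$. Since both source lists are finite and fully explicit, this reduces to a direct comparison against the regrouped conditions above, and no new graph-theoretic argument is needed.
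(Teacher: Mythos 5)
Your proposal is correct and coincides with the paper's own derivation: the paper obtains Corollary~\ref{cor:final} from Corollary~\ref{cor:girth8} precisely by using Table~\ref{tab:girth} to replace the girth-$8$ hypothesis with the additional parameter families of girth less than $8$. Your bookkeeping (merging $k=1,2,3,4$ with $k=5$; $n=2k+2$, $n=2k+3$ with $n=2k+4$; $n=3k$, $n=3k\pm2$ with $n=3k\pm3$; $n=4k$ with $n=4k\pm2$; and carrying over the $5k/i$, $6k$, and $7k/i$ families) reproduces the paper's exception list exactly.
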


Our results do not guarantee the cop number is 3 or less in any of these cases; indeed, from Table~\ref{tab:table2} we know that there are generalized Petersen graphs with girth $7$ (which are therefore included in the above list) that also have cop number $4$.  All we really know (except where other results apply, such as the cop number when $k=1$) about the cases that are listed as exceptional in Corollary~\ref{cor:final} is that they either have girth less than $8$, or they admit a configuration in which the cops could theoretically $2$-trap the robber.
 
For generalized Petersen graphs with $n \le 40$, the cop numbers are known \cite{ball}. (Although a complete list does not appear in \cite{ball}, the information can be completed using their algorithm \cite{bell}.) We can also find the cop number for all generalized Petersen graphs with $n \le 30$ in \cite{burgess}. While it might be of interest to provide more information about all of these graphs, for the sake of brevity we mention only some highlights here. A table that includes all girths is included as an appendix in the ar$\chi$iv version of this paper. 

None of the graphs of girth $8$ that appear in Table~\ref{tab:table2} have parameters that satisfy any of the conditions of Corollary~\ref{cor:girth8}. So it is possible (though we would not dare to conjecture it) that our result explains all graphs of girth $8$ that have cop number $4$. 

The three graphs of girth $7$ that have cop number $4$ are precisely the graphs whose parameters have the form $n=7k/i$ where $i \in \{2,3\}$. It seems likely that this family of parameters results in different behaviour with respect to the game of cops and robbers, than the other families of parameters that give rise to graphs of girth $7$. 

As mentioned previously, a generalized Petersen graph cannot have cop number $1$, and can only have cop number $2$ if its girth is $3$ or $4$. Thus, if $n \notin \{3k,4k\}$ and $k\neq 1$, then $c(GP(n,k)) \in \{3,4\}$.
All of the generalized Petersen graphs with $n \le 40$ that have cop number $2$ are listed in \cite{ball}, although this is not clear from what they write. These graphs are every graph with $k=1$ (this is a theoretical result proven in \cite{ball}), and the graphs with $n=3k$ or $n=4k$ for $k \in \{2,3\}$. We conjecture that these are the only generalized Petersen graphs that have cop number $2$. 

\section{Conclusion}\label{sec: Further research}

In this paper we focused on generalized Peterson graphs of girth 8. Given that such graphs can have a girth $g$ with $3\le g \le 8$, further research on cops and robbers played on generalized Peterson graphs of other girths is still needed.

In the introduction we looked at some of the previous work around cops and robbers on generalized Petersen graphs. Our paper attempts to fill the notable gap in research on the cop numbers of such graphs that have girth 8. More research, however, is indicated for girth 8 and for all of the other possible girths. 

\begin{prob}
For each of the relationships between $n$ and $k$ listed in Corollary~\ref{cor:final}, what is $c(GP(n,k))$?
\end{prob}

As previously stated, it has been shown that the specific relationships between $n$ and $k$ listed in Corollary~\ref{cor:final} do not guarantee a cop number of 4. However, this in itself does not guarantee a cop number of 3, so any specific values for their cop numbers have not yet been proven.

In this study, we have applied a classic version of cops and robber, whereby the players have perfect information. 
However, you could easily take away this perfect information and analyse how this changes the cop number. Different rules can significantly affect game play and outcomes.

\begin{prob}
How is the game of cops and robbers affected (in this context) if the robber doesn't have perfect information?
\end{prob}

In addition to varying rules of play, there are also varying types of graphs upon which the game can be played. We examined only generalized Petersen graphs in this paper; however, many different graph families could be analyzed for their cop numbers. While there has been a great deal of research along these lines, many families remain unexplored. For example, one could look at the cop numbers for a particular family of snark graphs.

\begin{defn}
A snark is a connected, bridgeless, simple, cubic graph whose edge-chromatic number is $4$.
\end{defn}

Since the Petersen graph is a snark, and our structural result applies to cubic graphs, such research  would be closely related to this paper.

\begin{defn}
The flower snarks are an infinite family of snarks introduced by Rufus Isaacs \cite{isaacs}.
\end{defn}

\begin{prob}
What is the cop number of the flower snark $J_n$?
\end{prob}

More research could also be done on cop numbers for I-graphs. The family of I-graphs is a generalization of the family of generalized Petersen graphs, in which the jumps on the ``outer" cycle are also based on a parameter, rather than joining consecutive vertices. They are also cubic graphs that have girth at most $8$ (although they are not necessarily connected). Our results certainly apply to some I-graphs, although we have not investigated this in any detail. However, the upper bound found in \cite{ball} for the cop number of these graphs was $5$ rather than $4$, and nothing in our arguments helps to distinguish whether a cop number is $4$ or $5$ in a cubic graph.
The following question seems quite interesting:

\begin{prob}
When is the cop number of an I graph equal to 5?
\end{prob}

Cops and robbers, in all its variations, is a fun and interesting game to play on different graphs. There is much more research to be done on this topic and many more facets to explore.

\begin{appendix}
\section{Cop numbers, girths, and relationships for generalized Petersen graphs with $n \le 40$}
In these tables, the column ``relation" indicates any relationship(s) between $n$ and $k$ that are relevant to either the isomorphism class, the girth, or the cop number. The column ``iso?" indicates any smaller value of $k$ for which the graph in that row is isomorphic to $GP(n,k)$; it is empty if no such value exists. We also include a column ``source?" to indicate whether the cop number follows from theoretical results (including those in this paper), or has been found by computer. Some small cases  have also been verified by hand, but we list these as ``computer" unless they are part of a broader theoretical result.
\begin{table}[h!]
	\begin{center}
		\caption{Cop numbers, girth, and parameters for $GP(n,k)$, when $n \le 14$.}
		\begin{tabular}{r|r|c|c|r|r|l}
			\mathversion{bold}$n$ & \mathversion{bold}$k$ & \textbf{relation} & \textbf{iso?} & \mathversion{bold}$g$ & \mathversion{bold}$c$ & \textbf{source?} \\
			
			\hline\hline
			5 & 1 & $k=1$&&4&2 & theory, \cite{ball} \\ 
			5 & 2 & $k=2$ && 5 & 3 & theory, Proposition~\ref{prop:k=2}\\ \cline{1-7}
			6 & 1 & $k=1$ && 4&2 & theory, \cite{ball}\\ 
			6 & 2 & $n=3k$ && 3 &2 & computer \\ \cline{1-7}
			7 & 1 & $k=1$ && 4 & 2 & theory, \cite{ball} \\
			7 & 2 & $k=2$ && 5 & 3& theory, Proposition~\ref{prop:k=2} \\
			7 & 3 & $n=2k+1$& $k=2$& 5 & 3& theory, \cite{ball} \\ \cline{1-7}
			8 & 1 & $k=1$ && 4 & 2 & theory, \cite{ball} \\
			8 & 2 & $n=4k$ & & 4 & 2 & computer \\
			8 & 3 & $k=3$ && 6 & 3 & theory, \cite{ball} \\ \cline{1-7}
			9 & 1 & $k=1$ && 4 & 2 & theory, \cite{ball}\\
			9 & 2 & $k=2$ && 5 & 3& theory, Proposition~\ref{prop:k=2}\\
			9 & 3 & $n=3k$ && 3 & 2 & computer\\
			9 & 4 & $n=2k+1$ &$k=2$& 5 & 3 & theory, Proposition~\ref{prop:k=2}\\ \cline{1-7}
			10 & 1 & $k=1$ && 4 & 2 & theory, \cite{ball}\\
			10 & 2 & $k=2$, $n=5k$ && 5 & 3& theory, Proposition~\ref{prop:k=2}\\
			10 & 3 & $k=3$ && 6 & 3 & theory, \cite{ball}\\
			10 & 4 & $n=2k+2$ && 6 & 3 & computer\\ \cline{1-7}
			11 & 1 & $k=1$ && 4 & 2 & theory, \cite{ball}\\
			11 & 2 & $k=2$ && 5 & 3& theory, Proposition~\ref{prop:k=2}\\
			11 & 3 & $k=3$ && 6 & 3 & theory, \cite{ball}\\
			11 & 4 & $k=4$, $n=2k+3$ && 7 & 3 & computer\\ 			
			11 & 5 & $n=2k+1$ &$k=2$& 5 & 3 & theory, Proposition~\ref{prop:k=2}\\ \cline{1-7}			
			12 & 1 & $k=1$ && 4 & 2 & theory, \cite{ball}\\
			12 & 2 & $k=2$ && 5 & 3& theory, Proposition~\ref{prop:k=2}\\
			12 & 3 & $n=4k$ && 4 & 2 & computer\\
			12 & 4 & $n=3k$ && 3 & 3 & computer\\ 			
			12 & 5 & $n=2k+2$ && 6 & 3 & computer\\ \cline{1-7}			
			13 & 1 & $k=1$ && 4 & 2 & theory, \cite{ball}\\
			13 & 2 & $k=2$ && 5 & 3& theory, Proposition~\ref{prop:k=2}\\
			13 & 3 & $k=3$ && 6 & 3 & theory, \cite{ball}\\
			13 & 4 & $n=3k+1$ &$k=3$& 6 & 3 & theory, \cite{ball}\\ 			
			13 & 5 & $n=2k+3$&&7&3&computer\\ 
			13 & 6 & $n=2k+1$&$k=2$&5&3&theory, Proposition~\ref{prop:k=2}\\  \cline{1-7}
			14 & 1 & $k=1$ && 4 & 2 & theory, \cite{ball}\\
			14 & 2 & $k=2$ && 5 & 3& theory, Proposition~\ref{prop:k=2}\\
			14 & 3 & $k=3$ && 6 & 3 & theory, \cite{ball}\\
			14 & 4 & $n=3k+2$, $k=4$ && 7 & 3 & computer\\ 			
			14 & 5 & $n=3k-1$ &$k=3$ & 6 & 3 & theory, \cite{ball}\\ 
			14 & 6 & $n=2k+2$ & & 6 & 3 & computer\\
		\end{tabular}
	\end{center}
\end{table}

\begin{table}[h!]
	\begin{center}
		\caption{Cop numbers, girth, and parameters for $GP(n,k)$, when $15 \le n \le 19$.}
		\begin{tabular}{r|r|c|c|r|r|l}
			\mathversion{bold}$n$ & \mathversion{bold}$k$ & \textbf{relation} & \textbf{iso?} & \mathversion{bold}$g$ & \mathversion{bold}$c$ & \textbf{source?}  \\
			
			\hline \hline
			15 & 1 & $k=1$ && 4 & 2 & theory, \cite{ball}\\
			15 & 2 & $k=2$ && 5 & 3& theory, Proposition~\ref{prop:k=2}\\
			15 & 3 & $n=5k$, $k=3$ && 5 & 3 & theory, \cite{ball}\\
			15 & 4 & $k=4$ && 7 & 3 & computer\\ 			
			15 & 5 & $n=3k$ && 3 & 3 & computer\\ 
			15 & 6 & $n=2k+3$ && 7&3 & computer\\
			15 & 7 & $n=2k+1$ &$k=2$& 5&3 & theory, Proposition~\ref{prop:k=2}\\ \cline{1-7}
			16 & 1 & $k=1$ && 4 & 2 & theory, \cite{ball}\\
			16 & 2 & $k=2$ && 5 & 3& theory, Proposition~\ref{prop:k=2}\\
			16 & 3 & $k=3$ && 6 & 3 & theory, \cite{ball}\\
			16 & 4 & $n=4k$ && 4 & 3 & computer\\ 			
			16 & 5 & $n=3k+1$ &$k=3$& 6 & 3 & theory, \cite{ball}\\ 
			16 & 6 & $n=3k-2$ && 7 & 3 & computer\\ 
			16 & 7 & $n=2k+2$ && 6&3 & computer\\ \cline{1-7}
			17 & 1 & $k=1$ && 4 & 2 & theory, \cite{ball}\\
			17 & 2 & $k=2$ && 5 & 3& theory, Proposition~\ref{prop:k=2}\\
			17 & 3 & $k=3$ && 6 & 3 & theory, \cite{ball}\\
			17 & 4 & $k=4$ && 7 & 3 & computer\\ 			
			17 & 5 & $n=3k+2$ && 7 & 3 & computer \\
			17 & 6 & $n=3k-1$ &$k=3$& 6 & 3 & theory, \cite{ball}\\ 
			17 & 7 & $2n=5k-1$ &$k=5$& 8 & 3 & computer\\ 
			17 & 8 & $n=2k+1$ &$k=2$& 5&3 & theory, Proposition~\ref{prop:k=2}\\ \cline{1-7}
			18 & 1 & $k=1$ && 4 & 2 & theory, \cite{ball}\\
			18 & 2 & $k=2$ && 5 & 3& theory, Proposition~\ref{prop:k=2}\\
			18 & 3 & $k=3$, $n=6k$ && 6 & 3 & theory, \cite{ball}\\
			18 & 4 & $k=4$ && 7 & 3 & computer\\ 			
			18 & 5 & $n=3k+3$, $k=5$&& 8 & 3 & computer \\
			18 & 6 & $n=3k$ && 3 & 3 & computer\\ 
			18 & 7 & $2n=5k+1$ &$k=5$& 8 & 3 & computer \\ 
			18 & 8 & $n=2k+2$ && 6&3 & computer\\ \cline{1-7}
			19 & 1 & $k=1$ && 4 & 2 & theory, \cite{ball}\\
			19 & 2 & $k=2$ && 5 & 3& theory, Proposition~\ref{prop:k=2}\\
			19 & 3 & $k=3$ && 6 & 3 & theory, \cite{ball}\\
			19 & 4 & $k=4$ && 7 & 3 & computer\\ 			
			19 & 5 & $n=4k-1$ & $k=4$ & 7 & 3 & computer \\
			19 & 6 & $n=3k+1$ &$k=3$& 6 & 3 & theory, \cite{ball}\\ 
			19 & 7 & $n=3k-2$ && 7 & 3 & computer\\ 
			19 & 8 & $3n=7k+1$ &$k=7$& 7 & 3 & computer\\ 
			19 & 9 & $n=2k+1$ &$k=2$& 5&3 & theory, Proposition~\ref{prop:k=2}\\ 
		\end{tabular}
	\end{center}
\end{table}

\begin{table}[h!]
	\begin{center}
		\caption{Cop numbers, girth, and parameters for $GP(n,k)$, when $20 \le n \le 23$.}
		\begin{tabular}{r|r|c|c|r|r|l}
			\mathversion{bold}$n$ & \mathversion{bold}$k$ & \textbf{relation} & \textbf{iso?} & \mathversion{bold}$g$ & \mathversion{bold}$c$ & \textbf{source?}  \\
			
			\hline \hline
			20 & 1 & $k=1$ && 4 & 2 & theory, \cite{ball}\\
			20 & 2 & $k=2$ && 5 & 3& theory, Proposition~\ref{prop:k=2}\\
			20 & 3 & $k=3$ && 6 & 3 & theory, \cite{ball}\\
			20 & 4 & $n=5k$ && 5 & 3 & computer\\ 			
			20 & 5 & $n=4k$ & & 4 & 3 & computer \\
			20 & 6 & $n=3k+2$ && 7 & 3 & computer\\
			20 & 7 & $n=3k-1$ &$k=3$& 6 & 3 & theory, \cite{ball}\\ 
			20 & 8 & $n=2k+4$ && 8 & 3 & computer\\
			20 & 9 & $n=2k+2$ && 6&3 & computer\\ \cline{1-7}
			21 & 1 & $k=1$ && 4 & 2 & theory, \cite{ball}\\
			21 & 2 & $k=2$ && 5 & 3& theory, Proposition~\ref{prop:k=2}\\
			21 & 3 & $k=3$ && 6 & 3 & theory, \cite{ball}\\
			21 & 4 & $k=4$ && 7 & 3 & computer\\ 			
			21 & 5 & $n=4k+1$ &$k=4$& 7 & 3 & computer \\
			21 & 6 & $n=3k+3$ && 8 & 3 & computer\\
			21 & 7 & $n=3k$ && 3 & 3 & computer\\ 
			21 & 8 & $n=3k-3$ && 8 & 3 & computer\\
			21 & 9 & $n=2k+3$ && 7&3 & computer\\ 
			21 & 10 & $n=2k+1$ &$k=2$& 5&3 & theory, Proposition~\ref{prop:k=2}\\ \cline{1-7}
			22 & 1 & $k=1$ && 4 & 2 & theory, \cite{ball}\\
			22 & 2 & $k=2$ && 5 & 3& theory, Proposition~\ref{prop:k=2}\\
			22 & 3 & $k=3$ && 6 & 3 & theory, \cite{ball}\\
			22 & 4 & $k=4$ && 7 & 3 & computer\\ 			
			22 & 5 & $n=4k+2$, $k=5$ & & 8 & 3 & computer \\
			22 & 6 & $n=4k-2$ && 8 & 3 & computer\\
			22 & 7 & $n=3k+1$ &$k=3$& 6 & 3 & theory, \cite{ball}\\ 
			22 & 8 & $n=3k-2$ && 7 & 3 & computer\\
			22 & 9 & $2n=5k-1$ &$k=5$& 8&3 & computer\\ 
			22 & 10 & $n=2k+2$ && 6&3 & computer\\ \cline{1-7}
			23 & 1 & $k=1$ && 4 & 2 & theory, \cite{ball}\\
			23 & 2 & $k=2$ && 5 & 3& theory, Proposition~\ref{prop:k=2}\\
			23 & 3 & $k=3$ && 6 & 3 & theory, \cite{ball}\\
			23 & 4 & $k=4$ && 7 & 3 & computer\\ 			
			23 & 5 & $k=5$ & & 8 & 3 & computer \\
			23 & 6 & $n=4k-1$ &$k=4$& 7 & 3 & computer\\
			23 & 7 & $n=3k+2$ && 7 & 3 & computer\\ 
			23 & 8 & $n=3k-1$ &$k=3$& 6 & 3 & theory, \cite{ball}\\
			23 & 9 & $2n=5k+1$ &$k=5$& 8&3 & computer\\ 
			23 & 10 & $3n=7k-1$ &$k=7$& 7&3 & computer\\ 
			23 & 11 & $n=2k+1$ &$k=2$& 5&3 & theory, Proposition~\ref{prop:k=2}\\ 
		\end{tabular}
	\end{center}
\end{table}

\begin{table}[h!]
	\begin{center}
		\caption{Cop numbers, girth, and parameters for $GP(n,k)$, when $24 \le n \le 26$.}
		\begin{tabular}{r|r|c|c|r|r|l}
			\mathversion{bold}$n$ & \mathversion{bold}$k$ & \textbf{relation} & \textbf{iso?} & \mathversion{bold}$g$ & \mathversion{bold}$c$ & \textbf{source?}  \\
			
			\hline \hline
			24 & 1 & $k=1$ && 4 & 2 & theory, \cite{ball}\\
			24 & 2 & $k=2$ && 5 & 3& theory, Proposition~\ref{prop:k=2}\\
			24 & 3 & $k=3$ && 6 & 3 & theory, \cite{ball}\\
			24 & 4 & $n=6k$ && 6 & 3 & computer\\ 			
			24 & 5 & $k=5$ & & 8 & 3 & computer \\
			24 & 6 & $n=4k$ && 4 & 3 & computer\\
			24 & 7 & $n=3k+3$ && 8 & 3 & computer\\ 
			24 & 8 & $n=3k$ && 3 & 3 & computer\\
			24 & 9 & $n=3k-3$ && 8&3 & computer\\ 
			24 & 10 & $n=2k+4$ && 8&3 & computer\\ 
			24 & 11 & $n=2k+2$ && 6&3 & computer\\ \cline{1-7}
			25 & 1 & $k=1$ && 4 & 2 & theory, \cite{ball}\\
			25 & 2 & $k=2$ && 5 & 3& theory, Proposition~\ref{prop:k=2}\\
			25 & 3 & $k=3$ && 6 & 3 & theory, \cite{ball}\\
			25 & 4 & $k=4$ && 7 & 3 & computer\\ 			
			25 & 5 & $n=5k$ & & 5 & 3 & computer \\
			25 & 6 & $n=4k+1$ &$k=4$& 7 & 3 & computer\\
			25 & 7 &  && 8 & 4 & theory, \cite{ball} and Corollary~\ref{cor:final}, see also \cite{burgess}\\ 
			25 & 8 & $n=3k+1$ &$k=3$& 6 & 3 & theory, \cite{ball}\\
			25 & 9 & $n=3k-2$ && 7&3 & computer\\ 
			25 & 10 & $n=5k/2$ && 5&3 & computer\\ 
			25 & 11 & $4n=9k+1$ &$k=9$& 7&3 & computer\\ 
			25 & 12 & $n=2k+1$ &$k=2$& 5&3 & theory, Proposition~\ref{prop:k=2} \\ \cline{1-7} 
			26 & 1 & $k=1$ && 4 & 2 & theory, \cite{ball}\\
			26 & 2 & $k=2$ && 5 & 3& theory, Proposition~\ref{prop:k=2}\\
			26 & 3 & $k=3$ && 6 & 3 & theory, \cite{ball}\\
			26 & 4 & $k=4$ && 7 & 3 & computer\\ 			
			26 & 5 & $k=5$ & & 8 & 3 & computer \\
			26 & 6 & $n=4k+2$ && 8 & 3 & computer\\
			26 & 7 & $n=4k-2$ && 8 & 3 & computer\\ 
			26 & 8 & $n=3k+2$ && 7 & 3 & computer\\
			26 & 9 & $n=3k-1$ &$k=3$& 6&3 & theory, \cite{ball}\\ 
			26 & 10 &  && 8&4 &  theory, \cite{ball} and Corollary~\ref{cor:final}\\ 
			26 & 11 & $3n=7k+1$ &$k=7$& 8&3 & computer\\ 
			26 & 12 & $n=2k+2$ && 6&3 & computer\\
		\end{tabular}
	\end{center}
\end{table}

\begin{table}[h!]
	\begin{center}
		\caption{Cop numbers, girth, and parameters for $GP(n,k)$, when $27 \le n \le 29$.}
		\begin{tabular}{r|r|c|c|r|r|l}
			\mathversion{bold}$n$ & \mathversion{bold}$k$ & \textbf{relation} & \textbf{iso?} & \mathversion{bold}$g$ & \mathversion{bold}$c$ & \textbf{source?}  \\
			
			\hline \hline
			27 & 1 & $k=1$ && 4 & 2 & theory, \cite{ball}\\
			27 & 2 & $k=2$ && 5 & 3& theory, Proposition~\ref{prop:k=2}\\
			27 & 3 & $k=3$ && 6 & 3 & theory, \cite{ball}\\
			27 & 4 & $k=4$ && 7 & 3 & computer\\ 			
			27 & 5 & $k=5$ & & 8 & 3 & computer \\
			27 & 6 &  && 8 & 4 & theory, \cite{ball} and Corollary~\ref{cor:final}\\
			27 & 7 & $n=4k-1$ &$k=4$& 7 & 3 & computer \\ 
			27 & 8 & $n=3k+3$ && 8 & 3 & computer\\
			27 & 9 & $n=3k$ && 3&3 & computer\\ 
			27 & 10 & $3n=8k+1$ &$k=8$& 8&3 & computer\\ 
			27 & 11 & $2n=5k-1$ &$k=5$& 8&3 & computer\\ 
			27 & 12 & $n=2k+3$ && 7&3 & computer\\ 
			27 & 13 & $n=2k+1$ &$k=2$& 5&3 & theory, Proposition~\ref{prop:k=2} \\ \cline{1-7} 
			28 & 1 & $k=1$ && 4 & 2 & theory, \cite{ball}\\
			28 & 2 & $k=2$ && 5 & 3& theory, Proposition~\ref{prop:k=2}\\
			28 & 3 & $k=3$ && 6 & 3 & theory, \cite{ball}\\
			28 & 4 & $k=4$, $n=7k$ && 7 & 3 & computer\\ 			
			28 & 5 & $k=5$ & & 8 & 3 & computer \\
			28 & 6 &  && 8 & 4 &  theory, \cite{ball} and Corollary~\ref{cor:final}\\
			28 & 7 & $n=4k$ && 4 & 3 & computer\\ 
			28 & 8 & $n=7k/2$ && 7 & 4 & computer\\
			28 & 9 & $n=3k+1$ &$k=3$& 6&3 & theory, \cite{ball}\\ 
			28 & 10 & $n=3k-2$ && 7&3 &  computer\\ 
			28 & 11 & $2n=5k+1$ &$k=5$& 8&3 & computer\\ 
			28 & 12 & $n=2k+4$ && 8&3 & computer\\ 
			28 & 13 & $n=2k+2$ && 6&3 & computer\\ \cline{1-7}
			29 & 1 & $k=1$ && 4 & 2 & theory, \cite{ball}\\
			29 & 2 & $k=2$ && 5 & 3& theory, Proposition~\ref{prop:k=2}\\
			29 & 3 & $k=3$ && 6 & 3 & theory, \cite{ball}\\
			29 & 4 & $k=4$ && 7 & 3 & computer\\ 			
			29 & 5 & $k=5$ & & 8 & 3 & computer \\
			29 & 6 & $n=5k-1$ &$k=5$& 8 & 3 & computer\\
			29 & 7 & $n=4k+1$ &$k=4$& 7 & 3 & computer \\ 
			29 & 8 &  && 8 & 4 & theory, \cite{ball} and Corollary~\ref{cor:final}\\
			29 & 9 & $n=3k+2$ && 7&3 & computer\\ 
			29 & 10 & $n=3k-1$ &$k=3$& 6&3 & theory, \cite{ball}\\ 
			29 & 11 & $3n=8k-1$&$k=8$& 8&4 &  theory, \cite{ball} and Corollary~\ref{cor:final}\\ 
			29 & 12 & && 8&4 & theory, \cite{ball} and Corollary~\ref{cor:final}\\ 
			29 & 13 & $n=2k+3$ && 7&3 & computer\\ 
			29 & 14 & $n=2k+1$ &$k=2$& 5&3 & theory, Proposition~\ref{prop:k=2} \\ 
		\end{tabular}
	\end{center}
\end{table}

\begin{table}[h!]
	\begin{center}
		\caption{Cop numbers, girth, and parameters for $GP(n,k)$, when $30 \le n \le 31$.}
		\begin{tabular}{r|r|c|c|r|r|l}
			\mathversion{bold}$n$ & \mathversion{bold}$k$ & \textbf{relation} & \textbf{iso?} & \mathversion{bold}$g$ & \mathversion{bold}$c$ & \textbf{source?}  \\
			
			\hline \hline
			30 & 1 & $k=1$ && 4 & 2 & theory, \cite{ball}\\
			30 & 2 & $k=2$ && 5 & 3& theory, Proposition~\ref{prop:k=2}\\
			30 & 3 & $k=3$ && 6 & 3 & theory, \cite{ball}\\
			30 & 4 & $k=4$ && 7 & 3 & computer\\ 			
			30 & 5 & $n=6k$ & & 6 & 3 & computer \\
			30 & 6 & $n=5k$ && 5 & 3 &  computer\\
			30 & 7 & $n=4k+2$ && 8 & 3 & computer\\ 
			30 & 8 & $n=4k-2$ && 8 & 3 & computer\\
			30 & 9 & $n=3k+3$ && 8&3 & computer\\ 
			30 & 10 & $n=3k$ && 3&3 &  computer\\ 
			30 & 11 & $n=3k-3$ && 8&3 & computer\\ 
			30 & 12 & $n=5k/2$ && 5&3 & computer\\ 
			30 & 13 & $n=2k+4$ && 8&3 & computer\\ 
			30 & 14 & $n=2k+2$ && 6&3 & computer\\ \cline{1-7}
			31 & 1 & $k=1$ && 4 & 2 & theory, \cite{ball}\\
			31 & 2 & $k=2$ && 5 & 3& theory, Proposition~\ref{prop:k=2}\\
			31 & 3 & $k=3$ && 6 & 3 & theory, \cite{ball}\\
			31 & 4 & $k=4$ && 7 & 3 & computer\\ 			
			31 & 5 & $k=5$ & & 8 & 3 & computer \\
			31 & 6 & $n=5k+1$ &$k=5$& 8 & 3 & computer\\
			31 & 7 & && 8 & 4 & theory, \cite{ball} and Corollary~\ref{cor:final} \\ 
			31 & 8 &$n=4k-1$ &$k=4$& 7 & 3 & computer\\
			31 & 9 & $2n=7k-1$ &$k=7$& 8&4 &  theory, \cite{ball} and Corollary~\ref{cor:final}\\ 
			31 & 10 & $n=3k+1$ &$k=3$& 6&3 & theory, \cite{ball}\\ 
			31 & 11 & $n=3k-2$&& 7&3 &  computer\\ 
			31 & 12 & && 8&4 & theory, \cite{ball} and Corollary~\ref{cor:final}\\ 
			31 & 13 & $5n=12k-1$ &$k=12$& 8&4 &  theory, \cite{ball} and Corollary~\ref{cor:final}\\ 
			31 & 14 & $5n=11k+1$ &$k=11$& 7&3 & computer\\ 
			31 & 15 & $n=2k+1$ &$k=2$& 5&3 & theory, Proposition~\ref{prop:k=2} \\ 
		\end{tabular}
	\end{center}
\end{table}

\begin{table}[h!]
	\begin{center}
		\caption{Cop numbers, girth, and parameters for $GP(n,k)$, when $32 \le n \le 33$.}
		\begin{tabular}{r|r|c|c|r|r|l}
			\mathversion{bold}$n$ & \mathversion{bold}$k$ & \textbf{relation} & \textbf{iso?} & \mathversion{bold}$g$ & \mathversion{bold}$c$ & \textbf{source?}  \\
			
			\hline \hline
			32 & 1 & $k=1$ && 4 & 2 & theory, \cite{ball}\\
			32 & 2 & $k=2$ && 5 & 3& theory, Proposition~\ref{prop:k=2}\\
			32 & 3 & $k=3$ && 6 & 3 & theory, \cite{ball}\\
			32 & 4 & $k=4$ && 7 & 3 & computer\\ 			
			32 & 5 & $k=5$ & & 8 & 3 & computer \\
			32 & 6 &  && 8 & 4 &  theory, \cite{ball} and Corollary~\ref{cor:final}\\
			32 & 7 &  && 8 & 4 & theory, \cite{ball} and Corollary~\ref{cor:final}\\ 
			32 & 8 & $n=4k$ && 4 & 3 & computer\\
			32 & 9 & $2n=7k+1$ &$k=7$& 8&4 & theory, \cite{ball} and Corollary~\ref{cor:final}\\ 
			32 & 10 & $n=3k+2$ && 7&3 &  computer\\ 
			32 & 11 & $n=3k-1$ &$k=3$& 6&3 & theory, \cite{ball}\\ 
			32 & 12 &  && 8&4 & theory, \cite{ball} and Corollary~\ref{cor:final}\\ 
			32 & 13 & $2n=5k-1$ &$k=5$& 8&3 & computer\\ 
			32 & 14 & $n=2k+4$ && 8&3 & computer\\ 
			32 & 15 & $n=2k+2$ && 6&3 & computer\\ \cline{1-7}
			33 & 1 & $k=1$ && 4 & 2 & theory, \cite{ball}\\
			33 & 2 & $k=2$ && 5 & 3& theory, Proposition~\ref{prop:k=2}\\
			33 & 3 & $k=3$ && 6 & 3 & theory, \cite{ball}\\
			33 & 4 & $k=4$ && 7 & 3 & computer\\ 			
			33 & 5 & $k=5$ & & 8 & 3 & computer \\
			33 & 6 & && 8 & 4 &  theory, \cite{ball} and Corollary~\ref{cor:final}\\
			33 & 7 & && 8 & 4 & theory, \cite{ball} and Corollary~\ref{cor:final} \\ 
			33 & 8 &$n=4k+1$ &$k=4$& 7 & 3 & computer\\
			33 & 9 &  && 8&4 &  theory, \cite{ball} and Corollary~\ref{cor:final}\\ 
			33 & 10 & $n=3k+3$ && 8&3 & computer\\ 
			33 & 11 & $n=3k$&& 3&3 &  computer\\ 
			33 & 12 & $n=3k-3$ && 8&3 & computer\\ 
			33 & 13 & $2n=5k+1$ &$k=5$& 8&3 &  computer\\ 
			33 & 14 & $3n=7k+1$ &$k=7$& 8&4 &  theory, \cite{ball} and Corollary~\ref{cor:final}\\ 
			33 & 15 & $n=2k+3$ && 7&3 & computer\\ 
			33 & 16 & $n=2k+1$ &$k=2$& 5&3 & theory, Proposition~\ref{prop:k=2} \\ 
		\end{tabular}
	\end{center}
\end{table}

\begin{table}[h!]
	\begin{center}
		\caption{Cop numbers, girth, and parameters for $GP(n,k)$, when $34 \le n \le 35$.}
		\begin{tabular}{r|r|c|c|r|r|l}
			\mathversion{bold}$n$ & \mathversion{bold}$k$ & \textbf{relation} & \textbf{iso?} & \mathversion{bold}$g$ & \mathversion{bold}$c$ & \textbf{source?}  \\
			
			\hline \hline
			34 & 1 & $k=1$ && 4 & 2 & theory, \cite{ball}\\
			34 & 2 & $k=2$ && 5 & 3& theory, Proposition~\ref{prop:k=2}\\
			34 & 3 & $k=3$ && 6 & 3 & theory, \cite{ball}\\
			34 & 4 & $k=4$ && 7 & 3 & computer\\ 			
			34 & 5 & $k=5$ & & 8 & 3 & computer \\
			34 & 6 &  && 8 & 4 &  theory, \cite{ball} and Corollary~\ref{cor:final}\\
			34 & 7 & $n=5k-1$ &$k=5$& 8 & 3 & computer\\ 
			34 & 8 & $n=4k+2$ && 8 & 3 & computer\\
			34 & 9 & $n=4k-2$ && 8&3 & computer\\ 
			34 & 10 &  && 8&4 &   theory, \cite{ball} and Corollary~\ref{cor:final}\\ 
			34 & 11 & $n=3k+1$ &$k=3$& 6&3 & theory, \cite{ball}\\ 
			34 & 12 & $n=3k-2$ && 7&3 & computer\\ 
			34 & 13 &  && 8&4 &  theory, \cite{ball} and Corollary~\ref{cor:final}\\ 
			34 & 14 &  && 8&4 &  theory, \cite{ball} and Corollary~\ref{cor:final}\\ 
			34 & 15 & $4n=9k+1$ &$k=9$& 8&3 & computer\\ 
			34 & 16 & $n=2k+2$ && 6&3 & computer\\ \cline{1-7}
			35 & 1 & $k=1$ && 4 & 2 & theory, \cite{ball}\\
			35 & 2 & $k=2$ && 5 & 3& theory, Proposition~\ref{prop:k=2}\\
			35 & 3 & $k=3$ && 6 & 3 & theory, \cite{ball}\\
			35 & 4 & $k=4$ && 7 & 3 & computer\\ 			
			35 & 5 & $n=7k$ & & 7 & 3 & computer \\
			35 & 6 & && 8 & 4 &  theory, \cite{ball} and Corollary~\ref{cor:final}\\
			35 & 7 &$n=5k$ && 5 & 3 & computer \\ 
			35 & 8 & && 8 & 4 &  theory, \cite{ball} and Corollary~\ref{cor:final}\\
			35 & 9 & $n=4k-1$ &$k=4$& 7&3 & computer\\ 
			35 & 10 & $n=7k/2$ && 7&4 & computer\\ 
			35 & 11 & $n=3k+2$&& 7&3 &  computer\\ 
			35 & 12 & $n=3k-1$ &$k=3$& 6&3 & theory, \cite{ball}\\ 
			35 & 13 &$3n=8k+1$ &$k=8$& 8&4 &   theory, \cite{ball} and Corollary~\ref{cor:final}\\ 
			35 & 14 & $n=5k/2$ && 5&3 &  computer\\ 
			35 & 15 & $n=7k/3$ && 7&4 & computer\\ 
			35 & 16 & $5n=11k-1$ &$k=11$& 7&3 & computer\\ 
			35 & 17 & $n=2k+1$ &$k=2$& 5&3 & theory, Proposition~\ref{prop:k=2} \\ 
		\end{tabular}
	\end{center}
\end{table}

\begin{table}[h!]
	\begin{center}
		\caption{Cop numbers, girth, and parameters for $GP(n,k)$, when $36 \le n \le 37$.}
		\begin{tabular}{r|r|c|c|r|r|l}
			\mathversion{bold}$n$ & \mathversion{bold}$k$ & \textbf{relation} & \textbf{iso?} & \mathversion{bold}$g$ & \mathversion{bold}$c$ & \textbf{source?}  \\
			
			\hline \hline
			36 & 1 & $k=1$ && 4 & 2 & theory, \cite{ball}\\
			36 & 2 & $k=2$ && 5 & 3& theory, Proposition~\ref{prop:k=2}\\
			36 & 3 & $k=3$ && 6 & 3 & theory, \cite{ball}\\
			36 & 4 & $k=4$ && 7 & 3 & computer\\ 			
			36 & 5 & $k=5$ & & 8 & 3 & computer \\
			36 & 6 & $n=6k$ && 6 & 3 &  computer\\
			36 & 7 & $n=5k+1$ &$k=5$& 8 & 3 & computer\\ 
			36 & 8 &  && 8 & 4 &  theory, \cite{ball} and Corollary~\ref{cor:final}\\
			36 & 9 & $n=4k$ && 4&3 & computer\\ 
			36 & 10 &  && 8&4 &   theory, \cite{ball} and Corollary~\ref{cor:final}\\ 
			36 & 11 & $n=3k+3$ && 8&3 & computer\\ 
			36 & 12 & $n=3k$ && 3&3 & computer\\ 
			36 & 13 & $4n=11k+1$ &$k=11$& 8&3 & computer\\ 
			36 & 14 &  && 8&4 &  theory, \cite{ball} and Corollary~\ref{cor:final}\\ 
			36 & 15 &  && 8&4 &  theory, \cite{ball} and Corollary~\ref{cor:final}\\ 
			36 & 16 & $n=2k+4$ && 8&3 & computer\\ 
			36 & 17 & $n=2k+2$ && 6&3 & computer\\ \cline{1-7}
			37 & 1 & $k=1$ && 4 & 2 & theory, \cite{ball}\\
			37 & 2 & $k=2$ && 5 & 3& theory, Proposition~\ref{prop:k=2}\\
			37 & 3 & $k=3$ && 6 & 3 & theory, \cite{ball}\\
			37 & 4 & $k=4$ && 7 & 3 & computer\\ 			
			37 & 5 & $k=5$ & & 8 & 3 & computer \\
			37 & 6 & && 8 & 4 &  theory, \cite{ball} and Corollary~\ref{cor:final}\\
			37 & 7 & && 8 & 4 &  theory, \cite{ball} and Corollary~\ref{cor:final} \\ 
			37 & 8 & && 8 & 4 &  theory, \cite{ball} and Corollary~\ref{cor:final}\\
			37 & 9 & $n=4k+1$ &$k=4$& 7&3 & computer\\ 
			37 & 10 &  && 8&4 &  theory, \cite{ball} and Corollary~\ref{cor:final}\\ 
			37 & 11 &$3n=10k+1$ &$k=10$& 8&4 &   theory, \cite{ball} and Corollary~\ref{cor:final}\\ 
			37 & 12 & $n=3k+1$ &$k=3$& 6&3 & theory, \cite{ball}\\ 
			37 & 13 &$n=3k-2$ && 7&3 &   computer\\ 
			37 & 14 & $3n=8k-1$ &$k=8$& 8&4 &   theory, \cite{ball} and Corollary~\ref{cor:final}\\ 
			37 & 15 & $2n=5k-1$ &$k=5$& 8&3 & computer\\ 
			37 & 16 & $3n=7k-1$ &$k=7$& 8&4 &  theory, \cite{ball} and Corollary~\ref{cor:final}\\ 
			37 & 17 & $6n=13k+1$ &$k=13$& 7&3 & computer\\ 
			37 & 18 & $n=2k+1$ &$k=2$& 5&3 & theory, Proposition~\ref{prop:k=2} \\ 
		\end{tabular}
	\end{center}
\end{table}

\begin{table}[h!]
	\begin{center}
		\caption{Cop numbers, girth, and parameters for $GP(n,k)$, when $38 \le n \le 39$.}
		\begin{tabular}{r|r|c|c|r|r|l}
			\mathversion{bold}$n$ & \mathversion{bold}$k$ & \textbf{relation} & \textbf{iso?} & \mathversion{bold}$g$ & \mathversion{bold}$c$ & \textbf{source?}  \\
			
			\hline \hline
			38 & 1 & $k=1$ && 4 & 2 & theory, \cite{ball}\\
			38 & 2 & $k=2$ && 5 & 3& theory, Proposition~\ref{prop:k=2}\\
			38 & 3 & $k=3$ && 6 & 3 & theory, \cite{ball}\\
			38 & 4 & $k=4$ && 7 & 3 & computer\\ 			
			38 & 5 & $k=5$ & & 8 & 3 & computer \\
			38 & 6 &  && 8 & 4 &   theory, \cite{ball} and Corollary~\ref{cor:final}\\
			38 & 7 &  && 8 & 4 &  theory, \cite{ball} and Corollary~\ref{cor:final}\\ 
			38 & 8 &  && 8 & 4 &  theory, \cite{ball} and Corollary~\ref{cor:final}\\
			38 & 9 & $n=4k+2$ && 8&3 & computer\\ 
			38 & 10 &$n=4k-2$  && 8&3 & computer\\ 
			38 & 11 & $2n=7k-1$ &$k=7$& 8&4 &  theory, \cite{ball} and Corollary~\ref{cor:final}\\ 
			38 & 12 & $n=3k+2$ && 7&3 & computer\\ 
			38 & 13 & $n=3k-1$ &$k=3$& 6&3 & theory, \cite{ball}\\ 
			38 & 14 &  && 8&4 &  theory, \cite{ball} and Corollary~\ref{cor:final}\\ 
			38 & 15 & $2n=5k+1$ &$k=5$& 8&3 & computer\\ 
			38 & 16 &  && 8&4 &  theory, \cite{ball} and Corollary~\ref{cor:final}\\ 
			38 & 17 & $4n=9k-1$ &$k=9$& 8&3 & computer\\ 
			38 & 18 & $n=2k+2$ && 6&3 & computer\\ \cline{1-7}
			39 & 1 & $k=1$ && 4 & 2 & theory, \cite{ball}\\
			39 & 2 & $k=2$ && 5 & 3& theory, Proposition~\ref{prop:k=2}\\
			39 & 3 & $k=3$ && 6 & 3 & theory, \cite{ball}\\
			39 & 4 & $k=4$ && 7 & 3 & computer\\ 			
			39 & 5 & $k=5$ & & 8 & 3 & computer \\
			39 & 6 & && 8 & 4 &  theory, \cite{ball} and Corollary~\ref{cor:final}\\
			39 & 7 & && 8 & 4 &  theory, \cite{ball} and Corollary~\ref{cor:final} \\ 
			39 & 8 &$n=5k-1$ &$k=5$& 8 & 3 &  computer\\
			39 & 9 &  && 8&4 &  theory, \cite{ball} and Corollary~\ref{cor:final}\\ 
			39 & 10 & $n=4k-1$ &$k=4$& 7&3 & computer\\ 
			39 & 11 &$2n=7k+1$ &$k=7$& 8&4 &   theory, \cite{ball} and Corollary~\ref{cor:final}\\ 
			39 & 12 & $n=3k+3$ && 8&3 & computer\\ 
			39 & 13 &$n=3k$ && 3&3 &   computer\\ 
			39 & 14 & $n=3k-3$ && 8&3 &   computer\\ 
			39 & 15 &  && 8&4 &  theory, \cite{ball} and Corollary~\ref{cor:final}\\ 
			39 & 16 &  && 8&4 &  theory, \cite{ball} and Corollary~\ref{cor:final}\\ 
			39 & 17 & $7n=16k+1$ &$k=16$& 8&4 &  theory, \cite{ball} and Corollary~\ref{cor:final}\\ 
			39 & 18 & $n=2k+3$ && 7&3 & computer\\ 
			39 & 19 & $n=2k+1$ &$k=2$& 5&3 & theory, Proposition~\ref{prop:k=2} \\ 
		\end{tabular}
	\end{center}
\end{table}

\begin{table}[h!]
	\begin{center}
		\caption{Cop numbers, girth, and parameters for $GP(n,k)$, when $n=40$.}
		\begin{tabular}{r|r|c|c|r|r|l}
			\mathversion{bold}$n$ & \mathversion{bold}$k$ & \textbf{relation} & \textbf{iso?} & \mathversion{bold}$g$ & \mathversion{bold}$c$ & \textbf{source?}  \\
			
			\hline \hline
			40 & 1 & $k=1$ && 4 & 2 & theory, \cite{ball}\\
			40 & 2 & $k=2$ && 5 & 3& theory, Proposition~\ref{prop:k=2}\\
			40 & 3 & $k=3$ && 6 & 3 & theory, \cite{ball}\\
			40 & 4 & $k=4$ && 7 & 3 & computer\\ 			
			40 & 5 & $k=5$ & & 8 & 3 & computer \\
			40 & 6 &  && 8 & 4 &   theory, \cite{ball} and Corollary~\ref{cor:final}\\
			40 & 7 &  && 8 & 4 &  theory, \cite{ball} and Corollary~\ref{cor:final}\\ 
			40 & 8 & $n=5k$ && 5 & 3 &  computer\\
			40 & 9 &  && 8&4 & theory, \cite{ball} and Corollary~\ref{cor:final}\\ 
			40 & 10 &$n=4k$  && 4&3 & computer\\ 
			40 & 11 &  && 8&4 &  theory, \cite{ball} and Corollary~\ref{cor:final}\\ 
			40 & 12 &  && 8&4 & theory, \cite{ball} and Corollary~\ref{cor:final}\\ 
			40 & 13 & $n=3k+1$ &$k=3$& 6&3 & theory, \cite{ball}\\ 
			40 & 14 & $n=3k-2$ && 7&3 &  computer\\ 
			40 & 15 &  && 8&4 & theory, \cite{ball} and Corollary~\ref{cor:final}\\ 
			40 & 16 &$n=5k/2$  && 5&3 &  computer\\ 
			40 & 17 & $3n=7k+1$ &$k=7$& 8&4 & theory, \cite{ball} and Corollary~\ref{cor:final}\\ 
			40 & 18 & $n=2k+4$ && 8&3 & computer\\ 
			40 & 19 & $n=2k+2$ && 6&3 & computer\\ 
		\end{tabular}
	\end{center}
\end{table}

\end{appendix}

\end{document}